\renewcommand{\sl}{sl}
\newcommand{\so}{o}
\newcommand{\T}{\mathbb T}
\newcommand{\Tg}{\mathbb T_{\mathfrak g}}
\renewcommand{\sp}{sp}
\renewcommand{\gg}{\mathfrak{g}}
\newcommand{\hh}{\mathfrak{h}}
\newcommand{\bb}{\mathfrak{b}}
\newcommand{\nn}{\mathfrak{n}}
\renewcommand{\dim}{\operatorname{dim}}
\newcommand{\CC}{\mathbb{C}}
\newcommand{\ZZ}{\mathbb{Z}}
\newcommand{\limarr}{\underrightarrow{\mathrm{lim}} \;}
\newcommand{\Aut}{\operatorname{Aut}}
\newcommand{\Ann}{\operatorname{Ann}}
\newcommand{\soc}{\operatorname{soc}}
\newcommand{\rad}{\operatorname{rad}}
\newcommand{\Hom}{\operatorname{Hom}}
\newtheorem{thm}{Theorem}[section]
\newtheorem{lemma}[thm]{Lemma}
\newtheorem{prop}[thm]{Proposition}
\newtheorem{cor}[thm]{Corollary}
\theoremstyle{definition}
\newtheorem{defn}[thm]{Definition}
\newtheorem{rem}[thm]{Remark}
\numberwithin{equation}{section}
\author{Elizabeth Dan-Cohen}
\address{Jacobs University Bremen, Campus Ring 1, 28759 Bremen, Germany}
\email{elizabeth.dancohen@gmail.com}
\author{Ivan Penkov}
\address{Jacobs University Bremen, Campus Ring 1, 28759 Bremen, Germany}
\email{i.penkov@jacobs-university.de}
\author{Vera Serganova}
\address{Department of Mathematics, University of California Berkeley, Berkeley CA 94720, USA}
\email{serganov@math.berkeley.edu}
\title{A Koszul category of representations of finitary Lie algebras}
\subjclass[2000]{17B65, 17B10, 16G10}
\date{May 10, 2012}
\keywords{Koszul duality, finitary Lie algebras}
\begin{document}

\begin{abstract}
We find for each simple finitary Lie algebra $\gg$ a category $\Tg$ of integrable modules in which the tensor product of copies of the natural and conatural modules are injective.  The objects in $\Tg$ can be defined as the finite length absolute weight modules, where by absolute weight module we mean a module which is a weight module for every splitting Cartan subalgebra of $\gg$.  The category $\Tg$ is Koszul in the sense that it is antiequivalent to the category of locally unitary finite-dimensional modules over a certain direct limit of finite-dimensional Koszul algebras.  We describe these finite-dimensional algebras explicitly.  We also prove an equivalence of the categories $\T_{\so(\infty)}$ and $\T_{\sp(\infty)}$ corresponding respectively to the orthogonal and symplectic finitary Lie algebras $\so(\infty)$, $\sp(\infty)$.
\end{abstract}

\maketitle

\section{Introduction}

The classical simple complex Lie algebras $\sl(n)$, $\so(n)$, $\sp(2n)$ have several natural infinite-dimensional versions. In this paper we concentrate on the ``smallest possible" such versions: the direct limit Lie algebras $\sl(\infty):=\limarr (\sl(n))_{n\in\mathbb{Z}_{>2}}$, $\so(\infty):=\limarr ( \so(n))_{n\in\mathbb{Z}_{\geq 3}}$, $\sp(\infty):=\limarr (\sp(2n))_{n\in\mathbb{Z}_{\geq 2}}$. 
 From a traditional finite-dimensional point of view, these Lie algebras are a suitable language for various stabilization phenomena, for instance stable branching laws as studied by R.~Howe, E.-C.~Tan and J.~Willenbring \cite{HTW}. The direct limit Lie algebras $\sl(\infty)$, $\so(\infty)$, $\sp(\infty)$ admit many characterizations: for instance, they represent (up to isomorphism) the simple finitary (locally finite) complex Lie algebras \cite{B, BSt}.  Alternatively, these Lie algebras are the only three locally simple locally finite complex Lie algebras which admit a root decomposition \cite{PStr}.

Several attempts have been made to build a basic representation
theory for $\gg=\sl(\infty)$, $\so(\infty)$, $\sp(\infty)$. 
As the only simple finite-dimensional representation of $\gg$ is the
trivial one, one has to study infinite-dimensional
representations. On the other hand, it is still possible to study
representations which are close analogs of finite-dimensional
representations.
Such a representation should certainly be integrable, i.e.\ it should
be isomorphic to a direct sum of finite-dimensional representations
when restricted to any simple finite-dimensional subalgebra.

The first phenomenon one encounters when studying integrable
representations of $\gg$ is that they are not in general semisimple.
This phenomenon has been studied in \cite{PStyr} and
\cite{PS}, but it had not previously been placed within a known more general framework for non-semisimple categories. 
The main purpose of the present paper is to show that the notion of Koszulity
for a category of modules over a graded ring, as defined by
A. Beilinson, V. Ginzburg and W. Soergel in \cite{BGS}, provides an 
excellent tool for the study of integrable representations of  $\gg=\sl(\infty)$, $\so(\infty)$, $\sp(\infty)$.

In this paper
we introduce the category $\Tg$ of 
tensor $\gg$-modules. The objects of   $\Tg$ are defined at first by the equivalent abstract conditions of Theorem \ref{th1}. Later we show in Corollary
\ref{newlab} that the objects of $\Tg$ are nothing but finite length
submodules of a direct sum of several copies of the tensor algebra $T$
of the natural and conatural representations. In the
finite-dimensional case, i.e. for $\sl(n)$, $\so(n)$, or $\sp(2n)$, the appropriate 
 tensor algebra is a
cornerstone of the theory of finite-dimensional representations (Schur-Weyl duality, etc.).
  In the infinite-dimensional case, the tensor algebra $T$ was studied by Penkov and
K.~Styrkas in \cite{PStyr}; nevertheless its indecomposable direct
summands were not understood until now from a categorical point of
view. 

We prove that these indecomposable modules 
 are precisely
the indecomposable injectives in the category $\Tg$.  Furthermore, the category $\Tg$ is Koszul in the following sense: $\Tg$ is
antiequivalent to the category of locally unitary finite-dimensional
modules over an algebra $\mathcal{A}_\gg$ which is a direct limit of
finite-dimensional Koszul algebras (see Proposition \ref{equiv} and Theorem
\ref{koszulity}). 

Moreover, we prove in Corollary \ref{selfduality} that for $\gg=\sl(\infty)$ 
the Koszul dual algebra $(\mathcal{A}_\gg^!)^\mathrm{opp}$ is 
isomorphic to $\mathcal{A}_\gg$. This together with the main result of \cite{PStyr} allows us to give
an explicit formula for the Ext group between any two simple objects of $\Tg$ when
$\gg=\sl(\infty)$.
For the cases of  $\gg=\so(\infty)$ and $\gg=\sp(\infty)$ we discover
another interesting fact:
the algebras $\mathcal{A}_{\so(\infty)}$ and
$\mathcal{A}_{\sp(\infty)}$ are isomorphic. This yields an 
equivalence of categories $\T_{\so(\infty)}\simeq \T_{\sp(\infty)}$, which is
Corollary \ref{isomosp}. 

In summary, the results of the present paper show how the
non-semisimplicity of tensor 
modules arising from the limit process $n\rightarrow\infty$ falls strikingly into the general Koszul pattern discovered by Beilinson, Ginzburg and Soergel.  This enables us to uncover the structure of the category of tensor representations of $\gg$.

As a last remark, let us point out that the categories $\Tg$ for $\gg= \sl(\infty)$, $\so(\infty)$, and
$\sp(\infty)$ will likely prove useful for the categorification
of important classical theories, for instance the
boson-fermion correspondence. 

\subsection*{Acknowledgements}
We thank Igor Frenkel for his active and supportive interest in our
work, and Alexandru Chirvasitu for pointing out an inaccuracy in the
previous version of the paper.
All three authors gratefully acknowledge funding through DFG grants PE 980/2-1 and PE 980/3-1 (DFG SPP1388).  Vera Serganova acknowledges partial support through NSF grant 0901554.

\section{Preliminaries} \label{prelim}

The ground field is $\CC$.   By $S_n$ we denote the $n$-th symmetric group, and by $\CC[S_n]$ its group algebra.  The sign $\otimes$ stands for $\otimes_\CC$.  We denote by $(\, \cdot \,)^*$ the algebraic dual, i.e.\ $\Hom_{\CC} (\, \cdot \,, \CC)$.

Let $\gg$ be one of the infinite-dimensional simple finitary Lie algebras, $\sl(\infty)$, $\so(\infty)$, or $\sp(\infty)$.  Here $\sl(\infty)=\limarr \sl(n)$, $\so(\infty)=\limarr \so(n)$, $\sp(\infty)=\limarr \sp(2n)$, where in each direct limit the inclusions can be chosen as ``left upper corner" inclusions.  We consider the ``exhaustion" $\gg = \limarr \gg_n$ to be fixed, taking $\gg_n = \sl(n)$ for $\gg = \sl(\infty)$, $\gg_n = \so(2n)$ or $\gg_n = \so(2n+1)$ for $\gg = \so(\infty)$, and $\gg_n = \sp(2n)$ for $\gg = \sp(\infty)$.  By $G_n$ we denote the adjoint group of $\gg_n$  It is clear that $\{ G_n \}$ forms a direct system and defines an ind-group $G = \limarr G_n$.  As mentioned in the introduction, the Lie algebras $\sl(\infty)$, $\so(\infty)$, and $\sp(\infty)$ admit several equivalent intrinsic descriptions, see for instance \cite{B, BSt, PStr}.  

It is clear from the definition of $\gg = \sl(\infty)$, $\so(\infty)$, $\sp(\infty)$ that the notions of semisimple or nilpotent elements make sense: an element $g \in \gg$ is \emph{semisimple} (respectively, \emph{nilpotent}) if $g$ is semisimple (resp., nilpotent) as an element of $\gg_n$ for some $n$.  In \cite{NP, DPS}, Cartan subalgebras of $\gg$ have been studied.  In the present paper we need only the notion of a \emph{splitting Cartan subalgebra} of $\gg$: this is a maximal toral (where \emph{toral} means consisting of semisimple elements) subalgebra $\hh \subset \gg$ such that $\gg$ is an $\hh$-weight module, i.e.
$$\gg = \bigoplus_{\alpha \in \hh^*} \gg^\alpha,$$
where $\gg^\alpha = \{ g \in \gg \mid [h,g] = \alpha(h) g \textrm{ for all } h \in \hh \}$.  The set $\Delta := \{ \alpha \in \hh^* \setminus \{ 0 \} \mid \gg^\alpha \neq 0 \}$ is the \emph{set of $\hh$-roots} of $\gg$.  More generally, if $\hh$ is a splitting Cartan subalgebra of $\gg$ and $M$ is a $\gg$-module, $M$ is an \emph{$\hh$-weight module} if
$$M = \bigoplus_{\alpha \in \hh^*} M^\alpha,$$
where $M^\alpha := \{ m \in M \mid h \cdot m = \alpha(h)m \textrm{ for all } h \in \hh \}$.

By $V$ we denote the natural representation of $\gg$; that is, $V = \limarr V_n$, where $V_n$ is the natural representation of $\gg_n$.  We set also $V_* = \limarr V_n^*$; this is the conatural representation of $\gg$.  For $\gg = \so(\infty)$, $\sp(\infty)$, $V \simeq V_*$, whereas $V \not\simeq V_*$ for $\gg = \sl(\infty)$.  Note that $V_*$ is a submodule of the algebraic dual $V^* = \operatorname{Hom}_\CC (V, \CC)$ of $V$.  Moreover, $\gg \subset V \otimes V_*$, and $\sl(\infty)$ can be identified with the kernel of the contraction $\phi : V \otimes V_* \rightarrow \CC$, while
\begin{align*}
\gg & \simeq \Lambda^2 (V) \subset V \otimes V = V \otimes V_* & \textrm{for } \gg = \so(\infty), \\
\gg & \simeq S^2 (V) \subset V \otimes V = V \otimes V_* & \textrm{for } \gg = \sp(\infty).
\end{align*} 

Let $\tilde G$ be the subgroup of $\Aut V$ consisting of those
automorphisms for which the induced automorphism of $V^*$ restricts to
an automorphism of $V_*$.  Then clearly $G \subset \tilde{G} \subset
\Aut \gg$, and moreover $\tilde G = \Aut \gg$ for $\gg = \so(\infty)$,
$\sp(\infty)$ \cite[Corollary 1.6 (b)]{BBCM}.
For $\gg = \sl(\infty)$, the group $\tilde G$ has index $2$ in $\Aut \gg$: the quotient $\Aut \gg / \tilde G$ is represented by the automorphism
\begin{equation*}
g \mapsto -g^t
\end{equation*}
for $g \in \sl(\infty)$ \cite[Corollary 1.2 (a)]{BBCM}.

It is essential to recall that if $\gg = \sl(\infty)$, $\sp(\infty)$, all splitting Cartan subalgebras of $\gg$ are $\tilde{G}$-conjugate, while there are two $\tilde{G}$-conjugacy classes for $\gg = \so(\infty)$.  One class comes from the exhaustion of $\so(\infty)$ as $\limarr \so(2n)$, and the other from the exhaustion of the form $\limarr \so(2n+1)$.  For further details we refer the reader to \cite{DPS}.  Here are the explicit forms of the root systems of $\gg$:
\begin{align*}
&\{ \epsilon_i - \epsilon_j \mid i \neq j \in \ZZ_{>0} \} &\textrm{for } \gg = \sl(\infty) \textrm{, } \gg_n = \sl(n), \\
&\{ \pm \epsilon_i \pm \epsilon_j \mid i \neq j \in \ZZ_{>0} \} \cup \{ \pm 2 \epsilon_i \mid i \in \ZZ_{>0} \} &\textrm{for } \gg = \sp(\infty)  \textrm{, } \gg_n = \sp(2n), \\
&\{ \pm \epsilon_i \pm \epsilon_j \mid i \neq j \in \ZZ_{>0} \} &\textrm{for } \gg =  \so(\infty) \textrm{, } \gg_n= \so(2n), \\
&\{ \pm \epsilon_i \pm \epsilon_j \mid i \neq j \in \ZZ_{>0} \}  \cup \{ \pm \epsilon_i \mid i \in \ZZ_{>0} \} &\textrm{for } \gg = \so(\infty)  \textrm{, } \gg_n = \so(2n+1). 
\end{align*}
Our usage of $\epsilon_i \in \hh^*$ is compatible with the standard usage of $\epsilon_i$ as a linear function on $\hh \cap \gg_n$ for all $n > i$.

In the present paper we study integrable $\gg$-modules $M$ for $\gg\simeq \sl(\infty)$,
$\so(\infty)$, $\sp(\infty)$. By definition, a $\gg$-module $M$ is
\emph{integrable} if $\dim \{m, g\cdot m, g^2\cdot m, ...\}<\infty$
for all $g\in\gg$, $m \in M$. 
More generally, if $M$ is any $\gg$-module, the set $\gg [ M ]$ of $M$-locally finite elements in $\gg$, that is
$$\gg [ M ] := \{ g \in \gg \mid \dim \{ m , g \cdot m, g^2 \cdot m , \ldots \} < \infty \textrm{ for all } m \in M \},$$
is a Lie subalgebra of $\gg$.  This follows from the analogous fact for finite-dimensional Lie algebras, discovered and rediscovered by several mathematicians \cite{GQS, F, K}.  We refer to $\gg [ M ]$ as the \emph{Fernando-Kac subalgebra} of $M$.

By $\gg$-mod we denote the category of all $\gg$-modules, and  following the notation of \cite{PS}, we let $\operatorname{Int}_\gg$ denote the category of integrable $\gg$-modules.  For a fixed splitting Cartan subalgebra $\hh \subset \gg$, denote by $\operatorname{Int}^\mathrm{wt}_{\gg,\hh}$ the full subcategory of $\operatorname{Int}_\gg$ whose objects are $\hh$-weight modules.  One has the functor
$$\Gamma_\gg : \gg\textrm{-mod} \rightarrow \operatorname{Int}_\gg$$
which takes an arbitrary $\gg$-module to its largest integrable submodule,
as well as the functor
$$\Gamma^\mathrm{wt}_\hh : \operatorname{Int}_\gg \rightarrow \operatorname{Int}^\mathrm{wt}_{\gg,\hh}$$ 
which takes an integrable module $M$ to its largest $\hh$-weight submodule.

\section{The category $\Tg$} \label{catsec}

If $\gamma \in \Aut \gg$ and $M$ is a $\gg$-module, let $M^\gamma$ denote the $\gg$-module twisted by $\gamma$: that is, $M^\gamma$ is equal to $M$ as a vector space, and the $\gg$-module structure on $M^\gamma$ is given by $\gamma(g)\cdot m$ for $m \in M^\gamma$ and $g \in \gg$.

\begin{defn} 
\begin{enumerate}
\item A $\gg$-module $M$ is called an \emph{absolute weight module} if $M$ is an $\hh$-weight module for every splitting Cartan subalgebra $\hh\subset\gg$.
\item A $\gg$-module $M$ is called \emph{$\tilde{G}$-invariant} if for any $\gamma\in \tilde{G}$ there is a $\gg$-isomorphism $M^\gamma\simeq M$.
\item A subalgebra of $\gg$ is called \emph{finite corank} if it contains the commutator subalgebra of the centralizer of some finite-dimensional subalgebra of $\gg$. 
\end{enumerate}
\end{defn}

\begin{prop} \label{int}
Any absolute weight $\gg$-module is integrable.
\end{prop}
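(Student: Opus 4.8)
The plan is to show that if $M$ is an absolute weight module, then every $g \in \gg$ acts locally finitely on $M$. The key idea is that it suffices to treat semisimple and nilpotent elements separately: since every $g \in \gg_n$ for some $n$, we may write $g = g_s + g_n$ with $g_s$ semisimple, $g_n$ nilpotent, $[g_s, g_n] = 0$, both lying in $\gg_n \subset \gg$. If $g_s$ and $g_n$ each act locally finitely on $M$ and commute, then on the finite-dimensional span of any orbit one gets local finiteness of $g = g_s + g_n$ as well; so the problem reduces to the two cases.

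For a semisimple element $g$, the first step is to embed $g$ into a splitting Cartan subalgebra $\hh$ of $\gg$. This should be possible: $g$ is a semisimple element of some $\gg_n$, hence lies in a Cartan subalgebra of $\gg_n$, and one can extend compatibly through the exhaustion $\gg = \limarr \gg_n$ to obtain a splitting Cartan subalgebra $\hh \ni g$ (the existence of splitting Cartan subalgebras containing a prescribed semisimple element is the technical point here, but it follows from the structure theory in \cite{DPS, NP}). Since $M$ is an absolute weight module, $M = \bigoplus_{\alpha \in \hh^*} M^\alpha$, and $g$ acts on each $M^\alpha$ by the scalar $\alpha(g)$; hence $g$ acts locally finitely (indeed semisimply) on $M$.

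For a nilpotent element $g$, the strategy is to exhibit $g$ as part of an $\sl(2)$-triple whose semisimple element is a regular-enough element of a splitting Cartan subalgebra, or more directly: choose a splitting Cartan subalgebra $\hh$ such that $g \in \gg^\alpha$ for some root $\alpha \in \Delta$ — that is, realize $g$ as a root vector. Any nilpotent element of $\gg_n$ is $G_n$-conjugate to a sum of root vectors, but the cleanest route is to note that $g$ lies in the nilradical of some Borel, and one can find a splitting Cartan subalgebra for which $g$ is a single root vector $e_\alpha$ after conjugating by an element of $\tilde G$ — but since being an absolute weight module is visibly a $\tilde G$-invariant condition (all splitting Cartan subalgebras are used), we may as well assume $g = e_\alpha \in \gg^\alpha$. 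Then for $m \in M^\beta$ we have $g \cdot m \in M^{\beta + \alpha}$, $g^2 \cdot m \in M^{\beta + 2\alpha}$, and so on; since $M$ is integrable... wait — that is what we are proving. Instead: the span of $\{m, g\cdot m, g^2 \cdot m, \ldots\}$ is a sum of weight spaces $M^{\beta + k\alpha}$, and one must argue this is finite-dimensional. This is where the real content lies, and the natural tool is to compare with a second splitting Cartan subalgebra $\hh'$ in which the relevant element becomes semisimple — e.g. replace the root vector $e_\alpha$ by a rotated toral element. Concretely, inside the rank-one (for $\alpha$ long/short appropriately) or rank-two subalgebra generated by $e_\alpha, e_{-\alpha}$, the element $e_\alpha$ is $\Aut$-conjugate to a semisimple element $t$ of that small subalgebra; extending $t$ to a splitting Cartan subalgebra $\hh'$ of all of $\gg$ and using that $M$ is an $\hh'$-weight module forces $e_\alpha$ to act locally finitely on $M$.

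The main obstacle I anticipate is the nilpotent case: one must produce, for a given nilpotent $g \in \gg$, a splitting Cartan subalgebra of the \emph{infinite-dimensional} $\gg$ that interacts with $g$ in a controlled way (either containing a conjugate of $g$ as a semisimple element of a subalgebra, or making $g$ a root vector), and then leverage the weight-space decomposition with respect to that Cartan to bound orbit dimensions. The finite-dimensional analogue is classical, so the work is entirely in checking that the construction survives the direct limit — i.e. that the locally finite/splitting conditions can be arranged compatibly across the exhaustion $\gg_n \subset \gg_{n+1} \subset \cdots$. Once local finiteness of every semisimple and every nilpotent element is established, the Jordan decomposition argument of the first paragraph finishes the proof.
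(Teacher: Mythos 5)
Your Jordan-decomposition reduction and your treatment of semisimple elements are sound: for commuting locally finite operators the orbit span argument works, and a semisimple element lies in some splitting Cartan subalgebra, so it acts semisimply on an absolute weight module -- this part coincides with the paper. The genuine gap is the nilpotent case. Your key assertion that a root vector $e_\alpha$ is $\Aut$-conjugate to a semisimple element $t$ of a small subalgebra is false: automorphisms preserve the Jordan type of an element, so a nonzero nilpotent element is never conjugate to a semisimple one. (What is true, and what the paper uses elsewhere, in the proof of Theorem \ref{th1}, is that $h+e$ is conjugate to $h$; that does not make $e$ itself conjugate to anything semisimple.) Your fallback -- placing the orbit of $m \in M^\beta$ inside $\bigoplus_{k} M^{\beta + k\alpha}$ -- does not bound the orbit either, since infinitely many of these weight spaces may be nonzero; you notice this yourself mid-argument, but the proposed repair does not close the hole. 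So local finiteness of nilpotent elements is not established and the proof is incomplete.

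The paper avoids the nilpotent case entirely. It shows, exactly as you do, that every semisimple element acts locally finitely, and then invokes the Fernando--Kac subalgebra $\gg[M]$ introduced in Section \ref{prelim}: the set of elements of $\gg$ acting locally finitely on $M$ is a Lie subalgebra. Since $\gg[M]$ contains all semisimple elements and $\gg$ is generated by its semisimple elements, $\gg[M] = \gg$, i.e.\ $M$ is integrable. If you want to salvage your route, replace the nilpotent analysis by this observation: for instance $2e = (h+e) - (h-e)$ writes any root vector as a difference of two semisimple elements, and closure of $\gg[M]$ under addition (the nontrivial content of the Fernando--Kac theorem, since these two elements do not commute) finishes the argument; your elementary commuting-operator lemma alone cannot.
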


\begin{proof}
Let $M$ be an absolute weight $\gg$-module.  Every semisimple element $h$ of $\gg$ lies in some splitting Cartan subalgebra $\hh$ of $\gg$, and since $M$ is an $\hh$-weight module, we see that $h$ acts locally finitely on $M$.  As $\gg$ is generated by its semisimple elements, the Fernando-Kac subalgebra $\gg [M]$ equals $\gg$, i.e.\ $M$ is integrable.
\end{proof}

We define the category of absolute weight modules as the full subcategory of $\gg$-mod whose objects are the absolute weight modules.  Proposition~\ref{int} shows that the category of absolute weight modules is in fact a subcategory of $\operatorname{Int}_\gg$.

\begin{lemma} \label{tildes}
For each $n$ one has $\tilde{G} = G \cdot \tilde{G}'_n$, where $$\tilde{G}'_n := \{ \gamma \in \tilde{G} \mid \gamma (g) = g \textrm{ for all } g \in \gg_n \}.$$
\end{lemma}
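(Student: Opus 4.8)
The plan is to show that any $\gamma \in \tilde{G}$ can be modified by an element of $G$ so as to act trivially on $\gg_n$. Since $G$ acts on $V$ preserving the filtration coming from the exhaustion, the key observation is that $\gamma$ restricts to an automorphism of $V$ carrying the finite-dimensional subspace $V_n \subset V$ to some subspace $\gamma(V_n)$; I would first argue that, after composing with a suitable element of $G_m$ for $m \gg n$, one may assume $\gamma(V_n) = V_n$ and moreover that $\gamma$ acts on $V_n$ by an element of $G_n$ (in the $\sl$ case, up to a scalar which is harmless since we are in the adjoint group). Concretely, $\gamma$ restricted to $V_n$ lies in $GL(V_n)$ and preserves the relevant bilinear form up to scalar in the orthogonal/symplectic cases; hence it lies in the image of (a conjugate of) $G_n$, and I can correct by that element.

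Next I would handle the complementary part. After the correction above we may assume $\gamma$ fixes $V_n$ pointwise; I then need to further correct $\gamma$ so that it fixes all of $\gg_n$, equivalently so that the induced map on $\gg_n \subset V_n \otimes (V_n)_*$ is the identity. Since $\gamma$ already fixes $V_n$ pointwise, its action on $\gg_n \hookrightarrow V_n \otimes V_{n,*}$ is determined by its action on $V_{n,*}$; but the condition defining $\tilde{G}$ forces $\gamma$ to preserve $V_*$, and compatibility of the contraction $\phi$ with $\gamma$ pins down $\gamma|_{V_{n,*}}$ to be the identity once $\gamma|_{V_n} = \mathrm{id}$. Thus no further correction is needed in the orthogonal and symplectic cases, and in the $\sl$ case one checks directly that $\gamma$ acting as identity on $V_n$ already acts as identity on $\gg_n$. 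This yields $\gamma = g \cdot \gamma'$ with $g \in G$ and $\gamma' \in \tilde{G}'_n$, which is exactly the asserted decomposition $\tilde{G} = G \cdot \tilde{G}'_n$ (the reverse inclusion being trivial).

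The main obstacle I anticipate is the first step: making precise that $\gamma$ can be brought, by an element of $G$, into a form fixing $V_n$. A priori $\gamma(V_n)$ is an arbitrary $n$-dimensional (or $2n$-dimensional) subspace of $V$, not obviously contained in any $V_m$; one must use that $\gamma(V_n)$ is finite-dimensional, hence contained in $V_m$ for $m$ large, and then invoke transitivity of $G_m$ (respectively its orthogonal/symplectic analogue, using Witt's theorem to match nondegenerate subspaces) on subspaces of the appropriate type inside $V_m$. Care is needed in the orthogonal case because of the two conjugacy classes of splitting Cartan subalgebras and the even/odd distinction in the exhaustion $\gg_n = \so(2n)$ or $\so(2n+1)$, but since the statement only concerns the fixed exhaustion this is a matter of choosing $m$ of the right parity. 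Once these geometric facts are in place, the algebra identifying the induced action on $\gg_n$ is routine given the embeddings $\gg_n \subset V_n \otimes V_{n,*}$ recalled in Section~\ref{prelim}.
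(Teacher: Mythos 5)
Your treatment of $\so(\infty)$ and $\sp(\infty)$ is sound and close in spirit to the paper's argument: one corrects $\gamma$ by an element of $G$ agreeing with it on a basis of $V_n$, and since elements of $\tilde{G}$ preserve the form up to a scalar (which is forced to be $1$ once the nondegenerate subspace $V_n$ is fixed pointwise), the corrected automorphism preserves $V_n^\perp$ and hence fixes $\gg_n = \gg \cap (V_n \otimes V_n)$ pointwise. The paper simply asserts the existence of such a correcting element rather than running your two-step ``move $\gamma(V_n)$ into $V_m$, then adjust inside $GL(V_n)$'' argument, but that part of your plan is fine.

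The genuine gap is in the $\sl(\infty)$ case. It is false that an element of $\tilde{G}$ fixing $V_n$ pointwise acts as the identity on $\gg_n$, and the contraction $\phi$ does not pin down $\gamma|_{V_{n,*}}$: preservation of the pairing together with $\gamma|_{V_n} = \mathrm{id}$ only gives $\gamma(w_i^*) \equiv w_i^*$ modulo the annihilator of $V_n$ in $V_*$, which is infinite dimensional. Concretely, take $\gamma = \exp(e_1 \otimes e_{n+1}^*)$, i.e.\ $\gamma(e_{n+1}) = e_{n+1} + e_1$ and $\gamma(e_j) = e_j$ for $j \neq n+1$. Then $\gamma \in G \subset \tilde{G}$ fixes $V_n$ pointwise, yet $\gamma \cdot e_1^* = e_1^* - e_{n+1}^*$, so for $X = e_2 \otimes e_1^* \in \gg_n$ one has $\gamma X \gamma^{-1} = e_2 \otimes (e_1^* - e_{n+1}^*) \neq X$; thus after your first correction the automorphism need not lie in $\tilde{G}'_n$, and the decomposition does not follow. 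The repair is exactly what the paper does: choose $\gamma'' \in G$ agreeing with $\gamma$ simultaneously on the basis $\{ w_i \}$ of $V_n$ and on the dual basis $\{ w_i^* \}$ of $V_n^* \subset V_*$ --- possible because $\gamma$ preserves the $V \times V_*$ pairing, so $\{\gamma(w_i)\}$, $\{\gamma(w_i^*)\}$ again form a dual pair of families contained in $V_m$ and $V_m^*$ for $m$ large --- and then $(\gamma'')^{-1}\gamma$ fixes both $V_n$ and $V_n^*$ pointwise, hence fixes $\gg_n \subset V_n \otimes V_n^*$ pointwise.
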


\begin{proof}
Let $\gg$ be $\so(\infty)$ or $\sp(\infty)$, and let $\gamma \in \tilde{G}$.   Fix a basis $\{ w_i \}$ of $V_n$.  There exists $\gamma'' \in G$ such that $(\gamma'')^{-1} (\gamma (w_i)) = w_i$ for all $1 \leq i \leq 2n$.  Since $\gg \subset V \otimes V$ and $\gg_n = \gg \cap (V_n \otimes V_n)$, we see that $(\gamma'')^{-1} \gamma \in \tilde{G}'_n$.

For $\gg = \sl(\infty)$, the analogous statement is as follows.  In this case one has $\gg_n = \gg \cap (V_n \otimes V_n^*)$.  Fix dual bases  $\{ w_i \}$ and $\{ w_i^* \}$ of $V_n$ and $V_n^*$, respectively.  Then for any $\gamma \in \tilde{G}$, there is an element $\gamma'' \in G$ such that $(\gamma'')^{-1} (\gamma (w_i)) = w_i$ and $(\gamma'')^{-1} (\gamma (w_i^*)) = w_i^*$ for each $1 \leq i \leq n$.  Therefore $(\gamma'')^{-1} \gamma \in \tilde{G}'_n$.
\end{proof}

\begin{thm}\label{th1}
The following conditions on a $\gg$-module $M$ of finite length are equivalent:
\begin{enumerate}
\item \label{1one} $M$ is an absolute weight module.
\item \label{1three} $M$ is a weight module for some splitting Cartan subalgebra $\hh\subset\gg$ and $M$ is $\tilde{G}$-invariant.
\item \label{1four} $M$ is integrable and $\Ann_\gg m$ is finite corank for all $m\in M$.
\end{enumerate}
\end{thm}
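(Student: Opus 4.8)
The plan is to prove the cycle of implications $(\ref{1one}) \Rightarrow (\ref{1four}) \Rightarrow (\ref{1three}) \Rightarrow (\ref{1one})$, since this ordering lets each step feed naturally into the next. The implication $(\ref{1one}) \Rightarrow (\ref{1four})$ has two halves: integrability is exactly Proposition~\ref{int}, so only the finite-corank condition on annihilators needs work. Given $m \in M$, I would pick a splitting Cartan subalgebra $\hh$ and write $m$ as a finite sum of $\hh$-weight vectors; it suffices to treat a single weight vector, and then the centralizer of a suitable $\gg_n$ (containing the relevant part of $\hh$) should annihilate $m$ modulo a finite-dimensional space, after which passing to the commutator subalgebra of that centralizer and intersecting with $\Ann_\gg m$ gives what is needed. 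The point is that for $n$ large the centralizer $\gg_{\perp n}$ of $\gg_n$ is again one of our finitary Lie algebras of the same type, and its action on a fixed weight vector is controlled.

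For $(\ref{1four}) \Rightarrow (\ref{1three})$, I would fix one splitting Cartan $\hh$ and show $M$ is an $\hh$-weight module and $\tilde{G}$-invariant. Weight-module-ness should follow from integrability together with the finite-corank hypothesis: integrability gives local finiteness of each element of $\hh$, and one needs to rule out the failure of the weight space decomposition to exhaust $M$; here the finite-corank condition on $\Ann_\gg m$ forces each $m$ into a sum of weight spaces, because modulo a finite-corank subalgebra the element is fixed, and within a finite-dimensional complement $\hh$ acts semisimply. For $\tilde{G}$-invariance, I would use Lemma~\ref{tildes}: writing $\gamma = \gamma'' \cdot \gamma_n'$ with $\gamma'' \in G$ and $\gamma_n' \in \tilde{G}'_n$, the $G$-part produces an isomorphic module automatically by integrability (inner automorphisms), so it remains to handle $\gamma_n' \in \tilde{G}'_n$; since $\gamma_n'$ fixes $\gg_n$ pointwise, one can try to build the isomorphism $M^{\gamma_n'} \simeq M$ compatibly on larger and larger pieces, using finite length to stabilize.

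The implication $(\ref{1three}) \Rightarrow (\ref{1one})$ is where I expect the main obstacle. I would take a splitting Cartan $\hh$ for which $M$ is a weight module and an arbitrary splitting Cartan $\hh'$, and try to show $M$ is an $\hh'$-weight module. When $\hh'$ is $\tilde{G}$-conjugate to $\hh$, say $\hh' = \gamma(\hh)$, then $M^\gamma$ is an $\hh'$-weight module, and $\tilde{G}$-invariance gives $M \simeq M^\gamma$, hence $M$ itself is an $\hh'$-weight module. For $\gg = \sl(\infty)$ and $\sp(\infty)$ all splitting Cartan subalgebras are $\tilde{G}$-conjugate, so this finishes those cases. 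The genuinely hard case is $\gg = \so(\infty)$, where there are two $\tilde{G}$-conjugacy classes of splitting Cartan subalgebras: one must show directly that a $\tilde{G}$-invariant weight module for one class is automatically a weight module for the other. Here I would argue at finite level — intersect $\hh'$ with the subalgebras $\gg_{\perp n}$ and use that $M$ has finite length, so the $\gg_{\perp n}$-socle filtration stabilizes and each layer is a sum of finitely many simple integrable $\gg_{\perp n}$-modules, which are weight modules for any splitting Cartan of $\gg_{\perp n}$ — and then glue. Making this gluing precise across the two conjugacy classes, controlling how an $\hh'$-weight decomposition on $\gg_{\perp n}$-stable pieces assembles into one on all of $M$, is the step I expect to require the most care.
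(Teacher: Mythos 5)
Your overall plan (the cycle \eqref{1one}$\Rightarrow$\eqref{1four}$\Rightarrow$\eqref{1three}$\Rightarrow$\eqref{1one}) matches the paper, but the key step \eqref{1one}$\Rightarrow$\eqref{1four} has a genuine gap. Your sketch uses the weight decomposition with respect to a \emph{single} splitting Cartan subalgebra $\hh$ and then asserts that the centralizer of a suitable $\gg_n$ ``should annihilate $m$ modulo a finite-dimensional space.'' That assertion is exactly what has to be proved, and it cannot follow from so little: being an $\hh$-weight vector for one fixed $\hh$ gives no control over how root vectors act on $m$, and there are simple integrable $\hh$-weight modules (e.g.\ of semi-infinite wedge / highest weight type) whose vectors do not have finite corank annihilators --- this is precisely why condition \eqref{1three} must include $\tilde G$-invariance rather than just the weight property for one $\hh$. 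The paper's proof uses the absolute weight hypothesis for infinitely many \emph{different} Cartan subalgebras: after reducing to $M$ simple, it chooses sets of commuting simple roots, observes that $\{h_i+e_i\}$ is a commuting family of semisimple elements $\tilde G$-conjugate to $\{h_i\}$ and hence contained in a splitting Cartan subalgebra $\hh'$, takes an $\hh'$-weight vector $m'$, uses simplicity to write $m\in U(\gg_n)\cdot m'$, concludes that $m$ is an eigenvector of $h_i+e_i$ for almost all $i$, and then local nilpotency of $e_i$ forces $e_i\cdot m=0$; repeating with $\{h_i+f_i\}$ and with several choices of commuting simple roots (adapted to each type) gives $e_i\cdot m=f_i\cdot m=0$ for almost all $i$. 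None of this mechanism (nor the use of simplicity) appears in your proposal.

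The $\so(\infty)$ case of \eqref{1three}$\Rightarrow$\eqref{1one} is also not established. Your plan relies on the claim that the simple integrable constituents over $\gg'_n$ are weight modules for \emph{any} splitting Cartan subalgebra of $\gg'_n$; this is false in general (simple integrable modules form a much larger class than simple absolute weight modules --- the claim is essentially the theorem itself), and the ``gluing'' is left open. The paper instead routes back through \eqref{1four}: knowing that $M$ is a weight module for all Cartan subalgebras in one conjugacy class already suffices to run the proof of \eqref{1one}$\Rightarrow$\eqref{1four}, and then the weight-module part of \eqref{1four}$\Rightarrow$\eqref{1three} is run twice, with $\gg_n=\so(2n)$ and with $\gg_n=\so(2n+1)$, producing a weight Cartan subalgebra in each of the two conjugacy classes; $\tilde G$-invariance then propagates the weight property within each class. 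Finally, two smaller repairs are needed in your \eqref{1four}$\Rightarrow$\eqref{1three}: finite corank does not mean finite codimension, so there is no ``finite-dimensional complement'' on which a prechosen $\hh$ acts --- the paper instead builds an adapted Cartan subalgebra $\hh=\hh'_n+\hh_k$ from a splitting Cartan subalgebra of the $\gg'_n$ annihilating the generators together with a Cartan subalgebra of $\gg_k$; and $\tilde G$-invariance is obtained not by stabilizing isomorphisms over larger and larger pieces, but by defining the action of $\gamma$ on each vector $m$ via a decomposition $\gamma=\gamma''\gamma'$ with $\gamma''\in G$, $\gamma'\in\tilde G'_n$ and $\gg'_n\cdot m=0$, and checking this is well defined and yields $M^\gamma\simeq M$.
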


\begin{proof}
Let us show that \eqref{1one} implies \eqref{1four}.  We already proved in Proposition~\ref{int} that a $\gg$-module $M$ satisfying \eqref{1one} is integrable.  Furthermore, it suffices to prove that $\Ann_\gg m$ is finite corank for all $m\in M$ under the assumption that the $\gg$-module $M$ is simple.  This follows from the observation that a finite intersection of finite corank subalgebras is finite corank. 

Fix a splitting Cartan subalgebra $\hh$ of $\gg$ such that $\hh \cap \gg_n$ is a Cartan subalgebra of $\gg_n$; let $\bb = \hh \supsetplus \nn$ be a Borel subalgebra of $\gg$ whose set of roots (i.e.\ positive roots) is generated by the simple (i.e.\ indecomposable) roots.  Fix standard bases $e_i$, $h_i$, $f_i$ for the corresponding root $\sl(2)$-subalgebras.  Fix a nonzero $\hh$-weight vector $m \in M$.  

Choose a set of commuting simple roots $\{ \alpha_i \mid i \in I \}$ of $\bb$.
The set of semisimple elements $\{ h_i + e_i \mid i \in I \}$ is $\tilde G$-conjugate to the set $\{ h_i \mid i \in I \}$, and can thus be extended to a splitting Cartan subalgebra $\hh'$ of $\gg$.  Since $M$ is an absolute weight module, there is a nonzero $\hh'$-weight vector $m' \in M$.  As $M$ is simple, it must be that $m \in U(\gg) \cdot m'$.  Moreover, one has $m \in U(\gg_n) \cdot m'$ for some $n$.  For almost all $i$, $h_i$ and $e_i$ commute with $\gg_n$, in which case $m$ is an eigenvector for $h_i + e_i$.  Thus $e_i \cdot m$ is a scalar multiple of $m$.  Since $M$ is integrable, $e_i$ acts locally nilpotently, and we conclude that $e_i \cdot m = 0$ for all but finitely many $i$.  By considering the set $\{ h_i + f_i \mid i \in I \}$ in place of $\{ h_i + e_i \mid i \in I \}$, we see that $f_i \cdot m = 0$ for all but finitely many $i$, and hence $e_i \cdot m = f_i \cdot m = 0$ for all but finitely many $i \in I$.  

We now consider separately each of the three possible choices of $\gg$.  For $\gg = \sl(\infty)$, we may assume that the simple roots of $\bb$ are of the form $\{ \epsilon_i - \epsilon_{i+1} \mid i \in \ZZ_{>0} \}$.  
We first choose the set of commuting simple roots to be $\{\epsilon_{2i-1} - \epsilon_{2i} \mid i \in \ZZ_{>0} \}$ 
and obtain in this way that $e_i \cdot  m = f_i \cdot m = 0$ for almost all odd indices $i$.  By choosing the set of 
commuting simple roots as $\{\epsilon_{2i} - \epsilon_{2i+1} \mid i
\in \ZZ_{>0} \}$, we have $e_i \cdot  m = f_i \cdot m =  0$ for almost 
all even indices $i$, hence for almost all $i$.  Since it contains
$e_i$ and $f_i$ for almost all $i$, the subalgebra
$\Ann_\gg m$ is a finite corank subalgebra of $\gg = \sl(\infty)$.  

For $\gg = \so(\infty)$, one may assume that the set of simple roots of $\gg$ is $\{ -\epsilon_1 - \epsilon_2 \} \cup \{ \epsilon_i - \epsilon_{i+1} \mid  i \in \ZZ_{>0} \}$, and one considers in addition to the above two
sets of positive roots the sets $\{ -\epsilon_{2i} - \epsilon_{2i+1}
\mid i \in \ZZ_{>0} \}$ and $\{ -\epsilon_{2i-1} - \epsilon_{2i} \mid i
\in \ZZ_{>0} \}$.  For $\gg = \sp(\infty)$, the set of simple roots can be chosen as $\{ -2\epsilon_1 \} \cup \{ \epsilon_i - \epsilon_{i+1} \mid  i \in \ZZ_{>0} \}$, and one considers in addition to the two sets of positive roots for $\sl(\infty)$ the sets  $\{ -2\epsilon_{2i
  +1} \mid i \in \ZZ_{>0} \}$ and $\{ -2\epsilon_{2i} \mid i \in
\ZZ_{>0} \}$.  The remainder of the argument is then the same as for $\sl(\infty)$.

Next we prove that \eqref{1four} implies \eqref{1three}.  

We first show that a $\gg$-module $M$ satisfying \eqref{1four} is a weight
module for some splitting Cartan subalgebra $\hh \subset \gg$.  Fix a
finite set $\{ m_1, \ldots, m_s \}$ of generators of $M$. Let $\gg'_n$ be the commutator subalgebra of the centralizer in $\gg$ of $\gg_n$.  There exists a finite corank subalgebra that annihilates $m_1, \ldots, m_s$, and hence $\gg'_n$ annihilates $m_1, \ldots, m_s$ for some $n$. Let $\hh'_n$
be a splitting Cartan subalgebra of $\gg'_n$. Obviously $M$ is
semisimple over $\hh'_n$. One can find $k$ and a Cartan
subalgebra $\hh_k\subset\gg_k$ such that $\hh=\hh'_n+\hh_k$ is a
splitting Cartan subalgebra of $\gg$. (If $\gg=\so(\infty)$ or
$\sp(\infty)$ one can choose $k=n$; if $\gg=\sl(\infty)$, one can set $k=n+1$). Since $M$ is integrable, $M$ is semisimple over $\hh_k$. Hence $M$ is semisimple over $\hh$. 

To finish the proof that \eqref{1four} implies \eqref{1three}, we need to show that $M$ is $\tilde{G}$-invariant.  For each $n$ one has 
$\tilde{G} = G \cdot \tilde{G}'_n$ by Lemma~\ref{tildes}. 
Fix $\gamma \in \tilde{G}$ and $m \in M$.  Then for some $n$, the vector $m$ is fixed by $\gg'_n$.   We choose a decomposition $\gamma = \gamma'' \gamma'$ so that $\gamma' \in \tilde{G}'_n$ and $\gamma'' \in G$.  We then 
set $\gamma (m) := \gamma'' (m)$, and note that the action of $G$ on $M$ is well-defined because $M$ is assumed to be integrable.  This yields a well-defined $\tilde{G}$-module structure on $M$ since, for any other decomposition $\gamma = \bar{\gamma}'' \bar{\gamma}'$ as above, one has $(\bar{\gamma}'')^{-1}\gamma'' = \bar{\gamma}' (\gamma')^{-1} \in \tilde{G}'_n \cap G = \{ \gamma \in G \mid \gamma (g) = g \textrm{ for all } g \in \gg_n \}$ which must preserve $m$.

Fix now $\gamma \in \tilde{G}$ and consider the linear operator
$$\varphi_\gamma : M^\gamma \rightarrow M, \hspace{2ex} m \mapsto \gamma^{-1} (m).$$
We claim that $\varphi_\gamma$ is an isomorphism.  For this we need to check that $g \cdot \varphi_\gamma (m) = \varphi_\gamma (\gamma (g) \cdot m)$ for any $g \in \gg$ and $m \in M$.  We have $g \cdot \varphi_\gamma (m) = g \cdot (\gamma^{-1} (m))) = \varphi_\gamma (\gamma (g \cdot \gamma^{-1} (m)))$, hence it suffices to check that $\gamma (g \cdot \gamma^{-1} (m)) = \gamma (g) \cdot m$ for every $g \in \gg$ and $m \in M$.  After choosing a decomposition $\gamma = \gamma'' \gamma'$ such that $\gamma'' \in G$ and $\gamma'$ fixes $m$, $g$ and $g \cdot m$, all that remains to check is that
\begin{equation*}
\gamma'' (g \cdot (\gamma'')^{-1} (m)) = \gamma'' (g) \cdot m
\end{equation*}
for all $g \in \gg$.  This latter equality is the well-known relation between the $G$-module structure on $M$ and the adjoint action of $G$ on $\gg$.

To complete the proof of the theorem we need to show that
\eqref{1three} implies \eqref{1one}.  What is clear is that
\eqref{1three} implies a slightly weaker statement, namely that $M$ is
a weight module for any splitting Cartan subalgebra belonging to the
same $\tilde{G}$-conjugacy class as the given splitting Cartan
subalgebra $\hh$.  For $\gg = \sl(\infty), \sp(\infty)$, this proves
\eqref{1one}, as all splitting Cartan subalgebras are conjugate under
$\tilde{G}$.  

Consider now the case $\gg=\so(\infty)$. In this case there are two $\tilde{G}$-conjugacy classes of
splitting Cartan subalgebras \cite{DPS}. Note that if $M$ is
semisimple over every Cartan subalgebra from one $\tilde{G}$-conjugacy
class, then   \eqref{1four} holds as follows from the proof of the implication
\eqref{1one} $\Rightarrow$ \eqref{1four}.  
Furthermore, the proof
that a $\gg$-module of finite length $M$ satisfying \eqref{1four} is a weight module for some
splitting Cartan subalgebra involves a choice of $\gg_n$.
For $\gg=\so(\infty)$ there are two different possible choices, namely $\gg_n=\so(2n)$ and $\gg_n=\so(2n+1)$, which in turn produce splitting Cartan subalgebras from the two $\tilde{G}$-conjugacy classes.  This shows that in each $\tilde{G}$-conjugacy class there is a splitting Cartan subalgebra of $\gg$ for which $M$ is a weight module, and hence we may conclude that \eqref{1three} implies \eqref{1one} also for $\gg= \so(\infty)$.
\end{proof}

\begin{cor} \label{twoconjugacyclasses}
Let $\gg = \so(\infty)$ and $M$ be a finite length $\gg$-module which is an $\hh$-weight module for all splitting Cartan subalgebras $\hh \subset \gg$ in either of the two $\tilde{G}$-conjugacy classes.  Then $M$ is an $\hh$-weight module for all splitting Cartan subalgebras $\hh$ of $\gg$.
\end{cor}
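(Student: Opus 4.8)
The strategy is to observe that the corollary is essentially contained in the proof of Theorem~\ref{th1}, so that no new argument is needed beyond a careful bookkeeping of conjugacy classes. Denote by $\mathcal{C}_1$ the $\tilde{G}$-conjugacy class of splitting Cartan subalgebras coming from the exhaustion $\so(\infty) = \limarr \so(2n)$, and by $\mathcal{C}_2$ the class coming from $\so(\infty) = \limarr \so(2n+1)$. By hypothesis $M$ is an $\hh$-weight module for all $\hh$ in one of these two classes; assume it is $\mathcal{C}_1$, the case of $\mathcal{C}_2$ being identical after exchanging the roles of $\so(2n)$ and $\so(2n+1)$ throughout.

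The first step is to re-examine the implication \eqref{1one} $\Rightarrow$ \eqref{1four} from the proof of Theorem~\ref{th1} and to check that, for $\gg = \so(\infty)$, it uses only that $M$ is an $\hh$-weight module for splitting Cartan subalgebras $\hh$ lying in a \emph{single} $\tilde{G}$-conjugacy class. In that argument one fixes a splitting Cartan subalgebra $\hh$ with $\hh \cap \gg_n$ a Cartan subalgebra of $\gg_n$; taking $\gg_n = \so(2n)$ puts $\hh \in \mathcal{C}_1$. The only further splitting Cartan subalgebras entering the proof are the $\hh'$ obtained by extending a set of semisimple elements of the form $\{ h_i + e_i \mid i \in I \}$ (or $\{ h_i + f_i \mid i \in I \}$) to a splitting Cartan subalgebra. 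Such a set is $\tilde{G}$-conjugate, via some $\gamma \in \tilde{G}$, to $\{ h_i \mid i \in I \} \subset \hh$; hence $\gamma(\hh)$ is a splitting Cartan subalgebra containing $\{ h_i + e_i \mid i \in I \}$, and one may take $\hh' = \gamma(\hh) \in \mathcal{C}_1$. Thus the hypothesis guarantees that $M$ is an $\hh'$-weight module for each such $\hh'$, and the proof of \eqref{1one} $\Rightarrow$ \eqref{1four} applies verbatim — this is exactly the remark recorded parenthetically in the proof of Theorem~\ref{th1}. We conclude that $M$ satisfies \eqref{1four}.

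It remains to run the implications \eqref{1four} $\Rightarrow$ \eqref{1three} $\Rightarrow$ \eqref{1one} of Theorem~\ref{th1}. The step \eqref{1four} $\Rightarrow$ \eqref{1three} shows that $M$ is $\tilde{G}$-invariant and that $M$ is an $\hh$-weight module for some splitting Cartan subalgebra; as noted in the final paragraph of the proof of Theorem~\ref{th1}, the construction of such an $\hh$ involves a choice of $\gg_n$, and the choices $\gg_n = \so(2n)$ and $\gg_n = \so(2n+1)$ produce splitting Cartan subalgebras in $\mathcal{C}_1$ and in $\mathcal{C}_2$, respectively, for which $M$ is a weight module. Together with the $\tilde{G}$-invariance of $M$, the step \eqref{1three} $\Rightarrow$ \eqref{1one} then yields that $M$ is an absolute weight module, i.e.\ an $\hh$-weight module for every splitting Cartan subalgebra $\hh$ of $\so(\infty)$, as claimed.

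The only genuine point requiring verification — everything else being a formal concatenation of implications already proved — is the assertion in the second paragraph that the auxiliary splitting Cartan subalgebras $\hh'$ appearing in the proof of \eqref{1one} $\Rightarrow$ \eqref{1four} can be chosen within the $\tilde{G}$-conjugacy class of the fixed $\hh$; this is precisely what makes the single-class hypothesis sufficient. Given the explicit description of those $\hh'$ in the proof of Theorem~\ref{th1}, I expect this to be routine, so the bulk of the work is the organization above.
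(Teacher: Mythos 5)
Your proposal is correct and follows essentially the same route as the paper: the paper's own proof is just a terser version of your argument, noting that integrability follows by the argument of Proposition~\ref{int} and that condition \eqref{1four} holds by the proof of \eqref{1one}$\Rightarrow$\eqref{1four} in Theorem~\ref{th1} (which, as you verify in detail, only uses splitting Cartan subalgebras from the single given $\tilde{G}$-conjugacy class), after which Theorem~\ref{th1} yields the conclusion. Your explicit check that the auxiliary subalgebras $\hh'$ can be taken in the same conjugacy class is exactly the point the paper records parenthetically inside the proof of Theorem~\ref{th1}.
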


\begin{proof}
Let $\hh$ be a Cartan subalgebra of $\gg$, and let $M$ be a finite length $\gg$-module which is a weight module for all splitting Cartan subalgebras in the $\tilde{G}$-conjugacy class of $\hh$.  Then $M$ is integrable, by the same argument as in the proof of Proposition~\ref{int}.  Finally, \eqref{1four} holds by the same proof as that of \eqref{1one}$\Rightarrow$\eqref{1four} in Theorem~\ref{th1}.
\end{proof}

We denote by $\Tg$ the full subcategory of $\gg$-mod consisting of finite length modules satisfying the
equivalent conditions of Theorem ~\ref{th1}.
Then $\Tg$ is an abelian category and a monoidal category with respect to the usual tensor product of $\gg$-modules, and $\Tg$ is a subcategory of the category of absolute weight modules.  In addition, for $\gg = \sl(\infty)$, $\Tg$ has an  
involution
$$( \, \cdot \, )_* : \Tg \rightarrow \Tg,$$
which one can think of as ``restricted dual."  Indeed, in this case any outer automorphism $w \in \Aut \gg$ induces the autoequivalence of categories
\begin{align*}
w_\gg : \Tg & \rightarrow \Tg \\
M & \mapsto M^w.
\end{align*}
Since, however, any object of $\Tg$ is $\tilde{G}$-invariant, the functor $w_\gg$ does not depend on the choice of $w$ and is an involution, i.e.\ $w_\gg^2 = \mathrm{id}$.  We denote this involution by $(\,\cdot \,)_*$ in agreement with the fact that it maps $V$ to $V_*$.  For $\gg = \so(\infty)$, $\sp(\infty)$, we define $(\, \cdot \,)_*$ to be the trivial involution on $\Tg$.

\section{Simple objects and indecomposable injectives of $\Tg$}

Next we describe the simple objects of $\Tg$.  For this we need to recall some results about tensor representations from \cite{PStyr}.

By $T$ we denote the tensor algebra $T( V \oplus V_*)$ for $\gg = \sl(\infty)$, and $T(V)$ for $\gg = \so(\infty)$, $\sp(\infty)$.  That is, we have 
\begin{align*}
T & := \bigoplus_{p \geq 0 \textrm{, } q \geq 0} T^{p,q} &\textrm{for } \gg = \sl(\infty),\\
\intertext{and}
T & := \bigoplus_{p \geq 0} T^p & \textrm{for } \gg = \so(\infty) \textrm{, }\sp(\infty), 
\end{align*}
where $T^{p,q} := V^{\otimes p} \otimes (V_*)^{\otimes q}$ and $T^p := V^{\otimes p}$.  In addition, we set
\begin{align*}
T^{\leq r} & := \bigoplus_{p + q \leq r} T^{p,q} &\textrm{for } \gg = \sl(\infty),\\
\intertext{and}
T^{\leq r} & := \bigoplus_{p \leq r} T^p & \textrm{for } \gg = \so(\infty) \textrm{, }\sp(\infty).
\end{align*}
By a \emph{tensor module} we mean any $\gg$-module isomorphic to a subquotient of a finite direct sum of copies of $T^{\leq r}$ for some $r$.

By a \emph{partition} we mean a non-strictly decreasing finite sequence of positive integers $\mu = ( \mu_1, \mu_2, \ldots , \mu_s )$ with $\mu_1 \geq \mu_2 \geq \cdots \geq \mu_s$.  The empty partition is denoted by $0$.

Given a partition $\mu= ( \mu_1, \mu_2, \ldots , \mu_s )$ and a classical finite-dimensional Lie algebra $\gg_n$ of rank $n \geq s$, the irreducible $\gg_n$-module $(V_n)_\mu$ with highest weight $\mu$ is always well-defined.  Moreover, for a fixed $\mu$ and growing $n$, the modules $(V_n)_\mu$ are naturally nested and determine a unique simple $(\gg = \limarr \gg_n)$-module $V_\mu := \limarr (V_n)_\mu$.  For $\gg = \sl(n)$, there is another simple $\gg$-module naturally associated to $\mu$, namely $(V_\mu)_*$.

In what follows we will consider pairs of partitions for $\gg = \sl(\infty)$ and single partitions for $\gg = \so(\infty)$, $\sp(\infty)$.  Given $\lambda = (\lambda^1, \lambda^2)$ for $\gg = \sl(\infty)$, we set $\tilde V_\lambda := V_{\lambda^1} \otimes (V_{\lambda^2})_*$.  For $\gg = \so(\infty)$, $\sp(\infty)$ and for a single partition $\lambda$, the $\gg$-module $\tilde V_\lambda$ is similarly defined: we embed $\gg$ into $\sl(\infty)$ so that both the natural $\sl(\infty)$-module  
and the conatural $\sl(\infty)$-module 
are identified with $V$ as $\gg$-modules, and define $\tilde V_\lambda$ as the irreducible $\sl(\infty)$-module $V_\lambda$ corresponding to the partition $\lambda$ as defined above.  Then $\tilde{V}_\lambda$ is generally a reducible $\gg$-module.

It is easy to see that for $\gg = \sl(\infty)$,
\begin{equation}\label{triangle}
T = \bigoplus_\lambda d_\lambda \tilde V_\lambda
\end{equation}
where $\lambda = (\lambda^1, \lambda^2)$, $d_\lambda := d_{\lambda^1} d_{\lambda^2}$, and $d_{\lambda^i}$ is the dimension of the simple $S_n$-module corresponding to the partition $\lambda^i$ for $n > | \lambda^i |$.  For $\gg = \so(\infty)$, $\sp(\infty)$, Equation \eqref{triangle} also holds, with $\lambda$ taken to stand for a single partition.  Both statements follow from the obvious infinite-dimensional version of Schur-Weyl duality for the tensor algebra $T$ considered as an $\sl(\infty)$-module (see for instance \cite{PStyr}).  Moreover, according to \cite[Theorems 3.2, 4.2]{PStyr}, 
\begin{equation} \label{nabla}
\soc (\tilde V_\lambda) = V_\lambda
\end{equation}
for $\gg = \so(\infty)$, $\sp(\infty)$, while $\soc (\tilde{V}_\lambda)$ is a simple $\gg$-module for $\gg = \sl(\infty)$ \cite[Theorem 2.3]{PStyr}.  Here $\soc(\, \cdot \,)$ stands for the socle of a $\gg$-module.  We set $V_\lambda := \soc (\tilde V_\lambda)$ also for $\gg = \sl(\infty)$, so that \eqref{nabla} holds for any $\gg$.  It is proved in \cite{PStyr} that $\tilde{V}_\lambda$ (and consequently $T^{\leq r}$) has finite length.

It follows also from \cite{PStyr} that any simple tensor module is isomorphic to $V_\lambda$ for some $\lambda$.  In particular, every simple subquotient of $T$ is also a simple submodule of $T$.

For any partition $\mu = ( \mu_1, \mu_2, \ldots , \mu_s )$, we set $\# \mu := s$ and $| \mu | := \sum_{i=1}^s \mu_i$.  In the case of $\gg = \sl(\infty)$, when $\lambda = (\lambda^1, \lambda^2)$, we set $\# \lambda := \# \lambda^1 + \# \lambda^2$ and $| \lambda | := | \lambda^1 | + | \lambda^2 |$.  

We are now ready for the following lemma.

\begin{lemma} \label{nonzeromarks}
Let $\gg=\sl(\infty)$ and $\lambda=(\lambda^1,\lambda^2)$ with $\# \lambda = k > 0$. 
Then $(V_k)_{\lambda^1} \otimes (V_k^*)_{\lambda^2}$ generates
$\tilde{V}_\lambda$. 

Let $\gg = \so(\infty)$, $\sp(\infty)$, and let $\lambda$ be a partition with $\# \lambda = k > 0$. Then
the $\sl(V_{2k})$-submodule $(V_{2k})_\lambda$ of $\tilde{V}_\lambda$ generates $\tilde{V}_\lambda$. 
\end{lemma}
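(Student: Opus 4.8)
The plan is to reduce everything to the $\sl(\infty)$ case and then to a stabilization argument at finite level. First consider $\gg = \sl(\infty)$ with $\lambda = (\lambda^1, \lambda^2)$ and $k = \#\lambda^1 + \#\lambda^2 > 0$. Recall $\tilde V_\lambda = V_{\lambda^1} \otimes (V_{\lambda^2})_*$, and that $V_{\lambda^i} = \limarr (V_n)_{\lambda^i}$ where for $n > |\lambda^i|$ the module $(V_n)_{\lambda^i}$ is the irreducible $\sl(n)$-module of highest weight $\lambda^i$. Since $k \geq \#\lambda^i$, the weight $\lambda^i$ already makes sense for $\gg_k = \sl(k)$, so $(V_k)_{\lambda^1} \otimes (V_k^*)_{\lambda^2}$ is a well-defined $\gg_k$-submodule of $\tilde V_\lambda$. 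I would then argue that the $U(\gg)$-submodule $M$ generated by $(V_k)_{\lambda^1}\otimes(V_k^*)_{\lambda^2}$ contains $(V_n)_{\lambda^1}\otimes(V_n^*)_{\lambda^2}$ for every $n \geq k$, whence $M = \tilde V_\lambda$ by passing to the limit. The key inductive step is: given $n \geq k$, show $(V_{n+1})_{\lambda^i}$ is generated as a $U(\gg_{n+1})$-module by $(V_n)_{\lambda^i}$. This is a standard fact about finite-dimensional highest weight modules for the ``left-upper-corner'' chain $\sl(n) \subset \sl(n+1)$: the $\sl(n)$-submodule $(V_n)_{\lambda^i}$ contains a highest weight vector of $(V_{n+1})_{\lambda^i}$ (since $\lambda^i$ is supported on the first few $\epsilon_j$, $j \leq \#\lambda^i \leq k \leq n$, and the highest weight space is one-dimensional), and an irreducible highest weight module is generated by its highest weight vector; hence $U(\gg_{n+1})\cdot(V_n)_{\lambda^i} = (V_{n+1})_{\lambda^i}$. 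Tensoring the two factors (using that $U(\gg_{n+1})$ acts and the highest weight vector of the tensor product lies in the tensor product of highest weight spaces, together with the fact that $(V_{n+1})_{\lambda^1}\otimes(V_{n+1}^*)_{\lambda^2}$ contains $(V_{n+1})_{\lambda^1\sqcup(\lambda^2)^*}$-type summands but more simply that it is generated over $\gg_{n+1}$ by the product of the two generating subspaces) gives the claim for $\tilde V_\lambda$.

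For $\gg = \so(\infty)$ or $\sp(\infty)$ with a single partition $\lambda$, $\#\lambda = k > 0$, recall the construction: one embeds $\gg \hookrightarrow \sl(\infty)$ so that the natural and conatural $\sl(\infty)$-modules are both identified with $V$ as $\gg$-modules, and $\tilde V_\lambda$ is the irreducible $\sl(\infty)$-module $V_\lambda$ for that embedding. In particular $\tilde V_\lambda = \limarr (V_N)_\lambda$ where $V_N$ runs over the natural $\sl(N)$-modules along the embedded chain $\sl(N) \subset \sl(\infty)$. Now $\gg_k = \so(2k)$ (or $\so(2k+1)$, or $\sp(2k)$) sits inside $\sl(V_{2k}) = \sl(2k)$, and $(V_{2k})_\lambda$ — the irreducible $\sl(2k)$-module of highest weight $\lambda$, which is well-defined since $2k \geq k = \#\lambda$ — is a subspace of $\tilde V_\lambda$. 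So it suffices to show this $\sl(2k)$-submodule generates $\tilde V_\lambda$ as an $\sl(\infty)$-module. Since $\gg \supseteq \gg_k$ and $\gg_k$ contains $\sl(V_{2k})$? — not quite, but: the point is that $\gg$ contains the chain of subalgebras whose union is $\sl(\infty)$ (via the fixed embedding) and along that chain $\sl(V_{2k}) = \sl(2k)$ is the $k$-th term (as $\dim V_{2k} = 2k$ for $\so(2k), \sp(2k)$, and likewise $2k+1$ is $\geq k$). Thus by exactly the $\sl(\infty)$ argument just given — with the ``single partition'' $\lambda$ playing the role of $\lambda^1$ and the empty partition playing the role of $\lambda^2$ — the $\sl(2k)$-module $(V_{2k})_\lambda$ generates $V_\lambda = \tilde V_\lambda$ over $\sl(\infty)$, hence a fortiori over $\gg \supseteq$ the relevant subalgebras. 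One only has to check that $\sl(V_{2k})$ is genuinely a term of the defining chain for $\tilde V_\lambda$ and that $2k \geq \#\lambda$, which both hold since $\#\lambda = k$.

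The main obstacle, I expect, is bookkeeping rather than conceptual: making precise that the ``left-upper-corner'' inclusions $\gg_n \hookrightarrow \gg_{n+1}$ carry the generating subspace $(V_n)_{\lambda^i}$ onto a subspace of $(V_{n+1})_{\lambda^i}$ that contains a highest weight vector — i.e. that the nesting of the modules $V_\mu = \limarr (V_n)_\mu$ is compatible with the nesting of the natural modules in the way asserted before Lemma~\ref{nonzeromarks}, and that $2 \leq k \leq n$ suffices for all the rank conditions. For the orthogonal/symplectic case there is the additional small point that $\tilde V_\lambda$ is defined via an auxiliary embedding into $\sl(\infty)$, so one must confirm that ``the $\sl(V_{2k})$-submodule $(V_{2k})_\lambda$'' in the statement is literally the highest-weight-$\lambda$ irreducible $\sl(2k)$-module inside the $\sl(\infty)$-module $V_\lambda$, after which the $\sl(\infty)$ argument applies verbatim. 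No deep input is needed beyond \cite{PStyr} (for the identification of $\tilde V_\lambda$ and its socle) and the elementary representation theory of the classical chains.
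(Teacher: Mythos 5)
Your treatment of the case $\gg=\so(\infty)$, $\sp(\infty)$ contains a genuine error, not just bookkeeping. The lemma asserts (and its application in the proof of Theorem~\ref{simples} requires) that $(V_{2k})_\lambda$ generates $\tilde V_\lambda$ \emph{as a $\gg$-module}, where $\gg$ is a proper subalgebra of $\sl(V)=\sl(\infty)$ under the auxiliary embedding. You reduce to generation as an $\sl(\infty)$-module, but that is strictly weaker -- indeed it is vacuous, since $\tilde V_\lambda=V_\lambda$ is $\sl(\infty)$-irreducible -- and your ``a fortiori'' runs in the wrong direction: $\gg$ does \emph{not} contain $\sl(V_{2k})$, nor the chain of subalgebras $\sl(V_N)$ (also note $\dim V_{2k}$ is about $4k$, not $2k$, since $V_n$ is the natural module of $\gg_n=\so(2n)$, $\so(2n+1)$, $\sp(2n)$). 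The entire content of this half of the lemma is the $GL\downarrow O/Sp$ branching: $\tilde V_\lambda$ has many simple $\gg$-constituents, and one must check that all of them are already seen inside $(V_{2k})_\lambda$ restricted to $\gg_{2k}$; this is why the paper needs the index $2k$ (a stability bound for the Littlewood restriction rules) and cites Theorems 3.3 and 4.3 of \cite{PStyr}, rather than just $2k\geq\#\lambda$.

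In the $\sl(\infty)$ case your key inductive step -- that $(V_n)_{\lambda^1}\otimes(V_n^*)_{\lambda^2}$ generates $(V_{n+1})_{\lambda^1}\otimes(V_{n+1}^*)_{\lambda^2}$ over $\gg_{n+1}$ -- is asserted, not proved, and it does not follow from generation of each tensor factor separately: a tensor product of generating subspaces need not generate the tensor product (for $\sl(2)$, the line spanned by $f\otimes f$, with $f$ a lowest weight vector, generates only $S^2(V)\subsetneq V\otimes V$). Your highest-weight-vector argument only produces the Cartan component of $(V_{n+1})_{\lambda^1}\otimes(V_{n+1}^*)_{\lambda^2}$, whereas the whole point of the lemma is that the constituents of $\tilde V_\lambda$ arising from contractions (the deeper socle layers, and the isotypic multiplicities) are also reached from the finite-rank piece; your sketch never addresses these. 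The paper's proof supplies exactly this missing ingredient by a length comparison: for $n\geq k$ (resp.\ $n\geq 2k$) the length of $M_n:=(V_n)_{\lambda^1}\otimes(V_n^*)_{\lambda^2}$ (resp.\ $(V_n)_\lambda$) as a $\gg_n$-module stabilizes and coincides with the length of $\tilde V_\lambda$ as a $\gg$-module, the latter computed from Theorem 2.3 (resp.\ 3.3, 4.3) of \cite{PStyr}, whence generation. To repair your induction you would need an analogous counting or branching argument (e.g.\ stable Littlewood--Richardson and Littlewood restriction rules showing that every simple $\gg_{n+1}$-quotient of $M_{n+1}$ restricts nontrivially to $M_n$); as written, the central step is a gap and the orthogonal/symplectic case proves the wrong statement.
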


\begin{proof}
Let $\gg=\sl(\infty)$.  Then $M =V_{\lambda^1}\otimes (V_*)_{\lambda^2}$, 
and let $M_n:=(V_n)_{\lambda^1} \otimes (V_n^*)_{\lambda^2}$. It is easy to check that the length of $M_n$ as a $\gg_n$-module stabilizes for $n \geq k$, and moreover it coincides with the length of $M$; a formula for the length of $M$ is implied by \cite[Theorem 2.3]{PStyr}.  Hence $(V_k)_{\lambda^1} \otimes (V_k^*)_{\lambda^2}$ generates $M$.

For $\gg = \so(\infty)$, $\sp(\infty)$, one has
$M=\tilde{V}_{\lambda}$. Again, the length of $M_n:= (V_n)_\lambda$ equals the length of $M$ if $n\geq 2k$ (see Theorems 3.3 and 4.3 in \cite{PStyr}). Hence $(V_{2k})_\lambda$ generates $M$.
\end{proof}

\begin{thm} \label{simples}
A simple absolute weight $\gg$-module is a simple tensor module.
\end{thm}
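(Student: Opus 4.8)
My plan is to deduce the statement from the construction of a single nonzero $\gg$-module homomorphism $\tilde V_\lambda \to M$ for a suitable $\lambda$ (a pair of partitions when $\gg=\sl(\infty)$, a single partition when $\gg=\so(\infty),\sp(\infty)$). Granting such a map, simplicity of $M$ makes it surjective, so $M$ is a simple quotient of $\tilde V_\lambda$. Now $\tilde V_\lambda$ is a direct summand of $T^{|\lambda^1|,|\lambda^2|}$ (resp.\ of $T^{|\lambda|}$), cut out by a Young symmetrizer, and hence a tensor module; and by the results recalled above from \cite{PStyr} every simple subquotient of a tensor module is isomorphic to some $V_\mu$. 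Therefore $M\cong V_\mu$ is a simple tensor module.

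To produce $\lambda$ and the map, I would first recall that a simple absolute weight module $M$ is integrable (Proposition~\ref{int}) and has all annihilators $\Ann_\gg m$ of finite corank (Theorem~\ref{th1}). Fixing $0\neq m\in M$, choose $n$ with $\gg'_n\subseteq\Ann_\gg m$, where $\gg'_n$ is the commutator subalgebra of the centralizer of $\gg_n$ in $\gg$, as in the proof of Theorem~\ref{th1}. Then $M^{\gg'_n}\neq 0$, and since $\gg'_n$ centralizes $\gg_n$ this is a $\gg_n$-submodule of $M$; pick an irreducible $\gg_n$-submodule $N\subseteq M^{\gg'_n}$. Enlarging $n$ if necessary, $N$ is a $\gg_n$-direct summand of $(V_n)_{\lambda^1}\otimes(V_n^*)_{\lambda^2}\subseteq\tilde V_\lambda$ for appropriate $\lambda=(\lambda^1,\lambda^2)$ with $\#\lambda\le n$ (resp.\ of $(V_n)_\lambda\subseteq\tilde V_\lambda$ with $2\#\lambda\le n$, using that a finitary $\so(\infty)$- or $\sp(\infty)$-module has no spinorial $\gg_n$-constituents). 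Because $M$ is simple, $N$ generates $M$ over $\gg$; on the other hand the same finite-dimensional module generates $\tilde V_\lambda$ over $\gg$ by Lemma~\ref{nonzeromarks}. Composing a $\gg_n$-projection with the inclusion $N\hookrightarrow M$ gives a nonzero $\gg_n$-homomorphism $f\colon (V_n)_{\lambda^1}\otimes(V_n^*)_{\lambda^2}\to M$ whose image lies in $M^{\gg'_n}$.

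The heart of the proof is to extend $f$ to a $\gg$-homomorphism $\tilde V_\lambda\to M$; since the source is generated over $\gg$ by $(V_n)_{\lambda^1}\otimes(V_n^*)_{\lambda^2}$, such an extension is unique, so only existence is at issue. I would build it as a direct limit of $\gg_{n'}$-homomorphisms $f_{n'}\colon (V_{n'})_{\lambda^1}\otimes(V_{n'}^*)_{\lambda^2}\to M$, $n'\geq n$, constructed by induction on $n'$ with $f_n:=f$. In the inductive step the image of $f_{n'}$ is $U(\gg_{n'})N$, a finite-dimensional $\gg_{n'}$-module (by integrability) which is still annihilated by $\gg'_{n'}$; hence $U(\gg_{n'+1})N\subseteq M$ is a finite-dimensional $\gg_{n'+1}$-module, annihilated by $\gg'_{n'+1}$, generated over $\gg_{n'+1}$ by its $\gg_{n'}$-submodule $(V_{n'})_{\lambda^1}\otimes(V_{n'}^*)_{\lambda^2}$, and the point to be proved is that these constraints force $U(\gg_{n'+1})N$ to be a quotient of $(V_{n'+1})_{\lambda^1}\otimes(V_{n'+1}^*)_{\lambda^2}$, which then supplies $f_{n'+1}$. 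I expect this to be the main obstacle: it is the assertion that no ``unexpected'' $\gg_{n'}$-constituent can survive in $M^{\gg'_{n'}}$, i.e.\ that the finite-dimensional branching of $M$ along the chain $\gg_n\subset\gg_{n+1}\subset\cdots$ must agree level by level with that of $\tilde V_\lambda$. I would attack it by a highest-weight argument, using that every vector of $M^{\gg'_{n'}}$ has $\hh$-weight supported on $\epsilon_1,\dots,\epsilon_{n'}$ (it is a weight vector for the Cartan subalgebra $\hh\cap\gg_{n'}$ of $\gg_{n'}$ and is annihilated by the Cartan subalgebra $\hh\cap\gg'_{n'}$ of $\gg'_{n'}$), together with integrability to bound which such weights can occur, and the classical Littlewood--Richardson rule to identify the module; this is precisely the finite-dimensional shadow of Lemma~\ref{nonzeromarks}. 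In the case $\gg=\so(\infty)$ one must also invoke Corollary~\ref{twoconjugacyclasses}, so that the construction is insensitive to the choice between $\gg_n=\so(2n)$ and $\gg_n=\so(2n+1)$; the case $\gg=\sp(\infty)$ runs in parallel, with single partitions and $T^p=V^{\otimes p}$.
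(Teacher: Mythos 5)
Your overall strategy is legitimate: if you can produce a nonzero $\gg$-homomorphism $\tilde V_\lambda\to M$, then simplicity of $M$ makes $M$ a simple quotient of $\tilde V_\lambda$, hence a simple tensor module. But the proposal never actually proves the step on which everything rests, namely the inductive claim that $U(\gg_{n'+1})N$ is a quotient of $(V_{n'+1})_{\lambda^1}\otimes(V_{n'+1}^*)_{\lambda^2}$, and moreover a quotient \emph{compatibly} with $f_{n'}$ (surjectivity alone does not let you pass to the direct limit; you need the new map to restrict to the old one on the lower level). You yourself label this ``the main obstacle'' and only describe how you would attack it. This step is not a routine verification: it asserts that the branching of the unknown module $M$ along $\gg_n\subset\gg_{n+1}\subset\cdots$ agrees level by level with that of $\tilde V_\lambda$, and it also silently requires the correct choice of $\lambda$ at the start (an arbitrary $\lambda$ for which $N$ occurs in $(V_n)_{\lambda^1}\otimes(V_n^*)_{\lambda^2}$ will not do a priori). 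In addition, the parenthetical claim that a ``finitary $\so(\infty)$- or $\sp(\infty)$-module has no spinorial $\gg_n$-constituents'' is false for integrable modules in general --- the direct limit of the spin representations of $\so(2n+1)$ is integrable --- so excluding spin constituents of $M^{\gg'_n}$ already needs the weight-support and integrality argument you only gesture at; and for $\gg=\sl(\infty)$ the weight of a vector of $M^{\gg'_{n'}}$ is only constant, not zero, on the tail coordinates, so even that support statement requires care. As it stands, the heart of the theorem is deferred, not proved.

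For comparison, the paper avoids level-by-level branching entirely. It introduces a $\ZZ$-grading $\gg=\bigoplus_i\gg^i$ with $\gg^0\simeq gl(k)\supsetplus\gg'$, proves $M^{\gg^+}\neq 0$ (this is where Lemma~\ref{nonzeromarks} and integrability enter, via nilpotent action of a finite-dimensional subspace $W\subset\gg^1$), and observes that $M^{\gg^+}$ is an irreducible $\gg^0$-subquotient of $S(\gg^1)$ --- which at one stroke rules out spin-type constituents and identifies the highest weight data $\lambda$. The conclusion $M\simeq V_\lambda$ then follows because the induced module $U(\gg)\otimes_{U(\gg^0\oplus\gg^+)}M^{\gg^+}$ has a unique simple quotient. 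If you wish to salvage your direct-limit construction, expect the missing finite-dimensional stabilization argument to require work comparable to this parabolic-induction argument; the cleaner route is to control the single ``highest'' piece $M^{\gg^+}$ rather than all the $\gg_{n'}$-modules $U(\gg_{n'})N$ at once.
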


\begin{proof}
Let $M$ be a simple absolute weight $\gg$-module.  Then $M$ is
integrable by Proposition~\ref{int}, and it also satisfies
Theorem~\ref{th1} \eqref{1four}. 
Fix $0\neq m\in M$ and choose $k$ such
that the commutator subalgebra $\gg'$ of the centralizer of $\gg_k$
annihilates $m$.  One checks immediately that there is a $\mathbb
Z$-grading $\gg=\bigoplus_i\gg^i$ such that 
$\gg^0 \simeq gl(k) \supsetplus \gg'$ (in the cases $\gg=\so(\infty)$ or
$\sp(\infty)$ this semidirect sum is direct) and 
\begin{align*}
\gg^1 & \simeq  V^*_k \otimes V' \textrm{, } \gg^{-1}\simeq V_k\otimes V'_* \textrm{, } 
\gg^i =0\text{ if } |i|>1 &\textrm{for }\gg=\sl(\infty),\\
\gg^{2} & \simeq\Lambda^2(W_k)\textrm{, } 
\gg^{1} \simeq W_k\otimes V' \textrm{, } 
\gg^{-1} \simeq W^*_k\otimes V' \textrm{, } 
\gg^{-2} \simeq \Lambda^2(W^*_k), & \\
\gg^i &=0 \text{ if } |i|>2 &\textrm{for }\gg=\so(\infty),\\
\gg^{2} & \simeq S^2(W_k) \textrm{, } 
\gg^{1} \simeq W_k\otimes V' \textrm{, } 
\gg^{-1}  \simeq W^*_k\otimes V' \textrm{, } 
\gg^{-2} \simeq S^2(W^*_k), & \\
\gg^i & =0 \textrm{ if } |i|>2 &\textrm{for } \gg=\sp(\infty).
\end{align*}
Here $V'$ and $V'_*$ stand respectively for the natural and conatural
module of $\gg' \simeq \gg$, and for $\gg = \so(\infty)$,
$\sp(\infty)$, we take $W_k$ and $W_k^*$ to denote $k$-dimensional isotropic subspaces of $V_k$ such that $V_k = W_k \oplus W_k^*$.

Let $\gg^+=\bigoplus_{i>0}\gg^i$. We claim that the $\gg^+$-invariant part of $M$, denoted $M^{\gg^+}$, is nonzero. 
Note first that, since $\gg^2$ acts locally nilpotently on $M$, without loss of generality we may assume that $\gg^2 \cdot m=0$.
Next observe that $U(\gg^+) \cdot m=S(\gg^1) \cdot m$, and
$S(\gg^1)\simeq S(V'\oplus\dots\oplus V') \simeq S(V')^{\otimes k}$ is isomorphic as a $\gg'$-module to a direct sum of
$\tilde V_{\lambda}$ for some set of $\lambda$ satisfying
$\#\lambda\leq k$ for $\gg=\sl(\infty)$ 
and $\# \lambda \leq 2k$ for $\gg=\so(\infty)$ or $\sp(\infty)$.
By Lemma~\ref{nonzeromarks}, there exists a finite-dimensional subspace $W\subset \gg^1$ such that
$S(W)$ generates $S(\gg^1)$ as a $\gg'$-module. 
If  $\gg=\sl(\infty)$ one can take $W= V^*_k\otimes V'_k$, if
$\gg=\so(\infty)$ or $\sp(\infty)$ one can take $W=W_k\otimes V'_{2k}$.
Since $M$ is
integrable, $W$ acts nilpotently on $M$. Hence $S^{>p }(W) \cdot m=0$ for some
$p$. Then  $S^{>p }(\gg^1) \cdot m=0$, and the latter implies $M^{\gg^+}\neq 0$.

It is easy to see that the irreducibility of $M$ implies the
irreducibility of $M^{\gg^+}$ as a $\gg^0$-module.  
Moreover, the $\gg^0$-module $M^{\gg^+}$ is isomorphic to a
subquotient of $S (\gg^1)$.

If $\gg=\so(\infty)$ or $\sp(\infty)$, then $M^{\gg^+}\simeq W_{\mu}\otimes V'_{\nu}$,
where $W_{\mu}$ is a simple $gl(k)$-module with highest weight $\mu=(\mu_1,\dots,\mu_k)$.
It is easy to check that integrability of $M$ implies that $\mu_k$ is an integer and
$\mu_k\geq\nu_1$. Let
$\lambda :=(\mu_1,\dots,\mu_k,\nu_1,\nu_2,\dots)$. Then
$V_\lambda^{\gg^+}\simeq  M^{\gg^+}$, and this yields  
$M\simeq V_\lambda$ since  the induced module $U(\gg) \otimes_{U(\gg^0  \oplus\gg^+)} M^{\gg^+}$ has a unique simple quotient.

Let $\gg=\sl(\infty)$. Then $M^{\gg^+}$ is the restriction of $W_\mu\otimes V'_{\nu}$
to $\gg^0 \subset gl(k)\oplus gl(\infty)$. The integrability of $M$ implies that each $\mu_i$ is a nonpositive integer. Set
$\lambda :=(\nu,(-\mu_k,\dots,-\mu_1))$. Then we have an isomorphism of $\gg_0$-modules
$V_\lambda^{\gg^+}\simeq  M^{\gg^+}$. 
By the same argument as in the previous case,
we have $M\simeq V_\lambda$.
\end{proof}

\begin{rem}
In \cite{PS} certain categories $\operatorname{Tens}_\gg$ and $\widetilde{\operatorname{Tens}}_\gg$ are introduced and studied in detail.  The simple objects of both $\operatorname{Tens}_\gg$ and $\widetilde{\operatorname{Tens}}_\gg$ are the same as the simple objects of $\Tg$, and in fact these three categories form the following chain:
$$\Tg \subset \operatorname{Tens}_\gg \subset \widetilde{\operatorname{Tens}}_\gg.$$
However, the objects of the categories $\operatorname{Tens}_\gg$ and $\widetilde{\operatorname{Tens}}_\gg$ generally have infinite length.  In the present paper we will not make use of the categories $\operatorname{Tens}_\gg$ and $\widetilde{\operatorname{Tens}}_\gg$, and refer the interested reader to \cite{PS}.
\end{rem}

We now introduce notation $\Gamma^\mathrm{wt}$ for the functor from $\operatorname{Int}_\gg$ to the category of absolute weight modules given by
$$\Gamma^\mathrm{wt} (M) = \bigcap_\hh \Gamma^\mathrm{wt}_\hh (M),$$
where $\hh$ runs over all splitting Cartan subalgebras of $\gg$.

\begin{lemma} \label{rtadjoint}
For any $M \in \operatorname{Int}_\gg$, the module
$\Gamma^\mathrm{wt} (\Gamma_\gg (M^*))$ is injective in the category of absolute weight modules.  Furthermore, any finite length injective module in the category of absolute weight modules is injective in $\Tg$.
\end{lemma}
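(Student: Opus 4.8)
The plan is to prove the two assertions separately: the first by reducing it to a cohomological vanishing statement, the second by a soft argument about subcategories. Write $\mathcal{C}$ for the category of absolute weight $\gg$-modules, so that $\Tg$ is a full subcategory of $\mathcal{C}$ and, by Proposition~\ref{int}, $\mathcal{C}$ is a full subcategory of $\operatorname{Int}_\gg$. First I would record that $\Gamma_\gg$ is right adjoint to the inclusion $\operatorname{Int}_\gg \hookrightarrow \gg$-mod, and that $\Gamma^\mathrm{wt}$ sends a module to its largest absolute weight submodule: indeed $\Gamma^\mathrm{wt}(M)=\bigcap_\hh \Gamma^\mathrm{wt}_\hh(M)$ is an absolute weight module, because submodules of weight modules are weight modules, and it clearly contains every absolute weight submodule of $M$; hence $\Gamma^\mathrm{wt}$ is right adjoint to the inclusion $\mathcal{C}\hookrightarrow\operatorname{Int}_\gg$. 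Both inclusions are exact, since the subcategories are closed under subobjects and quotients, so each of $\Gamma_\gg$ and $\Gamma^\mathrm{wt}$ preserves injectives. It therefore suffices to prove that $\Gamma_\gg(M^*)$ is injective in $\operatorname{Int}_\gg$.

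The heart of the matter is the claim that $H^1(\gg, Y^*)=0$ for every integrable $\gg$-module $Y$. Since $Y$ is integrable, $Y^*|_{\gg_n}$ is a direct product of finite-dimensional $\gg_n$-modules; as $\gg_n$ is finite-dimensional the functor $H^1(\gg_n,-)$ commutes with products, so $H^1(\gg_n, Y^*)=0$ by Whitehead's first lemma. Thus any $1$-cocycle $c:\gg\to Y^*$ satisfies $c|_{\gg_n}=\partial\eta_n$ for some $\eta_n\in Y^*$, and for $m\leq n$ the element $\eta_n-\eta_m$ is annihilated by $\gg_m$, i.e.\ it vanishes on $\gg_m\cdot Y$. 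Hence the $\eta_n$ agree on $\bigcup_n \gg_n\cdot Y=\gg\cdot Y$, glue to a linear functional there, and any extension $\eta\in Y^*$ of that functional satisfies $\partial\eta=c$; so $H^1(\gg,Y^*)=0$. Now take a monomorphism $N'\hookrightarrow N$ in $\operatorname{Int}_\gg$ with cokernel $N''$. Applying $\otimes\, M$ and then $\operatorname{Hom}_\CC(-,\CC)$ gives a short exact sequence of $\gg$-modules $0\to (N''\otimes M)^*\to (N\otimes M)^*\to (N'\otimes M)^*\to 0$; passing to $\gg$-invariants and using $\operatorname{Hom}_\gg(X,M^*)=\operatorname{Hom}_\gg(X\otimes M,\CC)=\big((X\otimes M)^*\big)^\gg$, we find that the cokernel of $\operatorname{Hom}_\gg(N,M^*)\to\operatorname{Hom}_\gg(N',M^*)$ embeds into $H^1(\gg,(N''\otimes M)^*)$, which vanishes since $N''\otimes M$ is integrable. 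So every $\gg$-homomorphism $N'\to M^*$ extends to $N$; as the image of such an extension is integrable, hence contained in $\Gamma_\gg(M^*)$, this shows $\Gamma_\gg(M^*)$ is injective in $\operatorname{Int}_\gg$, and the first assertion follows.

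For the second assertion, $\Tg$ is a full subcategory of $\mathcal{C}$, and a submodule of a finite length absolute weight module is again of finite length and an absolute weight module, so $\Tg$ is closed under subobjects in $\mathcal{C}$. Consequently a monomorphism in $\Tg$ is a monomorphism in $\mathcal{C}$, so any object of $\Tg$ that is injective in $\mathcal{C}$ is injective in $\Tg$; and a finite length injective object of $\mathcal{C}$ lies in $\Tg$ by definition, which gives the claim.

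The main obstacle is the vanishing $H^1(\gg, Y^*)=0$ for integrable $Y$. The delicate point is that $Y^*$ is itself typically not integrable, so no general vanishing statement applies to it directly; the argument must genuinely use the exhaustion $\gg=\bigcup_n\gg_n$, apply Whitehead's lemma at finite level, and then glue the resulting trivializations, which is precisely why one is forced to work on $\gg\cdot Y$ rather than on all of $Y$. Everything else — the adjunction bookkeeping in the reduction and the closure properties used for $\Tg$ and $\mathcal{C}$ — is routine.
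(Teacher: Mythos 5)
Your argument is correct, and its skeleton is the same as the paper's: you show $\Gamma^\mathrm{wt}$ is right adjoint to the exact inclusion of the category of absolute weight modules into $\operatorname{Int}_\gg$ (so it preserves injectives), reduce the first assertion to the injectivity of $\Gamma_\gg(M^*)$ in $\operatorname{Int}_\gg$, and dispose of the second assertion by the closure-under-subobjects remark that the paper leaves as ``clear''. Where you genuinely diverge is the key input: the paper simply quotes \cite[Proposition 3.2]{PS} for the injectivity of $\Gamma_\gg(M^*)$, whereas you reprove it from scratch by establishing $H^1(\gg,Y^*)=0$ for every integrable $Y$ (Whitehead's lemma over each $\gg_n$ applied to the product decomposition of $Y^*|_{\gg_n}$, then gluing the trivializing functionals along $\gg\cdot Y$), and then feeding this into the long exact cohomology sequence for $0\to(N''\otimes M)^*\to(N\otimes M)^*\to(N'\otimes M)^*\to 0$ via the identification $\operatorname{Hom}_\gg(X,M^*)\simeq\big((X\otimes M)^*\big)^\gg$. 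This buys self-containedness (no reliance on \cite{PS}) at the cost of length; it is in substance a proof of the cited result. Two minor points if you keep your version: you implicitly use that integrability (local finiteness of every element) makes $Y|_{\gg_n}$ a direct sum of finite-dimensional modules, an equivalence the paper and \cite{PS} use freely but which deserves a word; and your claim that $\Gamma_\gg$ preserves injectives is never needed --- what you actually use, and prove directly, is that $\Gamma_\gg(M^*)$ is injective in $\operatorname{Int}_\gg$.
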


\begin{proof}
$\Gamma^\mathrm{wt}$ is a right adjoint to the inclusion functor from the category of absolute weight modules to $\operatorname{Int}_\gg$.  To see this, consider that the image of any homomorphism from an absolute weight module to a module $Y \in \operatorname{Int}_\gg$ is automatically contained in $\Gamma^\mathrm{wt} (Y)$.
Since right adjoint functors take injective modules to injective modules, the lemma follows from the fact that $\Gamma_\gg (M^*)$ is injective for any integrable $\gg$-module $M$, which is \cite[Proposition 3.2]{PS}.  The second statement is clear.
\end{proof}

\begin{prop}
For each $r$, the module $T^{\leq r}$ is injective in the category of absolute weight modules and in $\Tg$.
\end{prop}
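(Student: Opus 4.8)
The plan is to deduce injectivity from Lemma~\ref{rtadjoint} by exhibiting $T^{\le r}$ as a direct summand of $\Gamma^{\mathrm{wt}}(\Gamma_\gg(M^*))$ for $M=T^{\le r}$; since a direct summand of an injective module is injective, this gives injectivity of $T^{\le r}$ in the category of absolute weight modules, and then, $T^{\le r}$ having finite length, the second part of Lemma~\ref{rtadjoint} yields injectivity in $\Tg$.

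First I would check that $T^{\le r}$ is itself an object of $\Tg$: it has finite length by \cite{PStyr}, it is integrable, and every $m\in T^{\le r}$ lies in $\bigoplus_{p+q\le r}V_n^{\otimes p}\otimes(V_n^*)^{\otimes q}$ (resp.\ $\bigoplus_{p\le r}V_n^{\otimes p}$) for some $n$, hence is annihilated by the commutator subalgebra $\gg'_n$ of the centralizer of $\gg_n$, which is of finite corank; so condition~\eqref{1four} of Theorem~\ref{th1} holds and $T^{\le r}$ is an absolute weight module. Next, the $\gg$-invariant contraction pairing $T^{p,q}\times T^{q,p}\to\CC$ --- using, for $\gg=\so(\infty),\sp(\infty)$, the invariant bilinear form on $V$, which identifies $V$ with $V_*$ --- is nondegenerate and yields a $\gg$-module embedding $\iota\colon T^{\le r}\hookrightarrow(T^{\le r})^*$ onto the span of the ``dual monomials''; because $T^{\le r}$ is integrable and an absolute weight module, $\iota$ factors through $\Gamma^{\mathrm{wt}}(\Gamma_\gg((T^{\le r})^*))$.

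The substantive part is to show that $\iota(T^{\le r})$ is a direct summand of $\Gamma^{\mathrm{wt}}(\Gamma_\gg((T^{\le r})^*))$. Here I would first observe that $(T^{\le r})^*$ is already integrable, so $\Gamma_\gg$ does nothing: restricted to any $\gg_n$ the module $T^{\le r}$ is a direct sum of copies of finitely many fixed finite-dimensional $\gg_n$-modules, so each $g\in\gg_n$ satisfies a fixed polynomial as an operator on $T^{\le r}$ and hence on $(T^{\le r})^*$. I would then identify $\Gamma^{\mathrm{wt}}((T^{\le r})^*)$ as $\iota(T^{\le r})$ together with the ``contraction terms'' --- the images under the maps dual to the contractions $T^{p,q}\to T^{p-1,q-1}$ (resp.\ $V^{\otimes p}\to V^{\otimes(p-2)}$) of the corresponding lower modules $\Gamma^{\mathrm{wt}}(\Gamma_\gg((T^{p-1,q-1})^*))$ --- and check that these are submodules meeting $\iota(T^{\le r})$ trivially which, together with $\iota(T^{\le r})$, span $\Gamma^{\mathrm{wt}}((T^{\le r})^*)$, giving the direct sum decomposition. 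The one genuinely hard point is the spanning claim: a functional $f\in(T^{\le r})^*$ which is a weight vector for the splitting Cartan subalgebra $\hh$ with $\hh\cap\gg_n$ a Cartan subalgebra of $\gg_n$ is a (possibly infinite) combination of dual monomials of a single $\hh$-weight, and requiring $f$ to be a weight vector also for the splitting Cartan subalgebras obtained --- exactly as in the proof of Theorem~\ref{th1} --- by replacing an infinite commuting family $\{h_i\}$ of toral generators by $\{h_i+e_i\}$ and by $\{h_i+f_i\}$ forces, via local nilpotence of the $e_i$, all but ``eventually constant'' coefficients of $f$ to vanish, leaving precisely $\iota(T^{\le r})$ plus the contraction terms. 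This has to be carried out in each of the cases $\gg=\sl(\infty),\so(\infty),\sp(\infty)$ with the explicit root data of Section~\ref{prelim}, and for $\so(\infty)$ one invokes in addition Corollary~\ref{twoconjugacyclasses}.

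Granting this, $T^{\le r}\cong\iota(T^{\le r})$ is a direct summand of the module $\Gamma^{\mathrm{wt}}(\Gamma_\gg((T^{\le r})^*))$, which is injective in the category of absolute weight modules by Lemma~\ref{rtadjoint}; hence $T^{\le r}$ is injective there, and, being of finite length, also in $\Tg$. I expect the main obstacle to be exactly the spanning claim above: showing that $(T^{\le r})^*$ carries no absolute weight vectors beyond $\iota(T^{\le r})$ and the contraction terms. This is where exotic, non-tensor, weight functionals must be excluded, and it is precisely here that the availability of several non-conjugate splitting Cartan subalgebras --- the mechanism underlying the definition of $\Tg$ --- is indispensable.
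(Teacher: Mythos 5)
Your overall architecture is the same as the paper's: embed $T^{\le r}$ (with the indices transposed) into the algebraic dual via the contraction pairing, check integrability of the dual so that Lemma~\ref{rtadjoint} applies, and exhibit $T^{p,q}$ as a direct summand through a decomposition of $\Gamma^{\mathrm{wt}}\big((T^{q,p})^*\big)$ into $T^{p,q}$ plus images of the maps $\iota_{rs}$ inserting $\mathrm{Id}$; your trivial-intersection observation and the preliminary checks are fine. The genuine gap is in the mechanism you propose for the spanning claim. You require a functional $f\in\Gamma^{\mathrm{wt}}\big((T^{\le r})^*\big)$ which is an $\hh$-weight vector to be \emph{a weight vector} also for the modified Cartan subalgebras $\hh'$ containing the elements $h_i+e_i$, and then apply the eigenvector-plus-local-nilpotence trick to kill coefficients. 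But membership in $\Gamma^{\mathrm{wt}}_{\hh'}$ only means that $f$ is a \emph{finite sum} of $\hh'$-weight vectors, not that $f$ itself is one; the simultaneous-weight-vector hypothesis is much stronger and typically fails: already $e_1\otimes e_2^*\in T^{1,1}\subset\Gamma^{\mathrm{wt}}\big((T^{1,1})^*\big)$ is an $\hh$-weight vector but not an $\hh'$-weight vector. Indeed, if your hypothesis were available, the same argument would give $e_\alpha\cdot f=f_\alpha\cdot f=0$ for all simple roots $\alpha$ occurring in the commuting families, forcing every such $f$ to be $\gg$-invariant, which is absurd. So as written the key step does not show that $\Gamma^{\mathrm{wt}}$ is contained in $T^{p,q}$ plus the contraction terms.

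For comparison, the paper converts the absolute weight condition into something usable in two steps that your sketch has no substitute for. First, using the finite exhaustive socle filtration of $(T^{q,p})^*$ from \cite{PS} and Theorem~\ref{th1}\eqref{1four} applied to the finite length absolute weight constituents of the socle layers, it shows that every element of $\Gamma^{\mathrm{wt}}\big((T^{q,p})^*\big)$ is annihilated by some finite corank subalgebra $\gg'_n$; this is where the existence of many splitting Cartan subalgebras is exploited, but via the $U(\gg_n)$-generation argument inside the proof of Theorem~\ref{th1}, not by treating single vectors as simultaneous weight vectors. Second, it computes $\big((T^{q,p})^*\big)^{\gg'_n}$ explicitly, using the decomposition of $V$ over $\gg'_n$ into natural plus trivial summands together with the fact (from \cite{PStyr}, dualized via Lemma~6.6 of \cite{PS}) that the $\gg$-invariant functionals on $T^{q,p}$ are spanned by products of copies of $\mathrm{Id}$; this structural input is what ultimately yields the ``eventually constant coefficients'' conclusion. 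To repair your argument you would either have to reproduce this two-step reduction or find another honest way to control functionals that merely lie in a finite sum of $\hh'$-weight spaces for every splitting Cartan subalgebra $\hh'$.
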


\begin{proof}
We consider the case $\gg = \sl(\infty)$, and note that the other cases are similar.  We will show that $\Gamma^\mathrm{wt} \big( (T^{q,p})^* \big)$ is injective in the category of absolute weight modules, and furthermore that it has a direct summand isomorphic to $T^{p,q}$.  Since any direct summand of an injective module is itself injective, we see immediately that $T^{p,q}$ is injective in the category of absolute weight modules.  As $T^{p,q}$ has finite length, it is also injective in the category $\Tg$.

It is clear that for each $i > 0$, one has $\operatorname{Hom}_{\gg_i} (N, T^{q,p}) \neq 0$ for only finitely many non-isomorphic simple $\gg_i$-modules $N$.  Thus by \cite[Lemma 4.1]{PS} its algebraic dual $(T^{q,p})^*$ is integrable.  That is $\Gamma_\gg \big((T^{q,p})^*\big) = (T^{q,p})^*$, so Lemma~\ref{rtadjoint} implies that $\Gamma^\mathrm{wt} \big((T^{q,p})^* \big)$ is injective in the category of absolute weight modules.

It remains to show that $T^{p,q}$ is a direct summand of $\Gamma^\mathrm{wt} \big( (T^{q,p})^* \big)$.  One may check that $\Gamma^\mathrm{wt} \big( (T^{0,p})^* \big) = T^{p,0}$ and $\Gamma^\mathrm{wt} \big( (T^{q,0})^* \big) = T^{0,q}$.  Thus it remains to consider the case that $p$ and $q$ are both at least $1$.
 
Fix a splitting Cartan subalgebra $\hh \subset \gg$ compatible with the exhaustion of $\gg$, and let $\{ e_i \mid i \in \ZZ_{>0} \}$ and $\{ e_i^* \mid i \in \ZZ_{>0} \}$ be the dual bases of $V$ and $V_*$ associated to $\hh$ and such that $V_n$ is spanned by $\{ e_1 , \ldots e_n \}$.  The space $(T^{q,p})^*$ is a completion of $T^{p,q}$ in the sense that any element of $(T^{q,p})^*$ can be expressed (uniquely) as a formal sum
\begin{equation} \label{formalsum}
\sum_{i_1, \ldots, i_{p+q}} a_{i_1, \ldots, i_{p+q}} e_{i_1} \otimes \cdots \otimes e_{i_p} \otimes e_{i_{p+1}}^* \otimes \cdots \otimes e_{i_{p+q}}^*.
\end{equation}

Note that $\mathrm{Id} \in (T^{1,1})^*$.  Thus we may define a linear map
\begin{align*}
\iota_{p1} : (T^{q-1,p-1})^* & \rightarrow (T^{q,p})^* \\
e_{i_1} \otimes \cdots \otimes e_{i_{p-1}} \otimes e_{i_p}^* \otimes \cdots \otimes e_{i_{p+q-2}}^* & \mapsto e_{i_1} \otimes \cdots \otimes e_{i_{p-1}} \otimes \mathrm{Id} \otimes e_{i_p}^* \otimes \cdots \otimes e_{i_{p+q-2}}^*.
\intertext{and define analogously}
\iota_{rs} : (T^{q-1,p-1})^* & \rightarrow (T^{q,p})^* \\
\end{align*}
for $1\leq r \leq p$ and $1 \leq s \leq q$.  Then $\iota_{rs} \big( \Gamma^\mathrm{wt} \big((T^{q-1,p-1})^* \big) \big) \subset \Gamma^\mathrm{wt} \big((T^{q,p})^* \big) $ for each $1\leq r \leq p$ and $1 \leq s \leq q$.

We claim that
\begin{equation} \label{decomp}
\Gamma^\mathrm{wt} \big((T^{q,p})^* \big) = T^{p,q} \oplus \sum_{\substack{1\leq r \leq p \\ 1 \leq s \leq q}} \iota_{rs} \big( \Gamma^\mathrm{wt} \big((T^{q-1,p-1})^* \big) \big).
\end{equation}
That the intersection of $T^{p,q}$ with the submodule $$\sum_{\substack{1\leq r \leq p \\ 1 \leq s \leq q}} \iota_{rs} \big( \Gamma^\mathrm{wt} \big((T^{q-1,p-1})^* \big) \big)$$ is trivial follows from the observation that every nonzero element of the latter submodule has infinitely many nonzero coefficients when written in the form \eqref{formalsum}.  Hence to show \eqref{decomp} it remains to show only that the left-hand side is contained in the right-hand side.

Let $\mathcal B (M):=\bigcup_{n}M^{\gg'_n}$, where $\gg'_n \simeq \gg$ is the commutator subalgebra of the centralizer of $\gg_n$.  Recall from \cite{PS} that 
$(T^{q,p})^*$ has finite exhaustive socle filtration. By Theorem \ref{th1} any vector $m \in \soc \big( \Gamma^\mathrm{wt}\big((T^{q,p})^* \big) \big)$ is
annihilated by some $\gg'_n$. It follows by induction on the length of
the socle filtration that any $m\in \Gamma^\mathrm{wt}\big((T^{q,p})^* \big)$
is annihilated by some $\gg'_n$. Hence $\Gamma^\mathrm{wt}\big((T^{q,p})^* \big)\subset \mathcal B\big((T^{q,p})^* \big)$.

From \cite{PStyr} it follows that $T^{q,p}$ has the trivial module as a quotient only if $p = q$, in which case it must be the image of some $p$-fold contraction.  Since $T^{q,p}$ has finite length, we can use Lemma 6.6 from \cite{PS} to pass to the dual and deduce that the $\gg$-invariants of $(T^{q,p})^*$ are nonzero only if $p = q$, in which case they are the span of the $p$-fold tensor powers of $\mathrm{Id} \in (T^{1,1})^*$.  Observe that $V$ considered as a $\gg'_n$-module decomposes into the direct sum of the natural $\gg'_n$-module and the trivial module $V_n$, while $V_*$ has a similar decomposition.  Therefore for each $n$ we have 
\begin{equation}\label{aux1}
\big((T^{q,p})^*\big)^{\gg'_n}=\sum_{j=0}^{\mathrm{min}(p,q)}\sum\iota'_{r_1s_1}\cdots\iota'_{r_js_j}m_j(r_1,s_1,\dots,r_j,s_j),
\end{equation}
where 
$m_j(r_1,s_1,\dots,r_j,s_j)\in V_n^{\otimes p-j}\otimes
(V_n^*)^{\otimes q-j}$ and $\iota'$ are analogs of $\iota$ for
$\mathrm{Id}' := \sum_{i >n} e_i \otimes e_i^*$. But $\iota'_{rs}(m)-\iota_{rs}(m)\in  V_n^{\otimes p-j+1}\otimes
(V_n^*)^{\otimes q-j+1}$ for any
$m \in  V_n^{\otimes p-j}\otimes (V_n^*)^{\otimes q-j}$. Hence we can use $\iota$ instead of
$\iota'$ in the right hand side of (\ref{aux1}). This implies
$$\big((T^{q,p})^*\big)^{\gg'_n}=(T^{p,q})^{\gg'_n}\oplus \sum_{\substack{1\leq r \leq p \\ 1 \leq s \leq q}} \iota_{rs}\big((T^{q-1,p-1})^*\big)^{\gg'_n},$$
and in turn
$$\mathcal B \big((T^{q,p})^*\big)=T^{p,q}\oplus \sum_{\substack{1\leq    r \leq p \\ 1 \leq s \leq q}} \iota_{rs}\mathcal B\big((T^{q-1,p-1})^*\big).$$ 
Hence \eqref{decomp} holds.
\end{proof}

\begin{cor}\label{newlab}
\begin{enumerate}
\item $\tilde V_\lambda$ is injective in $\Tg$.
\item $\tilde V_\lambda$ is an injective hull of $V_\lambda$ in $\Tg$.
\item Every indecomposable injective module in $\Tg$ is isomorphic to $\tilde V_\lambda$ for some $\lambda$.
\item \label{cor4} Every module $M \in \Tg$ is isomorphic to a submodule of the direct sum of finitely many copies of $T^{\leq r}$ for some $r$.
\item A $\gg$-module $M$ is a tensor module if and only if $M \in \Tg$.
\end{enumerate}
\end{cor}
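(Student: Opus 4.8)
The plan is to prove Corollary~\ref{newlab} by bootstrapping from the preceding Proposition (injectivity of $T^{\leq r}$) together with Theorem~\ref{simples} (every simple object of $\Tg$ is a simple tensor module $V_\lambda$) and the structure results (\ref{triangle}), (\ref{nabla}) imported from \cite{PStyr}. I would prove the five assertions essentially in the order given, since each later one feeds on the earlier ones.

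\medskip

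First, for (1): by (\ref{triangle}) the module $\tilde V_\lambda$ is a direct summand of $T$, and more precisely of $T^{\leq r}$ for $r=|\lambda|$ (for $\gg=\sl(\infty)$, $r = |\lambda^1|+|\lambda^2|$), because the multiplicity space decomposition of $T^{\leq r}$ contains $\tilde V_\lambda$ with positive multiplicity $d_\lambda$. Since $T^{\leq r}$ is injective in $\Tg$ by the Proposition, and a direct summand of an injective is injective, $\tilde V_\lambda$ is injective in $\Tg$. (One must first note $\tilde V_\lambda \in \Tg$: it has finite length by \cite{PStyr}, and being a submodule of $T$ it is a weight module for the standard splitting Cartan subalgebra and is $\tilde G$-invariant, so Theorem~\ref{th1}\eqref{1three} applies — alternatively it is visibly a tensor module and one could invoke (5), but to avoid circularity I would check \eqref{1three} directly here.) For (2): by (\ref{nabla}) we have $\soc \tilde V_\lambda = V_\lambda$, which is simple, so $\tilde V_\lambda$ is an essential extension of $V_\lambda$; being also injective, it is an injective hull of $V_\lambda$ in $\Tg$.

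\medskip

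For (3): let $J$ be an indecomposable injective in $\Tg$. Since $J$ has finite length it has nonzero socle, and since $\Tg$ is abelian with simple objects exhausted by the $V_\lambda$ (Theorem~\ref{simples}), $\soc J$ contains some $V_\lambda$. The inclusion $V_\lambda \hookrightarrow J$ extends along $V_\lambda \hookrightarrow \tilde V_\lambda$, using injectivity of $J$, to a map $\tilde V_\lambda \to J$; and it extends along $V_\lambda \hookrightarrow J$, using injectivity of $\tilde V_\lambda$, to a map $J \to \tilde V_\lambda$; standard arguments with injective hulls (the composite $\tilde V_\lambda \to J \to \tilde V_\lambda$ restricts to the identity on the essential submodule $V_\lambda$, hence is an automorphism) show $\tilde V_\lambda$ is a direct summand of $J$, and indecomposability of $J$ forces $J \simeq \tilde V_\lambda$. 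For (4): given $M \in \Tg$, consider its injective hull $I(M)$ in $\Tg$ — which exists and has finite length since $M$ does and $\Tg$ has enough injectives (the $T^{\leq r}$ cogenerate, as every simple $V_\lambda$ embeds in $T^{\leq|\lambda|}$). Writing $I(M)$ as a finite direct sum of indecomposable injectives $\tilde V_{\lambda_i}$ by (3), and embedding each $\tilde V_{\lambda_i}$ as a summand of $T^{\leq r_i}$ as in the proof of (1), we get $M \hookrightarrow \bigoplus_i T^{\leq r_i} \hookrightarrow (T^{\leq r})^{\oplus N}$ for $r = \max_i r_i$. Finally (5): if $M \in \Tg$ then by (4) $M$ is a submodule of a finite sum of copies of $T^{\leq r}$, hence a tensor module by definition; conversely if $M$ is a tensor module, it is a subquotient of a finite sum of copies of $T^{\leq r}$, so it has finite length, is a weight module for the standard splitting Cartan subalgebra (all of $T^{\leq r}$ is), and — since $T^{\leq r}$ is $\tilde G$-invariant and $\tilde G$-invariance passes to subquotients in this finite-length weight setting — it satisfies Theorem~\ref{th1}\eqref{1three}, so $M \in \Tg$.

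\medskip

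The main obstacle I anticipate is the converse direction of (5), specifically verifying that an arbitrary \emph{subquotient} of $(T^{\leq r})^{\oplus N}$ — not merely a submodule — lands in $\Tg$. The submodule case is immediate from (4)'s characterization run in reverse, but for a genuine quotient or subquotient one needs that $\tilde G$-invariance descends: twisting by $\gamma \in \tilde G$ is an exact autoequivalence $w_\gg$ of the relevant module category fixing $T^{\leq r}$ up to isomorphism, so it sends any subquotient of $(T^{\leq r})^{\oplus N}$ to another subquotient with the same composition factors; to conclude $M^\gamma \simeq M$ one must argue that a finite-length tensor module is determined up to isomorphism in a way compatible with this — cleanest is to instead show directly that $M$, being a subquotient of an injective object whose indecomposable summands are the $\tilde V_\lambda$, is itself a submodule of some $(T^{\leq r})^{\oplus N}$ (using that $\Tg$-injectives are also injective among tensor modules, via the second sentence of Lemma~\ref{rtadjoint} and the finite socle filtration of $T^{\leq r}$ from \cite{PS}), thereby reducing (5) to (4). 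I would route the argument that way to sidestep the subquotient subtlety entirely.
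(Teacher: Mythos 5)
Items (1)--(4) of your argument are essentially the paper's: $\tilde V_\lambda$ is a direct summand of $T^{\leq r}$, hence injective; $\soc(\tilde V_\lambda)=V_\lambda$ gives the injective hull statement; an indecomposable injective is the hull of its (simple) socle, which by Theorem~\ref{simples} is some $V_\lambda$; and for (4) the paper embeds $\soc(M)$ into $(T^{\leq r})^{\oplus N}$ and extends to $M$ by injectivity, the kernel being killed by essentiality of the socle --- your detour through an injective hull $I(M)$ is a harmless repackaging of the same idea.

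The genuine gap is in the converse direction of (5). Your main-line argument rests on the claim that ``$\tilde G$-invariance passes to subquotients,'' which is exactly the step you cannot justify: an isomorphism $M^\gamma\simeq M$ need not carry a given submodule $N^\gamma$ to $N$, so neither submodules nor quotients of a $\tilde G$-invariant module are obviously $\tilde G$-invariant. Your proposed workaround does not close this: to extend a map $\soc(M)\to (T^{\leq r'})^{\oplus N'}$ over $M$ you need injectivity of $T^{\leq r'}$ in a category already known to contain $M$, but the only injectivity available (the Proposition, and Lemma~\ref{rtadjoint}) is in the category of \emph{absolute weight} modules and in $\Tg$, and whether the tensor module $M$ lies there is precisely what is to be proved; moreover ``reducing (5) to (4)'' presupposes a converse to (4) --- that finite length submodules of $(T^{\leq r})^{\oplus N}$ lie in $\Tg$ --- which again needs an argument. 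The paper's fix is short and you had the tool in hand: use characterization \eqref{1one} of Theorem~\ref{th1} instead of \eqref{1three}. For any splitting Cartan subalgebra $\hh$, an $\hh$-weight module is semisimple over $\hh$, so every subquotient is again an $\hh$-weight module; hence any subquotient of an absolute weight module is an absolute weight module, and a tensor module, having finite length, therefore lies in $\Tg$ with no mention of $\tilde G$-invariance at all. Replacing your final paragraph by this observation repairs the proof.
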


\begin{proof}
\begin{enumerate}
\item Each module $\tilde V_\lambda$ is a direct summand of $T^{\leq r}$ for some $r$, and a direct summand of an injective module is injective.
\item Any indecomposable injective module is an injective hull of its socle, and $\soc(\tilde{V}_\lambda) = V_\lambda$ by \eqref{nabla}.
\item Every indecomposable injective module in $\Tg$ has a simple socle, which must be isomorphic to $V_\lambda$ for some $\lambda$ by Theorem~\ref{simples}.
\item Let $M \in \Tg$.  Then $\soc(M)$ admits an injective homomorphism into a direct sum of finitely many copies of $T^{\leq r}$ for some $r$.  Since the latter is injective in $\Tg$, this homomorphism factors through the inclusion $\soc(M) \hookrightarrow M$.  The resulting homomorphism must be injective because its kernel has trivial intersection with $\soc(M)$.
\item A tensor module is by definition a subquotient of a direct sum of finitely many copies of $T^{\leq r}$ for some $r$, hence it is clearly finite length.  Furthermore, any subquotient of an absolute weight module must be an absolute weight module, so any tensor module must be in $\Tg$.  The converse was seen in \eqref{cor4}. 
\end{enumerate}
\end{proof}

\section{Koszulity of $\Tg$}

For $r \in \ZZ_{\geq 0}$, let $\Tg^r$ be the full abelian subcategory of $\Tg$ whose simple
objects are submodules of $T^{\leq r}$. 
Then $\Tg=\limarr \Tg^r$. Moreover, $T^{\leq r}$ is an injective
cogenerator of $\Tg^r$. 
Consider the finite-dimensional algebra
$\mathcal{A}_\gg^r:=\operatorname{End}_\gg T^{\leq r}$ and 
the direct limit algebra $\mathcal{A}_\gg=\limarr \mathcal{A}_\gg^{r} = \operatorname{End}_\gg T$.

Let $\mathcal{A}_\gg^r$-$\mathrm{mof}$ denote the category of unitary finite-dimensional $\mathcal{A}_\gg^r$-modules, and
$\mathcal{A}_\gg$-$\mathrm{mof}$ the category of locally unitary finite-dimensional $\mathcal{A}_\gg$-modules.

\begin{prop}\label{equiv} 
The functors
  $\operatorname{Hom}_\gg(\, \cdot \,, T^{\leq r})$ and
  $\operatorname{Hom}_{\mathcal A_\gg^r}(\, \cdot \,, T^{\leq r})$ are mutually inverse
  antiequivalences of the categories  $\Tg^r$ and $\mathcal{A}_\gg^r$-$\mathrm{mof}$. 
\end{prop}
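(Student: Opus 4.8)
The plan is to exploit the standard Morita-type correspondence between a finite-length abelian category with enough injectives and the endomorphism algebra of an injective cogenerator, adapted to the present setting where the roles of projectives and injectives are interchanged (hence the antiequivalence rather than equivalence). Let $E := T^{\leq r}$ and $A := \mathcal A_\gg^r = \operatorname{End}_\gg E$. First I would record the structural facts already available: by Corollary~\ref{newlab}, $E$ is injective in $\Tg$, every indecomposable injective of $\Tg^r$ is some $\tilde V_\lambda$ appearing as a direct summand of $E$, and every object of $\Tg^r$ embeds into a finite direct sum of copies of $E$; moreover every object has finite length and $\Tg^r$ has finitely many simple objects (those $V_\lambda$ with $\tilde V_\lambda \mid E$). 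Thus $E$ is an injective cogenerator of $\Tg^r$, and $A$ is a finite-dimensional algebra whose indecomposable projective right modules (or left, depending on conventions) correspond to the indecomposable summands of $E$.

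Next I would set up the two functors $F := \operatorname{Hom}_\gg(\,\cdot\,, E) \colon \Tg^r \to A\text{-}\mathrm{mof}$ and $F' := \operatorname{Hom}_A(\,\cdot\,, E) \colon A\text{-}\mathrm{mof} \to \Tg^r$, checking that they land in the asserted subcategories: for $M \in \Tg^r$ of finite length, $\operatorname{Hom}_\gg(M,E)$ is finite-dimensional (since $M$ embeds in $E^{\oplus N}$ and $\operatorname{Hom}_\gg(E,E)$ is finite-dimensional) and is a unitary $A$-module by functoriality; conversely $\operatorname{Hom}_A(X,E)$ for finite-dimensional $X$ is a $\gg$-module that embeds into a finite power of $E$, hence lies in $\Tg^r$. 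Then I would construct the natural transformations $\mathrm{id}_{\Tg^r} \to F' F$ and $\mathrm{id}_{A\text{-}\mathrm{mof}} \to F F'$ via the canonical evaluation/double-duality maps $M \to \operatorname{Hom}_A(\operatorname{Hom}_\gg(M,E),E)$ and $X \to \operatorname{Hom}_\gg(\operatorname{Hom}_A(X,E),E)$.

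The core of the argument is showing these canonical maps are isomorphisms. The key step is that they are isomorphisms when $M = E$ (resp. $X = A$): indeed $F(E) = \operatorname{End}_\gg E = A$ as a right $A$-module, and $F'(A) = \operatorname{Hom}_A(A,E) = E$, with the canonical maps reducing to identities; by additivity this extends to all finite direct sums of copies of $E$ and of $A$. To pass to arbitrary objects I would use a finite injective copresentation: every $M \in \Tg^r$ embeds in an exact sequence $0 \to M \to E^{a} \to E^{b}$ (using injectivity of $E$ and finite length), apply the left-exact contravariant functors on both sides, and chase the resulting commutative diagram with exact rows together with the five lemma to conclude the canonical map for $M$ is an isomorphism. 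The symmetric argument on the $A$-side uses that every finite-dimensional $A$-module has a finite presentation $A^{b} \to A^{a} \to X \to 0$ by projectives — here I must be careful about sidedness, since $E$ is a $(\gg,A)$-bimodule, so $\operatorname{Hom}_A(\,\cdot\,,E)$ sends left $A$-modules to $\gg$-modules and the relevant projectives are the indecomposable summands of $A$ as a left module, matched against the indecomposable summands of $E$ under $F$.

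The main obstacle I anticipate is the compatibility/exactness bookkeeping rather than anything deep: one must verify that $F$ sends the injective cogenerator $E$ to the \emph{projective generator} $A$ on the correct side and that the indecomposable summands match up bijectively (so that $F$ really does take values in finite-dimensional, as opposed to merely arbitrary, $A$-modules, and that $F'$ is essentially surjective), and one must confirm that $\operatorname{Hom}_\gg(\,\cdot\,,E)$ is exact enough — it is left exact and contravariant, and turns injective copresentations in $\Tg^r$ into projective presentations in $A\text{-}\mathrm{mof}$ — to run the five-lemma diagram chase. Once the case of (sums of) $E$ and of $A$ is settled, everything else is the routine two-out-of-three / five-lemma reduction, so I would present that case carefully and treat the reduction briskly.
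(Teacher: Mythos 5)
Your argument is correct, and it follows a genuinely different route from the paper's. The paper passes to the opposite category $(\Tg^r)^{\mathrm{opp}}$, observes that $T^{\leq r}$ is a projective generator there, and invokes Gabriel's theorem to get that $\operatorname{Hom}_\gg(\,\cdot\,, T^{\leq r})$ is an (anti)equivalence; it then identifies $\operatorname{Hom}_{\mathcal A_\gg^r}(\,\cdot\,, T^{\leq r})$ as the quasi-inverse by a short adjunction computation, writing out explicit hom--tensor maps $\Psi$ and $\Theta$. You instead prove the duality from scratch: you check both functors land in the right categories, reduce the evaluation maps $M \to \operatorname{Hom}_{\mathcal A_\gg^r}(\operatorname{Hom}_\gg(M,T^{\leq r}),T^{\leq r})$ and $X \to \operatorname{Hom}_\gg(\operatorname{Hom}_{\mathcal A_\gg^r}(X,T^{\leq r}),T^{\leq r})$ to the cases $M=T^{\leq r}$ and $X=\mathcal A_\gg^r$ by additivity, and extend via finite injective copresentations (using injectivity of $T^{\leq r}$ to turn them into projective presentations of $\mathcal A_\gg^r$-modules) and the five lemma; in effect you reprove the relevant case of Gabriel's theorem rather than cite it. Both proofs rest on the same structural inputs — $T^{\leq r}$ is an injective cogenerator of $\Tg^r$ with finite-dimensional endomorphism algebra and all objects have finite length — which the paper records just before the proposition and which you correctly extract from Corollary~\ref{newlab} (with the small, easily filled point that an object of $\Tg^r$ embeds into copies of $T^{\leq r}$ for \emph{this} $r$, since its socle consists of simples that are submodules of $T^{\leq r}$). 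What each approach buys: the paper's is shorter and avoids diagram chases by outsourcing the equivalence to a classical result, at the cost of still needing the adjunction bookkeeping to pin down the inverse; yours is self-contained, makes the quasi-inverse and the unit/counit isomorphisms completely explicit, and the sidedness caution you flag for the $(\gg,\mathcal A_\gg^r)$-bimodule $T^{\leq r}$ is exactly the point where care is needed but causes no real difficulty.
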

\begin{proof}
Consider the opposite category $(\Tg^r)^\mathrm{opp}$.  It has finitely many simple objects and enough projectives, and any object has finite length.  Moreover, $T^{\leq r}$ is a projective generator of $(\Tg^r)^\mathrm{opp}$.  By a well-known result of Gabriel \cite{G}, 
the functor
$$\operatorname{Hom}_{(\Tg^r)^\mathrm{opp}} (T^{\leq r}, \, \cdot \,) = \operatorname{Hom}_\gg (\, \cdot \,, T^{\leq r}) : (\Tg^r)^\mathrm{opp} \rightarrow \mathcal{A}_\gg^r \textrm{-mof}$$
is an equivalence of categories. 

We claim that $\operatorname{Hom}_{\mathcal{A}_\gg^r} (\, \cdot \,, T^{\leq r})$ is an inverse to $\operatorname{Hom}_\gg (\, \cdot \,, T^{\leq r})$.  For this it suffices to check that $\operatorname{Hom}_{(\Tg^r)^\mathrm{opp}} (T^{\leq r}, \, \cdot \,)$ is a right adjoint to $\operatorname{Hom}_{\mathcal{A}_\gg^r} (\, \cdot \,, T^{\leq r})$, i.e.\ that 
\begin{equation*} 
\operatorname{Hom}_{\mathcal{A}_\gg^r} (X, \operatorname{Hom}_{(\Tg^r)^\mathrm{opp}} (T^{\leq r}, M)) \simeq \operatorname{Hom}_{(\Tg^r)^\mathrm{opp}} (\operatorname{Hom}_{\mathcal{A}_\gg^r} (X, T^{\leq r}),M)
\end{equation*}
for any $X \in \mathcal{A}_\gg^r \textrm{-mof}$ and any $M \in \Tg^r$.  We have 
\begin{align*}
\operatorname{Hom}_{\mathcal{A}_\gg^r} (X, \operatorname{Hom}_{(\Tg^r)^\mathrm{opp}} (T^{\leq r}, M))
& = \operatorname{Hom}_{\mathcal{A}_\gg^r} (X, \operatorname{Hom}_\gg (M, T^{\leq r})) \\
& \stackrel{\Psi}{\simeq}  \operatorname{Hom}_{\mathcal{A}_\gg^r \otimes U (\gg) } (X \otimes M, T^{\leq r}) \\
& = \operatorname{Hom}_{U (\gg) \otimes \mathcal{A}_\gg^r } (M \otimes X, T^{\leq r}) \\
&  \stackrel{\Theta}{\simeq} \operatorname{Hom}_\gg (M , \operatorname{Hom}_{\mathcal{A}_\gg^r} (X, T^{\leq r}))\\
& = \operatorname{Hom}_{(\Tg^r)^\mathrm{opp}} (\operatorname{Hom}_{\mathcal{A}_\gg^r} (X, T^{\leq r}),M),
\end{align*}
where $\Psi(\varphi) (x \otimes m) = \varphi (x) (m)$ and $(\Theta (x) (m))(\psi) = \psi(m \otimes x)$ for $x \in X$, $m \in M$, $\varphi \in  \operatorname{Hom}_{\mathcal{A}_\gg^r} (X, \operatorname{Hom}_\gg (M, T^{\leq r}))$, and $\psi \in  \operatorname{Hom}_{U (\gg) \otimes \mathcal{A}_\gg^r } (M \otimes X, T^{\leq r})$.
\end{proof}

In order to relate the category $\mathcal{A}_\gg$-mof with the categories $\mathcal{A}_\gg^r$-mof for all $r \geq 0$, we need to establish some basic facts about the algebra $\mathcal{A}_\gg$.  Note first that by \cite{PStyr} 
$\operatorname{Hom}_{\sl(\infty)}(T^{p,q},T^{r,s})=0$ unless
$p-r=q-s\in \mathbb Z_{\geq 0}$, and for $\gg = \so(\infty)$, $\sp(\infty)$, $\operatorname{Hom}_\gg(T^{p},T^{q})=0$ unless
$p-q\in 2\mathbb Z_{\geq 0}$.
Furthermore, put 
\begin{align*}
(\mathcal A_\gg)^{p,q}_i&=\operatorname{Hom}_\gg(T^{p,q}, T^{p-i,q-i})
&\textrm{for } \gg=\sl(\infty) \\
\intertext{and}
(\mathcal A_\gg)_i^p&=\operatorname{Hom}_\gg(T^p,T^{p-2i})
&\textrm{for }\gg=\so(\infty) \textrm{, } \sp(\infty).\\ 
\intertext{
Then one can define a $\mathbb Z_{\geq 0}$-grading on $\mathcal A^r_\gg$ by
setting}
(\mathcal A^r_\gg)_i &=\bigoplus_{p+q\leq r} (\mathcal A_\gg)^{p,q}_i &\textrm{for } \gg=\sl(\infty) \\
\intertext{and}
(\mathcal A^r_\gg)_i &=\bigoplus_{p\leq r}  (\mathcal A_\gg)_i^p
&\textrm{for }\gg=\so(\infty) \textrm{, } \sp(\infty).\\ 
\intertext{
It also follows from the results of  \cite{PStyr} that} 
(\mathcal A^r_\gg)_0 & =\bigoplus_{p+q\leq r} \operatorname{End}_\gg(T^{p,q})=\bigoplus_{p+q\leq r}\mathbb C[S_p\times S_q] &\textrm{for } \gg=\sl(\infty) \\
\intertext{and}
(\mathcal A^r_\gg)_0 & =\bigoplus_{p\leq r} \operatorname{End}_\gg(T^{p})=\bigoplus_{p\leq r} \mathbb C[S_p] &\textrm{for }\gg=\so(\infty) \textrm{, } \sp(\infty).
\end{align*}
Hence $(\mathcal A^r_\gg)_0$ is semisimple.

In addition, we have
\begin{align*}
(\mathcal A_\gg)^{p,q}_i (\mathcal A_\gg)^{r,s}_j &=0 \textrm{ unless }p =r -j \textrm{, }q =s-j  &\textrm{for } \gg=\sl(\infty) \\
\intertext{and}
(\mathcal A_\gg)^p_i (\mathcal A_\gg)^r_j &=0 \textrm{ unless }p =r -2j  
&\textrm{for }\gg=\so(\infty) \textrm{, } \sp(\infty).\\ 
\intertext{
This shows that for each $r$,} 
\bar{\mathcal A^r_\gg} &:=\bigoplus_{p+q > r} \bigoplus_{i \geq 0}  (\mathcal A_\gg)_i^{p,q}
&\textrm{for } \gg=\sl(\infty) \\
\intertext{or}
\bar{\mathcal A^r_\gg} &:=\bigoplus_{p > r} \bigoplus_{i \geq 0}  (\mathcal A_\gg)_i^p
&\textrm{for }\gg=\so(\infty) \textrm{, } \sp(\infty)
\end{align*}
is a $\ZZ_{\geq 0}$-graded ideal in $\mathcal A_\gg$ such that $\mathcal A^r_\gg \oplus \bar{\mathcal A^r_\gg} = \mathcal A_\gg$.
Hence each unitary $\mathcal A^r_\gg$-module $X$ admits a canonical $\mathcal A_\gg$-module structure with $\bar{\mathcal A^r_\gg} X = 0$, and thus becomes a locally unitary $\mathcal A_\gg$-module.  This allows us to claim simply that  
$$\mathcal{A}_\gg\textrm{-mof}=\limarr  \mathcal{A}_\gg^r \textrm{-mof}.$$

Moreover, Proposition \ref{equiv} now implies the following.

\begin{cor}\label{bigequiv} 
The functors
  $\operatorname{Hom}_\gg(\, \cdot \,, T)$ and
  $\operatorname{Hom}_{\mathcal A_\gg}(\, \cdot \,, T)$ are mutually inverse
  antiequivalences of the categories  $\Tg$ and
  $\mathcal{A}_\gg$-$\mathrm{mof}$.
\end{cor}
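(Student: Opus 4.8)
The plan is to deduce Corollary~\ref{bigequiv} from Proposition~\ref{equiv} by a direct-limit argument, using the compatible systems of categories and functors already set up. First I would make precise the two direct systems. On the representation-theoretic side, the categories $\Tg^r$ form a direct system under the inclusion functors $\Tg^r \hookrightarrow \Tg^{r'}$ for $r \leq r'$, and by definition $\Tg = \limarr \Tg^r$; every object of $\Tg$ lies in some $\Tg^r$ by Corollary~\ref{newlab}\eqref{cor4}, and every morphism between objects of $\Tg^r$ is already a morphism in $\Tg^r$. On the algebra side, the decomposition $\mathcal A_\gg = \mathcal A^r_\gg \oplus \bar{\mathcal A^r_\gg}$ with $\bar{\mathcal A^r_\gg}$ a graded ideal gives, for each $r \leq r'$, an inclusion $\mathcal A^r_\gg \hookrightarrow \mathcal A^{r'}_\gg$ realized via the idempotent cutting off $\bar{\mathcal A^r_\gg}$, hence a fully faithful functor $\mathcal A^r_\gg\textrm{-mof} \hookrightarrow \mathcal A^{r'}_\gg\textrm{-mof}$, and the paper has already observed $\mathcal A_\gg\textrm{-mof} = \limarr \mathcal A^r_\gg\textrm{-mof}$ (every locally unitary finite-dimensional $\mathcal A_\gg$-module is killed by $\bar{\mathcal A^r_\gg}$ for $r \gg 0$ and thus comes from a unitary $\mathcal A^r_\gg$-module).

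Next I would check that the antiequivalences of Proposition~\ref{equiv} are compatible with these two direct systems, i.e.\ that for $r \leq r'$ the square formed by $\operatorname{Hom}_\gg(\,\cdot\,, T^{\leq r})$, $\operatorname{Hom}_\gg(\,\cdot\,, T^{\leq r'})$ and the two inclusion functors commutes up to natural isomorphism. For an object $M \in \Tg^r$, the point is that $\operatorname{Hom}_\gg(M, T^{\leq r'}) = \operatorname{Hom}_\gg(M, T^{\leq r})$ as $\mathcal A^{r'}_\gg$-modules, because $M$ has no simple subquotients among the $V_\lambda$ with $|\lambda| > r$ and hence $\operatorname{Hom}_\gg(M, \tilde V_\lambda) = 0$ for such $\lambda$; this is exactly the statement that the extra summands of $T^{\leq r'}$ not in $T^{\leq r}$ contribute nothing, and that $\bar{\mathcal A^r_\gg}$ acts as zero on $\operatorname{Hom}_\gg(M, T^{\leq r})$. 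Granting this compatibility, one passes to the limit: the functors $\operatorname{Hom}_\gg(\,\cdot\,, T^{\leq r})$ assemble into a functor $\Tg = \limarr \Tg^r \to \limarr \mathcal A^r_\gg\textrm{-mof} = \mathcal A_\gg\textrm{-mof}$, which is precisely $\operatorname{Hom}_\gg(\,\cdot\,, T)$ since $\operatorname{Hom}_\gg(M, T) = \operatorname{Hom}_\gg(M, T^{\leq r})$ for $M \in \Tg^r$ (a finite length module maps into a finite sub-sum of the direct sum $T = \bigoplus d_\lambda \tilde V_\lambda$). Likewise the inverse functors $\operatorname{Hom}_{\mathcal A^r_\gg}(\,\cdot\,, T^{\leq r})$ assemble into $\operatorname{Hom}_{\mathcal A_\gg}(\,\cdot\,, T)$. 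Since each pair is mutually inverse and the natural isomorphisms witnessing this are themselves compatible with the transition functors, the limit pair is mutually inverse as well, giving the claimed antiequivalence $\Tg \simeq \mathcal A_\gg\textrm{-mof}$.

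I would close by noting that $\operatorname{Hom}_\gg(\,\cdot\,, T)$ indeed lands in \emph{locally unitary} finite-dimensional $\mathcal A_\gg$-modules: for $M \in \Tg$, the space $\operatorname{Hom}_\gg(M, T)$ is finite-dimensional (it equals $\operatorname{Hom}_\gg(M, T^{\leq r})$ for suitable $r$, and $T^{\leq r}$ has finite length while $M$ does too), and it is annihilated by $\bar{\mathcal A^r_\gg}$, so the identity of $\mathcal A_\gg$ acts through the finite-rank idempotent of $\mathcal A^r_\gg$. Symmetrically, $\operatorname{Hom}_{\mathcal A_\gg}(X, T)$ for $X \in \mathcal A_\gg\textrm{-mof}$ is a finite length object of $\Tg$. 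The main obstacle, and the only step requiring genuine care rather than bookkeeping, is verifying the compatibility of the antiequivalences with the transition functors, i.e.\ that restricting $\operatorname{Hom}_\gg(\,\cdot\,, T^{\leq r'})$ along $\Tg^r \hookrightarrow \Tg^{r'}$ reproduces $\operatorname{Hom}_\gg(\,\cdot\,, T^{\leq r})$ together with the correct $\mathcal A_\gg$-action; this rests on the vanishing $\operatorname{Hom}_\gg(\tilde V_\mu, \tilde V_\lambda) = 0$ when $|\lambda|$ is large relative to $|\mu|$, which follows from the block structure of $\mathcal A_\gg$ recorded just before the corollary (namely $(\mathcal A_\gg)^{p,q}_i(\mathcal A_\gg)^{r,s}_j = 0$ unless the indices match, and the analogous statement for $\so(\infty)$, $\sp(\infty)$). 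Everything else is a formal consequence of the fact that a direct limit of (anti)equivalences of categories along fully faithful transition functors is again an (anti)equivalence.
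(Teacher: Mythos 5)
Your argument is correct and is essentially the paper's: the paper deduces Corollary~\ref{bigequiv} directly from Proposition~\ref{equiv} together with the identifications $\Tg=\limarr\Tg^r$ and $\mathcal{A}_\gg\textrm{-mof}=\limarr\mathcal{A}^r_\gg\textrm{-mof}$ established just before it, which is exactly the passage-to-the-limit you carry out. Your added verifications (e.g.\ $\operatorname{Hom}_\gg(M,\tilde V_\lambda)=0$ for $M\in\Tg^r$ and $|\lambda|>r$ via the essential simple socle, and $\operatorname{Hom}_\gg(M,T)=\operatorname{Hom}_\gg(M,T^{\leq r})$ for finite length $M$) are sound and simply make explicit what the paper leaves implicit.
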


We now need to recall the definition of a Koszul ring. See
\cite{BGS}, where this notion is studied extensively, and, in
particular, several equivalent definitions are given. According to
 Proposition 2.1.3 in \cite{BGS}, a $\mathbb Z_{\geq 0}$-graded ring $A$
is \emph{Koszul} if $A_0$ is a semisimple ring and for any two graded $A$-modules
$M$ and $N$  of pure weight $m, n \in \mathbb{Z}$ respectively,
$\operatorname{ext}_A^i(M,N)=0$ unless $i=m-n$, where
$\operatorname{ext}_A^i$ denotes the ext-group in the category of
$\mathbb{Z}$-graded $A$-modules. 

In the rest of this section we show that 
$\mathcal A^r_\gg$ is a Koszul ring. 

We start by introducing the following notation: for any partition $\mu$, we set
\begin{align*}
\mu^+ & := \{ \textrm{partitions } \mu' \mid |\mu'| = |\mu| + 1 \textrm{ and } \mu'_i \neq \mu_i \textrm{ for exactly one }i \}, \\
\mu^- & := \{  \textrm{partitions } \mu' \mid |\mu'| = |\mu| - 1 \textrm{ and } \mu'_i \neq \mu_i \textrm{ for exactly one }i \}.
\end{align*} 
For any pair of partitions $\lambda = (\lambda^1,\lambda^2)$, we define
\begin{align*}
\lambda^+ & := \{ \textrm{pairs of partitions } \eta \mid \eta^1\in{\lambda^1}^+, \eta^2=\lambda^2
 \}, \\
\lambda^- & := \{  \textrm{pairs of partitions } \eta \mid \eta^1=\lambda^1, \eta^2\in{\lambda^2}^-  \}.
\end{align*} 

\begin{lemma} \label{les}
For any simple object $V_\lambda$ of $\Tg$, there is an exact sequence
$$0\to V^+_\lambda\to V\otimes V_\lambda\to V_\lambda^-\to 0,$$
where
\begin{align*}
V^+_\lambda&=\bigoplus_{\eta\in\lambda^+}V_\eta\\
 V^-_\lambda&=\bigoplus_{\eta\in\lambda^-}V_\eta.
\end{align*}
Moreover, $V^+_\lambda=\operatorname{soc} (V\otimes V_\lambda)$.
\end{lemma}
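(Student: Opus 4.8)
The plan is to prove the statement in three stages: first identify the simple constituents of $V \otimes V_\lambda$, then produce the short exact sequence, and finally pin down the socle.

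\textbf{Step 1: decomposition of $V \otimes V_\lambda$ into a sum of $\tilde V_\eta$'s, hence identification of constituents.} I would first treat $\tilde V_\lambda$ rather than $V_\lambda$, since tensoring with $V$ behaves well on the level of the $\tilde V$'s. Concretely, $V \otimes \tilde V_\lambda$ is a tensor module, and by working in each finite-dimensional $\gg_n$ (with $n$ large relative to $\#\lambda$) and invoking the Pieri rule, one sees that $V \otimes \tilde V_\lambda$ decomposes, compatibly in $n$, so that in the limit $V \otimes \tilde V_\lambda \cong \bigoplus_{\eta \in \lambda^+} \tilde V_\eta \oplus \bigoplus_{\eta \in \lambda^-} \tilde V_\eta$ for $\gg = \sl(\infty)$ (adding a box to $\lambda^1$ or removing a box from $\lambda^2$), and similarly for the orthogonal and symplectic cases using the branching of $V \otimes V_\lambda$ combined with \eqref{nabla}. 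Since each $\tilde V_\eta$ is injective in $\Tg$ by Corollary~\ref{newlab}(1), this exhibits $V \otimes \tilde V_\lambda$ as injective with socle $\bigoplus_{\eta \in \lambda^+ \cup \lambda^-} V_\eta$. Then, using $V_\lambda = \operatorname{soc}(\tilde V_\lambda) \hookrightarrow \tilde V_\lambda$ and left-exactness, one gets $V \otimes V_\lambda \hookrightarrow V \otimes \tilde V_\lambda$, so every simple constituent of $V \otimes V_\lambda$ is of the form $V_\eta$ with $\eta \in \lambda^+ \cup \lambda^-$; moreover each such $V_\eta$ appears at most once (multiplicities being inherited from the multiplicity-free Pieri decomposition).

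\textbf{Step 2: the short exact sequence.} The natural surjection comes from contraction. For $\gg = \sl(\infty)$ one has the contraction-type $\gg$-morphism $V \otimes V_\lambda \to V_\lambda^-$: interpreting $V_\lambda \subset \tilde V_{\lambda} = V_{\lambda^1} \otimes (V_{\lambda^2})_*$, contracting the new $V$-factor against a factor of $(V_{\lambda^2})_*$ lands in $V_{\lambda^1} \otimes (V_{\eta^2})_*$ with $\eta^2 \in (\lambda^2)^-$, and one checks this map is nonzero onto each summand $V_\eta$, $\eta \in \lambda^-$; for $\gg = \so(\infty), \sp(\infty)$ one contracts the new $V$-factor against $V_\lambda$ using the invariant form. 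Call this map $\pi : V \otimes V_\lambda \to V_\lambda^-$. I would argue $\pi$ is surjective because $V_\lambda^-$ is semisimple and every $V_\eta$, $\eta \in \lambda^-$, really does occur in the image — this can be checked at finite level $\gg_n$. Then $\ker \pi$ is the sum of the remaining constituents, all of the form $V_\eta$ with $\eta \in \lambda^+$; to conclude $\ker \pi \cong V_\lambda^+ = \bigoplus_{\eta \in \lambda^+} V_\eta$ I need $\ker \pi$ to be semisimple, which will follow once Step 3 is established.

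\textbf{Step 3: $\operatorname{soc}(V \otimes V_\lambda) = V_\lambda^+$, which is the crux.} Since $V \otimes V_\lambda \hookrightarrow V \otimes \tilde V_\lambda$ and $\operatorname{soc}(V \otimes \tilde V_\lambda) = \bigoplus_{\eta \in \lambda^+ \cup \lambda^-} V_\eta$, the socle of $V \otimes V_\lambda$ is contained in this sum. So the real content is: (a) each $V_\eta$ with $\eta \in \lambda^+$ does embed in $V \otimes V_\lambda$ (i.e. it is not killed by passing from $\tilde V_\lambda$ to $V_\lambda$), and (b) no $V_\eta$ with $\eta \in \lambda^-$ sits inside the socle — equivalently, each constituent $V_\eta$, $\eta \in \lambda^-$, is \emph{not} a submodule, only a quotient. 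For (b), the cleanest route is a $\operatorname{Hom}$/injectivity argument: if $V_\eta \subset V\otimes V_\lambda$ for some $\eta \in \lambda^-$, then by injectivity of $\tilde V_\eta$ the inclusion $V_\eta \hookrightarrow \tilde V_\eta$ extends to $V \otimes V_\lambda \to \tilde V_\eta$, hence by the tensor-hom adjunction to a nonzero map $V_\lambda \to V_* \otimes \tilde V_\eta$ (using $V \cong (V_*)^*$-type adjunction, or directly $V^\ast_\ast$ in the $\sl$ case); analyzing $\operatorname{soc}$ or the constituents of $V_* \otimes \tilde V_\eta$ via the $\sl(\infty)$-version of the decomposition and \eqref{nabla} shows $V_\lambda$ cannot be a submodule there when $\eta \in \lambda^-$ (the relevant box-removal/addition combinatorics forbid it), a contradiction. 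For (a), dualize: the quotient map $V \otimes \tilde V_\lambda \twoheadrightarrow$ (top) has the $V_\eta$, $\eta \in \lambda^+$, as quotients of the radical; since $V_\lambda^+$ is exactly $\operatorname{soc}(V \otimes \tilde V_\lambda)$ minus the $\lambda^-$ part, and this socle part already lies in the image of $V \otimes V_\lambda$ (as $V_\lambda \supset$ the relevant lowest pieces), one gets $V_\lambda^+ \subseteq \operatorname{soc}(V \otimes V_\lambda)$. Combining (a) and (b) gives $\operatorname{soc}(V \otimes V_\lambda) = V_\lambda^+$; this forces $\ker\pi$ in Step 2 to be semisimple (being contained in the socle) and equal to $V_\lambda^+$, completing the proof. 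I expect Step 3(b) — ruling out the $\lambda^-$ constituents from the socle — to be the main obstacle, and the adjunction-plus-injectivity argument above is the tool I would lean on; the finite-rank Pieri computations in Steps 1–2 are routine by comparison.
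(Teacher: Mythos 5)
Your Step 1 already contains an error that propagates: for $\gg=\sl(\infty)$ one has $V\otimes \tilde V_\lambda=(V\otimes V_{\lambda^1})\otimes (V_{\lambda^2})_*\simeq\bigoplus_{\eta\in\lambda^+}\tilde V_\eta$ only -- tensoring the \emph{injective} module $\tilde V_\lambda$ with $V$ adds a box to $\lambda^1$ (resp.\ to $\lambda$ in the orthogonal/symplectic cases) and produces no summands $\tilde V_\eta$ with $\eta\in\lambda^-$; the box removal appears only in the constituents of $V\otimes V_\lambda$. A concrete check: for $\lambda=(0,(1))$ one has $\tilde V_\lambda=V_*$ and $V\otimes\tilde V_\lambda=V\otimes V_*=\tilde V_{((1),(1))}$, whose socle is $\sl(\infty)$, not $\sl(\infty)\oplus\CC$. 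So your claimed $\operatorname{soc}(V\otimes\tilde V_\lambda)=\bigoplus_{\eta\in\lambda^+\cup\lambda^-}V_\eta$ is false, and your derivation of the constituents of $V\otimes V_\lambda$ from the embedding into $V\otimes\tilde V_\lambda$ does not go through as written (each $\tilde V_\eta$ has many more constituents than $V_\eta$; the semisimplification must instead be obtained from the Pieri rule at finite level, as the paper does). Ironically, the corrected decomposition would make your point (b) immediate, since then $\operatorname{soc}(V\otimes V_\lambda)\subseteq\operatorname{soc}(V\otimes\tilde V_\lambda)=V_\lambda^+$.

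The second, more serious gap is Step 3(b) itself. The tensor-hom adjunction you invoke is not available: $V$ is infinite dimensional, $(V_*)^*\neq V$, and $\Hom_\CC(V,N)$ is not $V_*\otimes N$ and is not an object of $\Tg$. Moreover, even granting some version of it, the argument cannot yield a contradiction: in the example above, with $\eta=(0,0)\in\lambda^-$, the inclusion hypothesis would lead to a nonzero map $V_*\to V_*\otimes\tilde V_\eta=V_*$, which of course exists (the identity), so no combinatorial obstruction appears. The exclusion of the $\lambda^-$ constituents from the socle, and likewise your point (a) that every $V_\eta$ with $\eta\in\lambda^+$ actually embeds in $V\otimes V_\lambda$ (not merely in $V\otimes\tilde V_\lambda$), are not formal Hom-space consequences; they require structural input on the socle filtration of $T^{p,q}$. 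This is exactly what the paper uses: by \cite[Theorem 2.3]{PStyr} one has $\operatorname{soc}(V\otimes V_\lambda)=\operatorname{soc}(V\otimes\tilde V_\lambda)=\operatorname{soc}(T^{|\lambda^1|+1,|\lambda^2|})\cap(V\otimes\tilde V_\lambda)=V_\lambda^+$, and the quotient is semisimple because the $\lambda^-$ constituents all lie in $\operatorname{soc}^1(T^{|\lambda^1|+1,|\lambda^2|})$. Your Steps 2 and the length-counting logic at the end are fine once these facts are in place, but as proposed the crucial step rests on an argument that fails.
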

\begin{proof} We will prove the statement for $\gg=\sl(\infty)$. The other
cases are similar. The fact that the semisimplification of $V\otimes V_\lambda$ is isomorphic to $ V^+_\lambda \oplus V^-_\lambda$ follows from the classical Pieri rule. 

To prove the equality $V^+_\lambda=\operatorname{soc} (V\otimes V_\lambda)$, observe that
$$\operatorname{soc} (V\otimes V_\lambda)= \operatorname{soc}
(V\otimes \tilde{V}_\lambda)=  \operatorname{soc}(T^{|\lambda^1|+1,|\lambda^2|})\cap(V\otimes \tilde{V}_\lambda).$$
The fact that $V^+_\lambda= \operatorname{soc}(T^{|\lambda^1|+1,|\lambda^2|})\cap(V\otimes \tilde{V}_\lambda)$
follows directly from \cite[Theorem 2.3]{PStyr}.

It remains to show that the quotient $(V\otimes V_\lambda) / V^+_\lambda$ is semisimple. This follows again from
\cite[Theorem 2.3]{PStyr}, since all simple subquotients of $(V\otimes V_\lambda) / V^+_\lambda$, i.e.\ all direct summands of
$V^-_\lambda$, lie in $\operatorname{soc}^1(T^{|\lambda^1|+1,|\lambda^2|})$.
\end{proof}

\begin{prop}\label{ext} 
If $\operatorname{Ext}^i_{\Tg}(V_\lambda,V_\mu)\neq  0$, then
\begin{align*}
|\mu^1|-|\lambda^1|=|\mu^2|-|\lambda^2|&=i & \textrm{for } \gg=\sl(\infty)\\
\intertext{and}
|\mu|-|\lambda|&=2i & \textrm{for }  \gg=\so(\infty) \textrm{, } \sp(\infty).
\end{align*}
\end{prop}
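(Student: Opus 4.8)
The plan is to compute $\operatorname{Ext}^i_{\Tg}(V_\lambda,V_\mu)$ from a minimal injective resolution of $V_\mu$. By Corollary~\ref{newlab} the indecomposable injectives of $\Tg$ are exactly the $\tilde V_\eta$, each is the injective hull of its simple socle $V_\eta$, and $\Hom_\gg(V_\lambda,\tilde V_\eta)=\CC\,\delta_{\lambda\eta}$. Hence in a minimal injective resolution $0\to V_\mu\to I^0\to I^1\to\cdots$, writing $I^n=\bigoplus_\eta\tilde V_\eta^{\,m_{n,\eta}}$, minimality forces each transition map to vanish on socles, so that $\operatorname{Ext}^i_{\Tg}(V_\lambda,V_\mu)\cong\CC^{\,m_{i,\lambda}}$. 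Thus it suffices to show that if $\tilde V_\eta$ occurs in $I^n$, then $|\eta^1|-|\mu^1|=|\eta^2|-|\mu^2|=-n$ for $\gg=\sl(\infty)$, resp.\ $|\eta|=|\mu|-2n$ for $\gg=\so(\infty),\sp(\infty)$.

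The essential input I would use, from \cite[Thms.\ 2.3, 3.2, 4.2]{PStyr} together with Corollary~\ref{newlab}, is that the socle filtration of $\tilde V_\nu$ is homogeneous layer by layer: $\soc^{k}(\tilde V_\nu)/\soc^{k-1}(\tilde V_\nu)$ is semisimple and every constituent $V_\eta$ of it has $|\eta^j|=|\nu^j|-(k-1)$ for $j=1,2$, resp.\ $|\eta|=|\nu|-2(k-1)$ — each socle layer arising from the previous one by a single contraction. I would then argue by induction on $n$. For $n=0$ we have $I^0=\tilde V_\mu$; since $\soc(\tilde V_\mu/V_\mu)=\soc^2(\tilde V_\mu)/V_\mu$ it is homogeneous of ``weight shift'' $1$, so $I^1=\bigoplus_\eta\tilde V_\eta^{\,m_{1,\eta}}$ with every occurring $\eta$ of shift $1$. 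In general $I^n$ is the injective hull of $\operatorname{coker}(I^{n-2}\to I^{n-1})$, and the claim is that this cokernel has socle homogeneous of shift exactly $n$; since each $\tilde V_\eta$ in $I^{n-1}$ has shift $n-1$ and a layered socle filtration, its injective hull is then $\bigoplus_\eta\tilde V_\eta^{\,m_{n,\eta}}$ with all occurring $\eta$ of shift $n$, which closes the induction and proves the proposition.

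The hard part will be exactly this claim about $\operatorname{coker}(I^{n-2}\to I^{n-1})$. Writing $N:=\operatorname{im}(I^{n-2}\to I^{n-1})$, minimality gives $\soc(N)=\soc(I^{n-1})$, so $N\hookrightarrow I^{n-1}$ is essential and $\soc^k(N)=N\cap\soc^k(I^{n-1})$; by the layeredness above the $k$-th socle layers of $N$ and of $I^{n-1}$ are each homogeneous of the single shift $(n-1)+(k-1)$, and a count in the Grothendieck group shows $I^{n-1}/N$ has no constituent of shift $n-1$, hence $\soc(I^{n-1}/N)$ lies in shifts $\ge n$. Ruling out shifts strictly greater than $n$ in that socle is the delicate point: it cannot follow from formal properties of the $\ZZ_{\ge0}$-grading on $\mathcal{A}_\gg$ alone (an algebra can be generated in degree $1$ and still fail to have linear syzygies), and is where one genuinely uses the detailed submodule structure of the $\tilde V_\nu$ from \cite{PStyr} — concretely, the explicit contraction-type maps of the previous section, which realise one weight step at a time exactly the extensions that can occur. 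As a cross-check on the reverse inequality, the bound $|\mu^j|-|\lambda^j|\ge i$, resp.\ $|\mu|-|\lambda|\ge2i$, also drops out of Corollary~\ref{bigequiv}: under the antiequivalence $\Tg\simeq\mathcal{A}_\gg$-$\mathrm{mof}$ the simple $\Hom_\gg(V_\nu,T)$ is supported on the block $\CC[S_{|\nu^1|}\times S_{|\nu^2|}]$ of $(\mathcal{A}_\gg)_0$, the component $(\mathcal{A}_\gg)_j$ shifts this block by $j$ in each coordinate (resp.\ by $2j$), and a minimal graded projective resolution of a simple module over a positively graded ring with semisimple degree-zero part has its $i$-th term generated in degrees $\ge i$.
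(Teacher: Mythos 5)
Your reduction is fine as far as it goes: computing $\operatorname{Ext}^i_{\Tg}(V_\lambda,V_\mu)$ from a minimal injective resolution, and reducing the proposition to the statement that every $\tilde V_\eta$ occurring in $I^n$ has weight shift exactly $n$, is correct, and the input you quote from \cite{PStyr} (homogeneity of the socle layers of $\tilde V_\nu$, one shift per layer) is also correct. But the step you yourself flag --- ruling out constituents of shift strictly greater than $n$ in $\soc\bigl(I^{n-1}/\operatorname{im}(I^{n-2}\to I^{n-1})\bigr)$ --- is left unproved, and it is not a technical remainder: it is the entire content of the proposition. The inputs you propose to use are strictly weaker than what is needed. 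Homogeneity of the socle layers of the injectives translates, under the antiequivalence of Corollary \ref{bigequiv}, into the statement that the radical filtration of $\mathcal A_\gg$ coincides with the grading filtration, i.e.\ that $\mathcal A_\gg$ is generated in degrees $0$ and $1$; and ``the contractions realise one weight step at a time exactly the extensions that can occur'' is an $\operatorname{Ext}^1$ statement, equivalent to generation in degree one. Neither of these, nor both together, implies linearity of minimal resolutions in homological degrees $\geq 2$: a graded algebra with semisimple degree-zero part can be generated in degree one, have $\operatorname{Ext}^1$ between simples concentrated in adjacent degrees, and still fail to be Koszul. Since Proposition \ref{ext} is essentially the Koszulity statement for $\mathcal A^r_\gg$ (the paper deduces Theorem \ref{koszulity} from it), your plan defers precisely the non-formal part to an unspecified appeal to \cite{PStyr}. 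Your final ``cross-check'' only gives the easy inequality (generators of the $i$-th term of a minimal graded resolution sit in degrees $\geq i$), which again does not address the problematic direction.

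For comparison, the paper closes this gap by a different mechanism: induction on $|\mu|$ using the short exact sequence $0\to V^+_\eta\to V\otimes V_\eta\to V^-_\eta\to 0$ of Lemma \ref{les} (which encodes finer information than socle-layer degrees, namely that $V\otimes V_\eta$ has Loewy length two with both layers known), the associated long exact sequence, and the observation that tensoring a minimal injective resolution of $V_\eta$ with $V$ yields an injective resolution of $V\otimes V_\eta$ whose terms $V\otimes M_j$ have controlled socles. The inductive hypothesis pins down the shifts occurring in the resolution of $V_\eta$ (with $|\eta|<|\mu|$), and $\operatorname{Hom}_\gg(V_\lambda, V\otimes M_i)\neq 0$ then forces the asserted degree condition. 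If you want to complete your approach instead, you would need an argument of comparable strength showing directly that no deep socle constituent of $I^{n-1}$ can surface in the socle of the cokernel --- which is exactly what the Pieri sequence and the tensoring trick accomplish.
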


\begin{proof} Let $\gg=\sl(\infty)$. We will prove the statement by
  induction on $|\mu|$. The base of induction $\mu=(0,0)$
  follows immediately from the fact that
  $V_{(0,0)}=\mathbb C$ is injective. We assume $\operatorname{Ext}^i_{\Tg}(V_\lambda,V_\mu)\neq  0$.  Without loss of generality we
  may assume that $|\mu^1|>0$. Then there exists a pair of partitions $\eta$ such that $\mu \in \eta^+$.
   Since $V_\mu$ is a direct summand of $V^+_\eta$, we have $\operatorname{Ext}^i_{\Tg}(V_\lambda, V^+_\eta)\neq  0$.

Consider the short exact sequence from Lemma~\ref{les}
$$0\to V_\eta^+\to V \otimes V_\eta \to V_\eta^-\to 0.$$
The associated long exact sequence implies that either $\operatorname{Ext}^i_{\Tg}(V_\lambda, V \otimes V_\eta)\neq  0$ or $\operatorname{Ext}^{i-1}_{\Tg}(V_\lambda, V^-_\eta)\neq  0$.  In the latter case, the inductive hypothesis implies that
$$| \eta^1| - | \lambda_1 | = (| \eta^2| -1) - | \lambda^2| = i-1.$$ 
The condition in the statement of the proposition follows, as $| \eta^1| = |\mu^1| -1$ and $ |\eta^2 | = | \mu^2 |$. 

Now assume that $\operatorname{Ext}^i_{\Tg}(V_\lambda, V \otimes V_\eta)\neq  0$.  Let 
$$0\to V_\eta\to M_0\to M_1\to\dots$$
be a minimal injective resolution of $V_\eta$ in $\Tg$.
By the inductive hypothesis, $\operatorname{Ext}^j_{\Tg}(V_\nu,V_\eta)\neq 0$
implies 
\begin{equation} \label{etanu}
|\eta^1|-|\nu^1|=|\eta^2|-|\nu^2|=j.
\end{equation}
By the minimality of the resolution, it has no nontrivial direct sum decomposition, 
hence $\tilde V_\nu$ appears as a direct summand of $M_j$ only
if \eqref{etanu} holds. That is, $M_j=\oplus \tilde V_\nu$ for some set of $\nu$ such that $|\nu^1| = |\eta^1|-j$ and $|\nu^2| = |\eta^2|-j$.   

Furthermore, 
$$0\to V \otimes V_\eta\to V \otimes M_0\to V \otimes M_1\to\dots$$
is an injective resolution of  $V \otimes V_\eta$.  Thus $\operatorname{Hom}_\gg ( V_\lambda , V \otimes M_i) \neq 0$ implies $|\lambda^1| = |\eta^1| - i + 1$ and $|\lambda^2| = |\eta^2| - i$, and the proof for $\gg = \sl(\infty)$ is complete.

The proof for $\gg=\so(\infty)$, $\sp(\infty)$ is similar, and
we leave it to the reader.
\end{proof}

Recall that any $\gg$-module $W$ has a well-defined socle filtration $$0 \subset \soc^0 (W) = \soc (W) \subset \soc^1 (W) \subset \cdots$$
where $\soc^i(W) := \pi^{-1}_{i-1} (\soc (W / \soc^{i-1}(W))$ and $\pi_{i-1} : W \rightarrow W/\soc^{i-1}(W)$ is the projection.  Similarly, any $\mathcal A_\gg$-module $X$ has a radical filtration
$$ \cdots \subset \rad^1(X) \subset \rad^0(X) = \rad (X) \subset X$$
where $\rad(X)$ is the joint kernel of all surjective $\mathcal A_\gg$-homomorphisms $X \rightarrow X'$ with $X'$ simple, and $\rad^i(X) = \rad (\rad^{i-1}(X))$.

Note furthermore that the $\operatorname{Ext}$'s in the category $\Tg$ differ essentially from the $\operatorname{Ext}$'s in $\gg$-mod.  In particular, as shown in \cite{PS}, $\operatorname{Ext}_\gg^1 (V_\lambda, V_\mu)$ is uncountable dimensional whenever nonzero, whereas $\operatorname{Ext}_{\Tg}^1 (V_\lambda, V_\mu)$ is always finite dimensional by Corollary~\ref{bigequiv}.  Here are two characteristic examples.
\begin{enumerate}
\item Consider the exact sequence of $\gg$-modules
$$ 0 \rightarrow V \rightarrow (V_*)^* \rightarrow (V_*)^* / V \rightarrow 0.$$
The $\gg$-module $(V_*)^* / V$ is trivial, and any vector in $\operatorname{Ext}_{\sl(\infty)}^1 (\CC , V)$ determines a unique $1$-dimensional subspace in $(V_*)^* / V$.  On the other hand, $\operatorname{Ext}_{\T_{\sl(\infty)}}^1 (\CC , V) = 0$ by Proposition~\ref{ext}.
\item Each nonzero vector of $\operatorname{Ext}_{\sl(\infty)}^1 (\CC , \sl(\infty))$ corresponds to a $1$-dimensional trivial quotient of $\soc^1 ((\sl(\infty)_*)^*)$ (see \cite{PS}).  The nonzero vectors of the $1$-dimensional space $\operatorname{Ext}_{\T_{\sl(\infty)}}^1 (\CC , \sl(\infty))$ on the other hand correspond to the unique $1$-dimensional quotient of $\soc^1 ((\sl(\infty)_*)^*)$ which determines an absolute weight module, namely $\tilde{\sl(\infty)} / \sl(\infty) = (V \otimes V_*) / \sl(\infty)$.
\end{enumerate}

The following is the main result of this section.

\begin{thm}\label{koszulity} 
The ring $\mathcal A^r_\gg$ is Koszul.
\end{thm}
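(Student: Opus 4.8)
The plan is to verify Koszulity of the finite-dimensional graded ring $\mathcal A^r_\gg$ directly from the cohomological criterion of Beilinson--Ginzburg--Soergel recalled just above: since $(\mathcal A^r_\gg)_0$ is semisimple, it suffices to show that for graded $\mathcal A^r_\gg$-modules $M$, $N$ of pure weights $m$, $n$ one has $\operatorname{ext}^i_{\mathcal A^r_\gg}(M,N) = 0$ unless $i = m-n$. By semisimplicity of the degree-zero part it is enough to treat $M$ and $N$ simple, concentrated in single degrees. The key observation is that the antiequivalence of Proposition~\ref{equiv} (and Corollary~\ref{bigequiv}) translates $\operatorname{Ext}$-groups in $\mathcal A^r_\gg\textrm{-mof}$ into $\operatorname{Ext}$-groups in $\Tg^r$ between the simple objects $V_\lambda$, with the homological degree preserved. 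So the engine of the proof is Proposition~\ref{ext}, which already pins down the admissible degree differences for nonvanishing of $\operatorname{Ext}^i_{\Tg}(V_\lambda,V_\mu)$.

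The first step is to make the dictionary precise: I would identify, under $\operatorname{Hom}_\gg(\,\cdot\,,T^{\leq r})$, the simple $\mathcal A^r_\gg$-module indexed by $\lambda$ with (the top of) the projective cover of the simple object dual to $V_\lambda$, and show that the $\ZZ_{\geq 0}$-grading on $\mathcal A^r_\gg$ introduced via $(\mathcal A_\gg)^{p,q}_i = \operatorname{Hom}_\gg(T^{p,q},T^{p-i,q-i})$ corresponds under the antiequivalence to the grading of simple objects of $\Tg^r$ by $|\lambda|$ (more precisely by $\tfrac12(|\lambda^1|+|\lambda^2|)$ for $\sl(\infty)$ and $\tfrac12|\lambda|$ for the orthogonal/symplectic cases — the point is that $\operatorname{Ext}^1$ connects consecutive graded layers). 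Then, for simple graded modules $L_\lambda$, $L_\mu$ placed in degrees determined by this grading, I would show $\operatorname{ext}^i_{\mathcal A^r_\gg}(L_\lambda, L_\mu) \cong \operatorname{Ext}^i_{\Tg^r}(V_\mu, V_\lambda)$ (note the opposite-category flip), with homological degree $i$ matching on both sides because the antiequivalence is exact and sends injectives to projectives.

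The second step is then purely formal: Proposition~\ref{ext} says $\operatorname{Ext}^i_{\Tg}(V_\mu,V_\lambda)\neq 0$ forces $|\lambda^1|-|\mu^1| = |\lambda^2|-|\mu^2| = i$ (resp.\ $|\lambda|-|\mu|=2i$), which is exactly the statement that the homological degree equals the difference of the assigned internal degrees. One also needs the converse inequality — namely that there are no ``too short'' $\operatorname{Ext}$'s, i.e.\ $\operatorname{Ext}^i$ can be nonzero only when this degree difference is $i$ and not smaller; but that is precisely what Proposition~\ref{ext} delivers, since it is an ``only if'' statement giving the exact value of $i$. Combined with the fact that $(\mathcal A^r_\gg)_0$ is semisimple (already established), the BGS criterion is met and $\mathcal A^r_\gg$ is Koszul.

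The main obstacle I anticipate is not the degree bookkeeping but rather being careful that the grading on $\mathcal A^r_\gg$ is the \emph{correct} one for Koszulity — i.e.\ that $\operatorname{Ext}^1$ between adjacent simples is generated in degree $1$ and that the grading is generated in degrees $0$ and $1$ — and that the antiequivalence of Proposition~\ref{equiv} is compatible with gradings. Here one should invoke that $\mathcal A^r_\gg$ is generated over $(\mathcal A^r_\gg)_0$ by $(\mathcal A^r_\gg)_1$ (a consequence of the structure of $\operatorname{Hom}$-spaces between tensor powers from \cite{PStyr}, reflected in Lemma~\ref{les}, which exhibits the degree-$1$ maps via tensoring with $V$), so that the $\ZZ_{\geq 0}$-grading is the natural one making $\Tg^r$ into a category with a ``weight'' on simples, and the normalization of Proposition~\ref{ext} is exactly this weight. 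Once that compatibility is in hand, the vanishing $\operatorname{ext}^i = 0$ for $i \neq m-n$ is immediate from Proposition~\ref{ext}, and since the argument works uniformly for all three series, the theorem follows in all cases.
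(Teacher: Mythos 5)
Your overall strategy coincides with the paper's (BGS criterion, semisimplicity of $(\mathcal A^r_\gg)_0$, reduction to simple modules, the antiequivalence of Proposition~\ref{equiv}/Corollary~\ref{bigequiv}, and Proposition~\ref{ext} as the engine), but there is a genuine gap at the decisive step. The $\operatorname{ext}$ in the BGS criterion is Ext in the category of \emph{graded} $\mathcal A^r_\gg$-modules, whereas Proposition~\ref{ext} (transported through the antiequivalence) only controls the \emph{ungraded} groups $\operatorname{Ext}^i_{\mathcal A_\gg}(X_\mu,X_\lambda)$. For pure simples one has $\operatorname{Ext}^i(X_\mu,X_\lambda)=\bigoplus_k\operatorname{ext}^i(X_\mu,X_\lambda\langle k\rangle)$, and a priori the generators of the $i$-th term of a minimal graded projective resolution of $X_\mu$ only sit in internal degrees $\geq i$; Koszulity is the assertion that they sit in degree exactly $i$. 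Your claimed identification $\operatorname{ext}^i_{\mathcal A^r_\gg}(L_\lambda,L_\mu)\simeq\operatorname{Ext}^i_{\Tg^r}(V_\mu,V_\lambda)$ for your chosen pure placements is precisely this linearity statement, and it is asserted rather than proved: exactness of the antiequivalence and ``injectives go to projectives'' only give the ungraded comparison, so declaring the conclusion ``immediate from Proposition~\ref{ext}'' begs the question. The paper closes this gap by producing a graded lift of the minimal projective resolution $\cdots\to P^1\to P^0$ of $X_\mu$ in which $P^i$ is generated in a single degree; besides Proposition~\ref{ext} (which determines which $\tilde X_\nu$ occur in $P^i$), this uses the socle-filtration computation of $\tilde V_\mu$ from \cite{PStyr} -- equivalently, that simple constituents of the $j$-th radical layer of the projective $\tilde X_\mu$ have weight difference exactly $j$ -- to see that the differentials shift radical (hence internal) degree by one, so that the resolution can be graded consistently. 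Some argument of this type (one could alternatively observe that the internal degree-$d$ component of $\mathcal A_\gg\mathbb Y_\mu$ is a module over the summand $\CC[S_{|\mu^1|-d}\times S_{|\mu^2|-d}]$ of $(\mathcal A_\gg)_0$, so internal degree automatically determines the weight of constituents) is indispensable and absent from your proposal.

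A secondary point: the compatibility you single out as the main obstacle -- that $\mathcal A^r_\gg$ is generated in degrees $0$ and $1$ -- is not an ingredient of the proof but a consequence of Koszulity (as the paper notes at the beginning of Section 6), and it would not suffice in any case, since quadraticity of a graded algebra with semisimple degree-zero part does not imply Koszulity. The issue is not whether the grading is ``the natural one,'' but whether the minimal resolutions of the simples are linear with respect to it.
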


\begin{proof} 
According to \cite[Proposition 2.1.3]{BGS}, it suffices to prove that  unless $i = m - n$, one has $\operatorname{ext}^i_{\mathcal{A}^r_\gg} (M , N) = 0$ for any pure $\mathcal{A}^r_\gg$-modules $M$, $N$ of weights $m$, $n$ respectively.  We will prove that unless $i = m - n$, one has $\operatorname{ext}^i_{\mathcal{A}_\gg} (M , N) = 0$ for any simple pure $\mathcal{A}_\gg$-modules $M$, $N$ of weights $m$, $n$ respectively.  Since any $\mathcal A^r_\gg$-module admits a canonical $\mathcal A_\gg$-module structure, it follows that $\operatorname{ext}^i_{\mathcal{A}^r_\gg} (M , N) = 0$ for any simple pure $\mathcal{A}^r_\gg$-modules $M$, $N$ of weights $m$, $n$ respectively unless $i = m-n$.  The analogous statement for arbitrary $\mathcal A^r_\gg$-modules of pure degree follows, since all such modules are semisimple. 

Let $X_\lambda$ (respectively, $\tilde X_\lambda$) be the simple $\mathcal A_\gg$-module which is the image of $V_\lambda$ (resp., $\tilde V_\lambda$) under the antiequivalence of Corollary~\ref{bigequiv}. Then  $\tilde X_\lambda$
is a projective cover of  $X_\lambda$. Proposition~\ref{ext} implies
that  $\operatorname{Ext}^i_{\mathcal A_\gg}(X_\mu,X_\lambda)=0$ unless
$|\mu^1|-|\lambda^1|=|\mu^2|-|\lambda^2|=i$. We consider a minimal
projective resolution of $X_\mu$
\begin{equation}\label{res}
\dots\to P^1\to P^0\to 0
\end{equation}
and claim that it must have the property $P^i \simeq \oplus\tilde X_\nu$ for some set of $\nu$ with $|\mu^1|-|\nu^1|=|\mu^2|-|\nu^2|=i$.

On the other hand, by \cite{PStyr} if $V_\nu$ is a simple constituent of
$\operatorname{soc}^i (\tilde V_\mu)$, or if under the antiequivalence $X_\nu$ is a simple constituent of
$\operatorname{rad}^i \tilde X_\mu$,  then   $|\mu^1|-|\nu^1|=|\mu^2|-|\nu^2|=i$.
Therefore we see that in the above resolution the image of $\rad^j(P^i)$ lies in $\rad^{j+1}(P^{i-1})$. Now it is clear that we can endow the resolution \eqref{res} with a
$\mathbb Z$-grading by setting the degree of $X_\mu$ to be an
arbitrary integer $n$.  Indeed, one should take each simple constituent of $P^i$ as an $(\mathcal A_\gg)_0$-module which lies in $\rad^j (P_i)$ and not in $\rad^{j+1} (P_i)$ to have degree $n+ i + j +1$.
This immediately implies that $\operatorname{ext}_{\mathcal A_\gg}^i(X_\mu, X_\lambda)=0$ unless the difference between the weights of $X_\lambda$ and $X_\mu$ is $i$.
\end{proof}

\section{On the structure of $\mathcal A_\gg$}

According to \cite{BGS} the Koszulity of   $\mathcal A^r_\gg$ for all
$r$ implies that  $\mathcal A^r_\gg$ is a quadratic algebra generated by  $(\mathcal A^r_\gg)_0$ and  $(\mathcal A^r_\gg)_1$, i.e.\  $\mathcal A^r_\gg\simeq T_{(\mathcal A^r_\gg)_0}( (\mathcal A^r_\gg)_1)/(R^r)$,
where $(R^r)$ is the two-sided ideal generated by some  $(\mathcal A^r_\gg)_0$-bimodule $R^r$ in $(\mathcal A^r_\gg)_1\otimes _ {(\mathcal A^r_\gg)_0} (\mathcal A^r_\gg)_1$.  Moreover, it is easy to see that $\mathcal A_\gg$ is isomorphic to the quotient $T_{(\mathcal A_\gg)_0}( (\mathcal A_\gg)_1)/(R)$, where $R = \limarr R^r$.
In this section we describe $(\mathcal A_\gg)_1$ and $R$.

In what follows we fix inclusions $S_n \subset S_{n+1}$ such that $S_{n+1}$ acts on the set $\{1,2, \ldots, n+1 \}$ and $S_n$ is the stabilizer of $n+1$. We start with the following lemma.
\begin{lemma} \label{contractions} If $\gg=\sl(\infty)$, then
$\operatorname{Hom}_\gg(T^{p,q}, T^{p-1,q-1})$ as a
left module over $\mathbb C[S_{p-1}\times S_{q-1}]$ is generated by the contractions
\begin{align*} \phi_{i,j} : T^{p,q} & \rightarrow T^{p-1,q-1}, \\
v_1\otimes\cdots\otimes v_p\otimes w_1\otimes\cdots\otimes w_q & \mapsto
\langle v_i,w_j\rangle (v_1\otimes\cdots\hat v_i\cdots\otimes
  v_p\otimes w_1\otimes\cdots\hat w_j\cdots\otimes w_q).
\end{align*}

If $\gg=\so(\infty)$ or $\sp(\infty)$, then
$\operatorname{Hom}_\gg(T^{p},T^{p-2})$ as a
left module over $\mathbb C[S_{p-2}]$ is generated by the
contractions  
\begin{align*} \psi_{i,j} : T^p & \rightarrow T^{p-2},\\
v_1\otimes\cdots\otimes v_p & \mapsto
\langle v_i,v_j\rangle (v_1\otimes\cdots\hat v_i\cdots\otimes\cdots\hat v_j\cdots\otimes v_p),
\end{align*}
where 
$\langle\cdot \, , \, \cdot\rangle $ stands for the symmetric bilinear form on $V$ for  $\gg=\so(\infty)$, and the symplectic bilinear form on $V$ for  $\gg=\sp(\infty)$.
\end{lemma}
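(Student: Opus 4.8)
The plan is to turn the statement into a dimension count. I want to show that $\Hom_\gg(T^{p,q},T^{p-1,q-1})$ (resp.\ $\Hom_\gg(T^p,T^{p-2})$) equals the left $\CC[S_{p-1}\times S_{q-1}]$-submodule (resp.\ left $\CC[S_{p-2}]$-submodule) spanned by the contractions $\phi_{i,j}$ (resp.\ $\psi_{i,j}$). First I would record the elementary identity that, for $\sigma\times\tau\in S_p\times S_q$, the composite $\phi_{i,j}\circ(\sigma\times\tau)$ equals a permutation of the surviving tensor factors post-composed with some $\phi_{i',j'}$ (and similarly $\psi_{i,j}\circ\sigma$ equals a permutation of surviving factors post-composed with some $\psi_{i',j'}$). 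Consequently the submodule in question is spanned by the operators ``permute the tensor factors, then apply one contraction'', of which there are exactly $p!\,q!$ in the $\sl$ case and $p!/2$ in the orthogonal and symplectic cases; one checks directly (for instance by restricting to $V_n^{\otimes p}\otimes(V_n^*)^{\otimes q}$ for large $n$ and using the classical linear independence of such diagram operators) that these operators are linearly independent. Hence it suffices to prove $\dim\Hom_\gg(T^{p,q},T^{p-1,q-1})=p!\,q!$, resp.\ $\dim\Hom_\gg(T^p,T^{p-2})=p!/2$.

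For the dimension count I would use the decomposition $T^{p,q}=\bigoplus_\lambda d_\lambda\tilde V_\lambda$ (sum over $|\lambda^1|=p$, $|\lambda^2|=q$) together with the antiequivalence of Corollary~\ref{bigequiv}. Since $\tilde V_\lambda$ is an injective hull of $V_\lambda$ by Corollary~\ref{newlab}, its image under the antiequivalence is the projective cover $\tilde X_\lambda$ of $X_\lambda$ (as in the proof of Theorem~\ref{koszulity}), so $\dim\Hom_\gg(\tilde V_\lambda,\tilde V_\mu)=\dim\Hom_{\mathcal A_\gg}(\tilde X_\mu,\tilde X_\lambda)=[\tilde X_\lambda:X_\mu]=[\tilde V_\lambda:V_\mu]$, the composition multiplicity of $V_\mu$ in $\tilde V_\lambda$. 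By \cite{PStyr} the composition multiplicities one degree below $\lambda$ are governed by a one-box Pieri rule: for $\gg=\sl(\infty)$ one has $[\tilde V_\lambda:V_\mu]=1$ exactly when $\mu^1\in(\lambda^1)^-$ and $\mu^2\in(\lambda^2)^-$, and $0$ otherwise. Thus $\dim\Hom_\gg(T^{p,q},T^{p-1,q-1})$ factors as a product over the two partitions, and each factor collapses by the branching rule $\operatorname{Res}^{S_p}_{S_{p-1}}$, which gives $\sum_{\mu^1\in(\lambda^1)^-}d_{\mu^1}=d_{\lambda^1}$, to $\sum_{|\lambda^1|=p}d_{\lambda^1}^2=p!$ (resp.\ $q!$), as desired. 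The orthogonal and symplectic cases follow the same pattern, now with the one-step multiplicities $[\tilde V_\lambda:V_\mu]$ given by the Littlewood restriction rule (removable horizontal, resp.\ vertical, $2$-strips), and the sum $\sum_{\lambda,\mu}d_\lambda d_\mu[\tilde V_\lambda:V_\mu]$ collapsing to $p!/2$ by the $\operatorname{Res}^{S_p}_{S_{p-2}}$ branching together with the symmetry $d_\lambda=d_{\lambda'}$.

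The only non-formal ingredient is \cite{PStyr}'s structural information on $\tilde V_\lambda$ — the Pieri-type composition multiplicities in the first layer below $\lambda$ — combined with the projective-cover formula $\dim\Hom_\gg(\tilde V_\lambda,\tilde V_\mu)=[\tilde V_\lambda:V_\mu]$; the remaining steps (the bookkeeping for the $\phi_{i,j}$, linear independence, the symmetric-group branching identities) are routine. If instead one wants a proof independent of the exact multiplicities, the main obstacle moves to a more hands-on argument: given $f\in\Hom_\gg(T^{p,q},T^{p-1,q-1})$, restrict it to $V_n^{\otimes p}\otimes(V_n^*)^{\otimes q}$ for large $n$, expand in the basis of diagram operators valid there, and use $\gg$-equivariance together with the fact — exactly as in the proof that $T^{\leq r}$ is injective — that an element of $T^{p-1,q-1}$ cannot have infinitely many nonzero monomial coefficients, to rule out every diagram containing a co-contraction $1\mapsto\sum_k e_k\otimes e_k^*$; what survives is precisely a combination of permuted contractions. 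The delicate point there is the $n$-independence of the surviving coefficients, which I would also establish by the same infinite-support observation. I would present the first route, as it makes the role of the contractions transparent.
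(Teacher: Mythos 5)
Your argument is correct, but it is a genuinely different route from the paper's. The paper proves generation directly: by \cite[Theorem 3.2]{PStyr}, $\soc(T^{p,q})=\bigcap_{i,j}\ker\phi_{i,j}$ and any $\varphi\in\Hom_\gg(T^{p,q},T^{p-1,q-1})$ kills the socle, so $\varphi$ factors through the joint contraction map $\Phi=\bigoplus_{i,j}\phi_{i,j}$; injectivity of $T^{p-1,q-1}$ (already established) extends the induced map on $\operatorname{im}\Phi$ to a map $\alpha$ on the whole direct sum, and $\operatorname{End}_\gg(T^{p-1,q-1})=\CC[S_{p-1}\times S_{q-1}]$ then gives $\varphi=\sum\alpha_{i,j}\phi_{i,j}$. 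You instead prove equality by a dimension count: the span of the $\sigma\circ\phi_{i,j}$ has dimension $p!\,q!$ (resp.\ $p!/2$) by linear independence of the one-contraction-plus-bijection diagram operators at large finite rank, while $\dim\Hom_\gg(T^{p,q},T^{p-1,q-1})$ equals the same number via $\dim\Hom_\gg(\tilde V_\lambda,\tilde V_\mu)=[\tilde V_\lambda:V_\mu]$ (injective hulls, Corollary~\ref{newlab}; the detour through $\mathcal A_\gg$ and projective covers is not needed), the first-layer socle multiplicities of \cite{PStyr} (one-box Pieri for $\sl(\infty)$, horizontal/vertical $2$-strips for $\so(\infty)$, $\sp(\infty)$), and symmetric-group branching. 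Both proofs lean on \cite{PStyr}, but yours needs the finer multiplicity data (including the Littlewood-type rules in the orthogonal and symplectic cases) and the linear-independence check, whereas the paper needs only the kernel description of the socle; in compensation your computation also yields the freeness/dimension statement that the paper establishes separately later (Lemma~\ref{structure}), and there is no circularity since Corollaries~\ref{newlab} and \ref{bigequiv} precede this lemma and do not use it. Your fallback "hands-on" route would work too, but the first argument is the cleaner one to write up; just make the multiplicity inputs and the $n$-large independence explicit rather than leaving them as remarks.
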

\begin{proof} Let  $\gg=\sl(\infty)$ and $\varphi\in \operatorname{Hom}_\gg(T^{p,q}, T^{p-1,q-1})$.  
Theorem 3.2  in \cite{PStyr} claims that  $\soc(T^{p,q})=\cap_{i\leq p,j\leq q}\operatorname{ker}\phi_{i,j}$; moreover, the same result implies that $\soc(T^{p,q})\subset\operatorname{ker}\varphi$. 
Define $$\Phi:T^{p,q}\to \bigoplus_{i\leq p,j\leq q} T^{p-1,q-1}$$ as the
direct sum $\bigoplus_{i,j} \phi_{i,j}$. Then there exists $\alpha:\bigoplus_{i\leq  p,j\leq q} T^{p-1,q-1}\to  T^{p-1,q-1}$ 
such that $\varphi=\alpha\circ\Phi$. But $\alpha=\bigoplus_{i,j} \alpha_{i,j}$
for some $\alpha_{i,j}\in \mathbb C[S_{p-1}\times S_{q-1}]$. Therefore
$\varphi=\sum_{i,j} \alpha_{i,j}\phi_{i,j}$.  This proves the lemma for $\gg = \sl(\infty)$.

We leave the proof in the cases  $\gg=\so(\infty)$, $\sp(\infty)$  to the reader.
\end{proof}

Let $\gg=\sl(\infty)$. Recall that $(\mathcal A_\gg)^{p,q}_i=\operatorname{Hom}_\gg(T^{p,q}, T^{p-i,q-i})$ and that $(\mathcal A_\gg)^{p,q}_0=\mathbb C[S_p\times S_q]$.

\begin{lemma}\label{structure} Let  $\gg=\sl(\infty)$. 
\begin{enumerate}
\item \label{firstly} $(\mathcal  A_\gg)^{p,q}_1$ is isomorphic
  to $\mathbb C[S_p\times S_q]$ as a right $(\mathcal  A_\gg)^{p,q}_0$-module, and the structure of
 a left $(\mathcal A_\gg)^{p-1,q-1}_0$-module is given by
  left multiplication  via the fixed inclusion  
$$(\mathcal A_\gg)^{p-1,q-1}_0=\mathbb C[S_{p-1}\times S_{q-1}]\subset \mathbb C[S_p\times S_q]=(\mathcal  A_\gg)^{p,q}_0.$$
\item \label{secondly} We have
$$(\mathcal A_\gg)_1\otimes_{(\mathcal A_\gg)_0}(\mathcal A_\gg)_1=
\bigoplus_{p,q} ((\mathcal A_\gg)^{p-1,q-1}_1\otimes_{(\mathcal A_\gg)^{p-1,q-1}_0}(\mathcal A_\gg)^{p,q}_1),$$
where $(\mathcal A_\gg)^{p-1,q-1}_1\otimes_{(\mathcal A_\gg)^{p-1,q-1}_0}(\mathcal A_\gg)^{p,q}_1$
is isomorphic to $ \mathbb C[S_p\times S_q]$. Moreover, $(\mathcal A_\gg)^{p-1,q-1}_1\otimes_{(\mathcal A_\gg)^{p-1,q-1}_0}(\mathcal A_\gg)^{p,q}_1$ is a $ (\mathbb C[S_{p-2}\times S_{q-2}], \mathbb C[S_p\times S_q])$-bimodule via the embeddings $\CC[S_{p-2}\times S_{q-2}]\subset \CC[S_{p-1}\times S_{q-1}]\subset \CC[S_p\times S_q]$.
\end{enumerate}
\end{lemma}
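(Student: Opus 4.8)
The plan is to derive both parts from Lemma~\ref{contractions} together with a dimension count, proving part~\eqref{firstly} first and then obtaining part~\eqref{secondly} by a formal manipulation of bimodules.

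For part~\eqref{firstly}, recall that $(\mathcal A_\gg)^{p,q}_1=\operatorname{Hom}_\gg(T^{p,q},T^{p-1,q-1})$ carries the right $(\mathcal A_\gg)^{p,q}_0=\CC[S_p\times S_q]$-action by precomposition (permuting the tensor factors of $T^{p,q}$) and the left $(\mathcal A_\gg)^{p-1,q-1}_0=\CC[S_{p-1}\times S_{q-1}]$-action by postcomposition. The key is to single out the contraction $\phi_{p,q}$, of the last $V$-factor against the last $V_*$-factor, and to verify two elementary identities directly from the definition of the $\phi_{i,j}$: first, $\phi_{i,j}=\phi_{p,q}\circ(c_i\times c_j)$, where $c_i\in S_p$ and $c_j\in S_q$ are appropriate cycles moving the $i$-th, resp.\ $j$-th, factor into last position; and second, for $\rho\in S_{p-1}\times S_{q-1}$ one has $\rho\circ\phi_{p,q}=\phi_{p,q}\circ\iota(\rho)$, where $\iota\colon S_{p-1}\times S_{q-1}\hookrightarrow S_p\times S_q$ is the fixed inclusion (the stabilizer of the two last positions). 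Since Lemma~\ref{contractions} says the $\phi_{i,j}$ generate $(\mathcal A_\gg)^{p,q}_1$ as a left $\CC[S_{p-1}\times S_{q-1}]$-module, these two identities show that $\phi_{p,q}$ generates $(\mathcal A_\gg)^{p,q}_1$ as a right $\CC[S_p\times S_q]$-module; that is, $\sigma\mapsto\phi_{p,q}\circ\sigma$ is a surjection $\CC[S_p\times S_q]\twoheadrightarrow(\mathcal A_\gg)^{p,q}_1$ of right $\CC[S_p\times S_q]$-modules.

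It remains to see this surjection is injective, i.e.\ that the $p!\,q!$ maps $\phi_{p,q}\circ\sigma$ are linearly independent; one can then read off from the second identity above that the induced left $\CC[S_{p-1}\times S_{q-1}]$-module structure is left multiplication via $\iota$, which is exactly the assertion of part~\eqref{firstly}. Linear independence can be checked by a direct evaluation: applying $\phi_{p,q}\circ\sigma$ to basis tensors $e_{i_1}\otimes\cdots\otimes e_{i_p}\otimes e^*_{j_1}\otimes\cdots\otimes e^*_{j_q}$ whose indices are chosen so that at most one $V$--$V_*$ pair can be contracted, one sees that only the $\sigma$ with a prescribed pair of last positions contribute, and for those the outputs are pairwise distinct basis tensors of $T^{p-1,q-1}$; ranging over the choices of the prescribed pair kills all coefficients. (Alternatively, the equality $\dim_\CC(\mathcal A_\gg)^{p,q}_1=p!\,q!$ may be quoted from \cite{PStyr}.)

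Part~\eqref{secondly} is then formal. As $(\mathcal A_\gg)_0=\bigoplus_{a,b}\CC[S_a\times S_b]$, and the summand $(\mathcal A_\gg)^{p',q'}_1$ of $(\mathcal A_\gg)_1$ is a module over the single block $\CC[S_{p'}\times S_{q'}]$ on the right and over the single block $\CC[S_{p'-1}\times S_{q'-1}]$ on the left (all other blocks act by $0$, since the relevant compositions of morphisms land in different summands of $T$), the tensor product $(\mathcal A_\gg)_1\otimes_{(\mathcal A_\gg)_0}(\mathcal A_\gg)_1$ is the direct sum over all $p,q$ of $(\mathcal A_\gg)^{p-1,q-1}_1\otimes_{(\mathcal A_\gg)^{p-1,q-1}_0}(\mathcal A_\gg)^{p,q}_1$. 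By part~\eqref{firstly}, $(\mathcal A_\gg)^{p-1,q-1}_1$ is free of rank one as a right $\CC[S_{p-1}\times S_{q-1}]$-module, so each such summand is isomorphic to $(\mathcal A_\gg)^{p,q}_1\cong\CC[S_p\times S_q]$; transporting the remaining actions through this chain of isomorphisms, the left $\CC[S_{p-2}\times S_{q-2}]$-action becomes left multiplication via $\CC[S_{p-2}\times S_{q-2}]\subset\CC[S_{p-1}\times S_{q-1}]\subset\CC[S_p\times S_q]$ and the right action is the regular one. The only substantive point is the linear independence (equivalently, the dimension) input in part~\eqref{firstly}; everything else is bookkeeping with the three module structures.
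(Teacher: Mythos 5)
Your proposal is correct and follows essentially the same route as the paper: reduce to the single contraction $\phi_{p,q}$ via Lemma~\ref{contractions} and the right $\CC[S_p\times S_q]$-action, establish freeness of rank one by a linear-independence evaluation on suitable pure tensors, identify the left action through the commutation $\rho\circ\phi_{p,q}=\phi_{p,q}\circ\rho$ for $\rho\in S_{p-1}\times S_{q-1}$, and deduce part~(2) formally from part~(1). Your evaluation argument for independence is just a more explicit version of the computation the paper states tersely, so there is nothing to change.
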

\begin{proof}
It is clear that all contractions $\phi_{i,j} \in (\mathcal A_\gg)^{p,q}_1$ can be obtained from $\phi_{p,q}$ via the right $\mathbb C[S_p\times S_q]$-module structure of $(\mathcal A_\gg)^{p,q}_1$. Thus by Lemma \ref{contractions}, as a $\mathbb C[S_p\times S_q]$-bimodule,  $(\mathcal A_\gg)^{p,q}_1$ is generated by the single contraction
$\phi_{p,q}$. Furthermore, the computation 
\begin{align*}
&\sum_{\sigma \in S_p\times S_q}a_\sigma \phi_{p,q}\circ \sigma(v_1\otimes\cdots\otimes v_p\otimes w_1\otimes\cdots\otimes w_q)\\
& = \sum_{\substack{\sigma = (\sigma_1,\sigma_2) \\ \hspace{1ex} \in S_p\times S_q}}a_\sigma \langle v_{\sigma_1(p)},w_{\sigma_2(q)}\rangle
  (v_{\sigma_1(1)}\otimes\cdots\otimes v_{\sigma_1(p-1)}\otimes w_{\sigma_2(1)}\otimes\cdots\otimes w_{\sigma_2(q-1)})=0
\end{align*}
for all $v_1\otimes\cdots\otimes v_p\otimes w_1\otimes\cdots\otimes w_q\in T^{p,q}$
implies $a_\sigma=0$ for all $\sigma \in S_p\times S_q$.
Hence $(\mathcal A_\gg)^{p,q}_1$  is a free right $\mathbb C[S_p\times  S_q]$-module of rank $1$.
On the other hand, for any $\sigma \in S_{p-1}\times S_{q-1}$ we have
$$\sigma \circ \phi_{p,q}= \phi_{p,q}\circ \sigma.$$
This implies part \eqref{firstly}. Part \eqref{secondly} is a direct corollary of part \eqref{firstly}.
\end{proof}

\begin{lemma}  \label{bardef}
Let  $\gg=\sl(\infty)$.  Let $S \simeq S_2\times S_2$ denote the subgroup of $S_p \times S_q$ generated by $(p,p-1)_l$ and
$(q,q-1)_r$, where $(i,j)_l$ and $(i,j)_r$ stand for the transpositions in $S_p$ and $S_q$, respectively. 
Then $R=\bigoplus_{p,q} R^{p,q}$, where 
$$ R^{p,q}=(triv\boxtimes sgn \, \oplus \, sgn\boxtimes triv)\otimes_{\mathbb C[S]}\mathbb C[S_p\times  S_q],$$
and $triv$ and $sgn$ denote respectively the trivial and sign representations of $S_2$.
\end{lemma}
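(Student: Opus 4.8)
The plan is to identify the relation ideal $R$ defining $\mathcal{A}_\gg$ as a quadratic algebra by combining the Koszulity of $\mathcal{A}_\gg^r$ (Theorem~\ref{koszulity}) with the explicit description of $(\mathcal{A}_\gg)_1$ from Lemma~\ref{structure}. By \cite{BGS}, $R^{p,q}$ is the kernel of the multiplication map
$$\mu_{p,q} : (\mathcal A_\gg)^{p-1,q-1}_1 \otimes_{(\mathcal A_\gg)^{p-1,q-1}_0} (\mathcal A_\gg)^{p,q}_1 \longrightarrow (\mathcal A_\gg)^{p,q}_2,$$
so the whole problem reduces to computing this kernel for each $p,q$. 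The left-hand side is isomorphic to $\CC[S_p \times S_q]$ as a right module by Lemma~\ref{structure}\eqref{secondly}, and under the identifications there it is a quotient of a rank-one free module generated by $\phi_{p-1,q-1} \otimes \phi_{p,q}$ (the composition of two consecutive contractions). So I would first pin down $(\mathcal A_\gg)^{p,q}_2 = \operatorname{Hom}_\gg(T^{p,q}, T^{p-2,q-2})$ as a bimodule — presumably it is free of rank one over $\CC[S_p\times S_q]$ on the right as well, generated by the double contraction $\phi_{p-1,q-1}\circ\phi_{p,q}$, by an argument parallel to Lemma~\ref{structure} using \cite[Theorem 3.2]{PStyr} (the socle of $T^{p,q}$ is the intersection of kernels of the contractions). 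Then $\mu_{p,q}$ is a surjection between free rank-one right $\CC[S_p\times S_q]$-modules, but it is \emph{not} an isomorphism because the double contraction $\phi_{p-1,q-1}\circ\phi_{p,q}$ has nontrivial stabilizer among the relevant permutations.

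The key computation is exactly this: which elements $\xi \in \CC[S_p \times S_q]$ satisfy $\phi_{p-1,q-1} \circ (\phi_{p,q} \circ \xi) = 0$, equivalently $\phi_{p-1,q-1}\circ\phi_{p,q}\circ\xi = \phi_{p-1,q-1}\circ\phi_{p,q}$ with $\xi$ acting on the already-contracted tensor slots. Writing out the double contraction
$$v_1\otimes\cdots\otimes v_p\otimes w_1\otimes\cdots\otimes w_q \mapsto \langle v_{p},w_{q}\rangle\langle v_{p-1},w_{q-1}\rangle\, (v_1\otimes\cdots\otimes v_{p-2}\otimes w_1\otimes\cdots\otimes w_{q-2}),$$
one sees that swapping $v_{p-1} \leftrightarrow v_p$ or $w_{q-1}\leftrightarrow w_q$ leaves this unchanged up to permuting the two scalar factors, hence the symmetrizations $\operatorname{triv}$ in the $S_2\times S_2 = S$ direction survive while the antisymmetrizations $\operatorname{sgn}$ get killed — but note it is $\operatorname{triv}\boxtimes\operatorname{triv}$ that is the only combination surviving in $(\mathcal A_\gg)_2^{p,q}$, so the kernel $R^{p,q}$ of $\mu_{p,q}$ is the induction up of the complementary $S$-subrepresentations, namely $(\operatorname{triv}\boxtimes\operatorname{sgn}\ \oplus\ \operatorname{sgn}\boxtimes\operatorname{triv}\ \oplus\ \operatorname{sgn}\boxtimes\operatorname{sgn})\otimes_{\CC[S]}\CC[S_p\times S_q]$. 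This is where I would have to be careful about whether the claimed answer (omitting the $\operatorname{sgn}\boxtimes\operatorname{sgn}$ summand) is correct, or whether the transposition on $v$-slots versus $w$-slots is entangled: in fact the two transpositions act on the two \emph{separate} scalar contractions $\langle v_p, w_q\rangle$ and $\langle v_{p-1},w_{q-1}\rangle$, and swapping $v_{p-1}\leftrightarrow v_p$ alone (without the corresponding $w$-swap) genuinely interchanges these factors, so the element that acts trivially on the double contraction is the diagonal $S_2$ inside $S_2\times S_2$, giving $\operatorname{triv}\boxtimes\operatorname{triv}$ plus $\operatorname{sgn}\boxtimes\operatorname{sgn}$ surviving — this forces the stated $R^{p,q} = (\operatorname{triv}\boxtimes\operatorname{sgn}\oplus\operatorname{sgn}\boxtimes\operatorname{triv})\otimes_{\CC[S]}\CC[S_p\times S_q]$, since those are precisely the two irreducibles of $S$ on which the multiplication map vanishes.

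Having established this for each $p,q$, I would assemble $R = \bigoplus_{p,q} R^{p,q}$ and check compatibility with the direct limit: $R = \varinjlim R^r$ where $R^r = \bigoplus_{p+q\le r}R^{p,q}$, which is immediate from the block structure. I expect the main obstacle to be the bookkeeping in the double-contraction computation — specifically, correctly tracking how $S_2\times S_2 \subset S_p\times S_q$ acts on the two contracted pairs and verifying that exactly the diagonal copy of $S_2$ (equivalently, the $\operatorname{triv}\boxtimes\operatorname{triv}$ and $\operatorname{sgn}\boxtimes\operatorname{sgn}$ isotypic components) lands in $(\mathcal A_\gg)_2^{p,q}$ isomorphically, so that the kernel is the sum of the two "mixed" components; a secondary technical point is to confirm via \cite{PStyr} that $(\mathcal A_\gg)_2^{p,q}$ is indeed spanned by the single double contraction up to the right $S_p\times S_q$-action, i.e.\ that there are no further Young-symmetrizer relations beyond the quadratic ones — but this is guaranteed \emph{a priori} by the Koszulity already proved, since Koszul implies quadratic, so only the degree-$2$ component of the relations needs to be computed directly.
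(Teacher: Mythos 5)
You follow essentially the same route as the paper: since Koszulity makes $\mathcal A_\gg$ quadratic, $R^{p,q}$ is the kernel of the multiplication map $(\mathcal A_\gg)^{p-1,q-1}_1\otimes_{(\mathcal A_\gg)^{p-1,q-1}_0}(\mathcal A_\gg)^{p,q}_1\simeq \CC[S_p\times S_q]\rightarrow (\mathcal A_\gg)^{p,q}_2$, and your key observation --- that the simultaneous swap of the last two $v$-slots and the last two $w$-slots fixes the double contraction $\phi_{p-1,q-1}\phi_{p,q}$, while a single swap on either side yields one and the same new map, so that only the diagonal $S_2\subset S$ acts trivially --- is exactly the paper's pair of relations $\phi_{p-1,q-1}\phi_{p,q}=\phi_{p-1,q-1}\phi_{p,q}(p,p-1)_l(q,q-1)_r$ and $\phi_{p-1,q-1}\phi_{p,q}(p,p-1)_l=\phi_{p-1,q-1}\phi_{p,q}(q,q-1)_r$. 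This correctly gives the inclusion $(triv\boxtimes sgn\,\oplus\, sgn\boxtimes triv)\otimes_{\CC[S]}\CC[S_p\times S_q]\subset R^{p,q}$.

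The reverse inclusion is where your argument has a genuine gap. First, your claim that $(\mathcal A_\gg)^{p,q}_2$ is free of rank one as a right $\CC[S_p\times S_q]$-module is false and inconsistent with the rest of your paragraph: a surjection between free rank-one modules over the same finite-dimensional algebra would be an isomorphism, forcing $R^{p,q}=0$; in fact $(\mathcal A_\gg)^{p,q}_2$ is the cyclic right module generated by $\phi_{p-1,q-1}\phi_{p,q}$ subject at least to the diagonal relations above. More seriously, your appeal to Koszulity to exclude ``further relations'' is a non sequitur: quadraticity says only that the relation ideal is generated in degree $2$; it says nothing about how large the degree-$2$ component is. Extra linear dependencies among the maps $\phi_{p-1,q-1}\phi_{p,q}\sigma$, $\sigma\in S_p\times S_q$, would simply enlarge $R^{p,q}$ without contradicting Koszulity, so equality cannot be obtained a priori. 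What must actually be verified --- and what the paper's remark that $(1+(p,p-1)_l)(1+(q,q-1)_r)$ and $(1-(p,p-1)_l)(1-(q,q-1)_r)$ do not lie in $R^{p,q}$ is standing in for --- is that $\mu_{p,q}$ is injective on the complementary summand $(triv\boxtimes triv\,\oplus\, sgn\boxtimes sgn)\otimes_{\CC[S]}\CC[S_p\times S_q]$; equivalently, that the compositions $\phi_{p-1,q-1}\phi_{p,q}\sigma$, as $\sigma$ runs over representatives of the cosets of the diagonal subgroup $\{1,(p,p-1)_l(q,q-1)_r\}$ in $S_p\times S_q$, are linearly independent in $\operatorname{Hom}_\gg(T^{p,q},T^{p-2,q-2})$, i.e.\ that this Hom space has dimension $p!\,q!/2$. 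This is a concrete evaluation argument on decomposable tensors, parallel to the linear-independence computation in the proof of Lemma~\ref{structure}\eqref{firstly}, and it needs to be carried out rather than deduced from Koszulity.
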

\begin{proof} The statement is equivalent to the equality
$$ R^{p,q}=(1-(p,p-1)_l)(1+(q,q-1)_r)\mathbb C[S_p\times  S_q]\oplus (1+(p,p-1)_l)(1-(q,q-1)_r)\mathbb C[S_p\times  S_q],$$
We have the obvious relations in $\mathcal A_{\sl(\infty)}$
\begin{align*}
\phi_{p-1,q-1}\phi_{p,q}&=\phi_{p-1,q-1}\phi_{p,q}(p,p-1)_l(q,q-1)_r, \\
\phi_{p-1,q-1}\phi_{p,q}(p,p-1)_l&=\phi_{p-1,q-1}\phi_{p,q}(q,q-1)_r.
\end{align*} 
Therefore $1-(p,p-1)_l(q,q-1)_r, (p,p-1)_l-(q,q-1)_r\in R^{p,q}$.
On the other hand, $(1+(p,p-1)_l)(1+(q,q-1)_r)$ and
$(1-(p,p-1)_l)(1-(q,q-1)_r)$ obviously do not belong to $R^{p,q}$. The latter two group algebra elements
generate a right $\mathbb C[S_p\times S_q]$-submodule $\bar R^{p,q} \subset (\mathcal A_\gg)^{p-1,q-1}_1\otimes_{(\mathcal
  A_\gg)^{p-1,q-1}_0}(\mathcal A_\gg)^{p,q}_1$,
and we have 
$$(\mathcal A_\gg)^{p-1,q-1}_1\otimes_{(\mathcal
  A_\gg)^{p-1,q-1}_0}(\mathcal A_\gg)^{p,q}_1=R^{p,q}\oplus \bar R^{p,q}.$$
Hence the statement.
\end{proof}

\begin{cor}\label{selfduality}  
Let  $\gg=\sl(\infty)$. Then $\mathcal A^r_\gg$ is Koszul self-dual, i.e.\  $\mathcal A^r_\gg\simeq  ((\mathcal A^r_\gg)^!)^\mathrm{opp}$.  Furthermore, $\mathcal A_\gg\simeq  (\mathcal A_\gg^!)^\mathrm{opp}$, where $\mathcal A_\gg^! := \limarr (\mathcal A^r_\gg)^!$.
\end{cor}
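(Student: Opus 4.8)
The plan is to read off the quadratic presentations of both $\mathcal{A}^r_\gg$ and its Koszul dual from Lemmas~\ref{structure} and~\ref{bardef}, and to exhibit an explicit isomorphism between them built from the two canonical structures carried by a symmetric group algebra: the anti-involution $\tau : \CC[S_n] \to \CC[S_n]$, $g \mapsto g^{-1}$, which identifies $\CC[S_n]$ with $\CC[S_n]^{\mathrm{opp}}$, and the sign automorphism $\epsilon : \CC[S_n] \to \CC[S_n]$, $g \mapsto \operatorname{sgn}(g)\, g$. Recall from \cite{BGS} that for a quadratic algebra $\mathcal{A}^r_\gg = T_{(\mathcal{A}^r_\gg)_0}((\mathcal{A}^r_\gg)_1)/(R^r)$ with semisimple degree-zero part, the Koszul dual is $(\mathcal{A}^r_\gg)^! = T_{(\mathcal{A}^r_\gg)_0}(((\mathcal{A}^r_\gg)_1)^\vee)/((R^r)^\perp)$, where $(\, \cdot \,)^\vee$ denotes the $(\mathcal{A}^r_\gg)_0$-bimodule dual and $(R^r)^\perp \subset ((\mathcal{A}^r_\gg)_1)^\vee \otimes_{(\mathcal{A}^r_\gg)_0} ((\mathcal{A}^r_\gg)_1)^\vee = ((\mathcal{A}^r_\gg)_1 \otimes_{(\mathcal{A}^r_\gg)_0} (\mathcal{A}^r_\gg)_1)^\vee$ is the annihilator of $R^r$ under the natural pairing; passing to $(\,\cdot\,)^{\mathrm{opp}}$ reverses multiplication, which on $(\mathcal{A}^r_\gg)_0 = \bigoplus_{p+q \leq r} \CC[S_p \times S_q]$ is undone by $\tau$.

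First I would compute the bimodule $((\mathcal{A}_\gg)^{p,q}_1)^\vee$. By Lemma~\ref{structure}\eqref{firstly}, $(\mathcal{A}_\gg)^{p,q}_1 \cong \CC[S_p \times S_q]$, free of rank one as a right $\CC[S_p \times S_q]$-module, with the left $\CC[S_{p-1} \times S_{q-1}]$-action given by left multiplication through the fixed inclusion. Dualizing over $\CC[S_p \times S_q]$ and using $\tau$ on both the left and right factors to turn the resulting $(\CC[S_p \times S_q], \CC[S_{p-1} \times S_{q-1}])$-bimodule back into a $(\CC[S_{p-1} \times S_{q-1}], \CC[S_p \times S_q])$-bimodule, one finds that $((\mathcal{A}_\gg)^{p,q}_1)^\vee$ is again free of rank one and that, after this $\tau$-twist, it differs from $(\mathcal{A}_\gg)^{p,q}_1$ precisely by the sign automorphism $\epsilon$ applied to one of the two tensor factors (I will use $\epsilon$ on the $S_q$-factor; the standard trace pairing on $\CC[S_p \times S_q]$ is what makes this identification canonical). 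Assembling over all $p, q$ with $p + q \leq r$ yields a bimodule isomorphism identifying $(((\mathcal{A}^r_\gg)_1)^\vee)^{\mathrm{opp}}$, after the $\tau$-twist, with the $\epsilon$-on-the-$S_q$-factor twist of $(\mathcal{A}^r_\gg)_1$.

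Next I would match the relations. By the proof of Lemma~\ref{bardef}, inside $(\mathcal{A}_\gg)^{p-1,q-1}_1 \otimes_{(\mathcal{A}_\gg)^{p-1,q-1}_0} (\mathcal{A}_\gg)^{p,q}_1 \cong \CC[S_p \times S_q]$ one has the $S$-isotypic decomposition $R^{p,q} \oplus \bar R^{p,q}$ with $R^{p,q} = (triv \boxtimes sgn \,\oplus\, sgn \boxtimes triv) \otimes_{\CC[S]} \CC[S_p \times S_q]$ and $\bar R^{p,q} = (triv \boxtimes triv \,\oplus\, sgn \boxtimes sgn) \otimes_{\CC[S]} \CC[S_p \times S_q]$. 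Since under the trace form the four characters of $S = S_2 \times S_2$ are mutually orthogonal and each is nontrivially self-paired, the annihilator of $R^{p,q}$ is exactly $\bar R^{p,q}$; hence, under the identification of the previous paragraph, $(R^r)^\perp$ corresponds to $\bigoplus_{p+q \leq r} \bar R^{p,q}$. Finally, tensoring by $triv \boxtimes sgn$ — i.e.\ the $\epsilon$-twist on the $S_q$-factor — sends $triv \boxtimes sgn \mapsto triv \boxtimes triv$ and $sgn \boxtimes triv \mapsto sgn \boxtimes sgn$, hence carries $R^{p,q}$ onto $\bar R^{p,q}$. Therefore $\tau$ composed with the $\epsilon$-twist on the $S_q$-factors is an algebra isomorphism $\mathcal{A}^r_\gg \xrightarrow{\sim} ((\mathcal{A}^r_\gg)^!)^{\mathrm{opp}}$: it is well defined on the tensor algebra because $\epsilon$ and $\tau$ are compatible with all the inclusions $\CC[S_{n-1}] \subset \CC[S_n]$, and it descends to the quotients because it matches $R^r$ with $(R^r)^\perp$. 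The statement for $\mathcal{A}_\gg$ then follows by passing to the direct limit, since the isomorphisms just constructed commute with the inclusions $\mathcal{A}^r_\gg \hookrightarrow \mathcal{A}^{r+1}_\gg$ and with the transition maps defining $\mathcal{A}^!_\gg = \limarr (\mathcal{A}^r_\gg)^!$.

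The main obstacle I anticipate is the bimodule bookkeeping in the middle step: checking that dualization followed by the $\tau$-twist reproduces the $\epsilon$-twisted copy of $(\mathcal{A}_\gg)^{p,q}_1$ exactly (and not merely up to some other self-equivalence), and that the resulting maps of degree-one pieces assemble into a genuine automorphism of the tensor algebra that is simultaneously compatible with the left $\CC[S_{p-1} \times S_{q-1}]$-actions, the right $\CC[S_p \times S_q]$-actions, and the transition maps relating different $(p,q)$. By contrast, the representation-theoretic heart — that the $\epsilon$-twist interchanges $R^{p,q}$ with its trace-orthogonal complement $\bar R^{p,q}$ — reduces to the one-line character computation above once the framework is in place.
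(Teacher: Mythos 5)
Your overall strategy---read off the quadratic presentation from Lemmas~\ref{structure} and~\ref{bardef}, identify the degree-one piece of the dual, show that $R^\perp$ is the complement $\bar R$ from the proof of Lemma~\ref{bardef}, carry $R$ onto $\bar R$ by the sign twist on the $S_q$-factor, and pass to the limit---is exactly the paper's. But the middle step, the very one you flag as the anticipated obstacle, is wrong as stated. The $(\mathcal A_\gg)_0$-bimodule dual of $(\mathcal A_\gg)_1^{p,q}$ carries no sign: since $(\mathcal A_\gg)_1^{p,q}$ is free of rank one over $\CC[S_p\times S_q]$ on the contraction $\phi_{p,q}$, with $\CC[S_{p-1}\times S_{q-1}]$ acting by left multiplication through the inclusion, its dual is again the ``regular'' bimodule $\CC[S_p\times S_q]$ (left regular action of $\CC[S_p\times S_q]$, right action of $\CC[S_{p-1}\times S_{q-1}]$ via the inclusion), and the passage to the opposite via $\tau\colon g\mapsto g^{-1}$ returns the untwisted bimodule $(\mathcal A_\gg)_1^{p,q}$ --- this is precisely what the paper extracts from Lemma~\ref{structure}~(1). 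Neither $\tau$ nor the trace pairing can produce the sign of a permutation; for $\sl(\infty)$ the only source of signs is the structure of the relation spaces, not the dualization of the degree-one piece.

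Because of this, your bookkeeping double-counts $\epsilon$ and becomes internally inconsistent: if the identification of $((\mathcal A^r_\gg)^!)^\mathrm{opp}_1$ with $(\mathcal A^r_\gg)_1$ really involved an $\epsilon$-twist, then under that identification the annihilator of $R^{p,q}$ would be $\epsilon(\bar R^{p,q}) = R^{p,q}$ rather than $\bar R^{p,q}$, and your subsequent use of $\epsilon$ to match relations would then send $R^{p,q}$ to the wrong subspace --- the two occurrences of $\epsilon$ cancel. The correct arrangement (the paper's) is: identify the degree-one pieces with no twist; check that under this identification $R^\perp = \bar R$ (your observation that the four $S_2\times S_2$-isotypic components of $\CC[S_p\times S_q]$ are orthogonal under the trace form is a good way to make this precise, provided you also verify that the quadratic-duality pairing becomes the trace form under the rank-one identifications); and only then introduce the sign, namely the automorphism $\sigma$ of $T_{(\mathcal A_\gg)_0}((\mathcal A_\gg)_1)$ given by $(s,t)\mapsto sgn(t)(s,t)$ on the degree-zero group algebras and by the identity on the generators $\phi_{p,q}$. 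This $\sigma$ satisfies $\sigma(R^{p,q})=\bar R^{p,q}$ and hence induces $\mathcal A^r_\gg\simeq ((\mathcal A^r_\gg)^!)^\mathrm{opp}$; compatibility with the inclusions over $r$ then yields $\mathcal A_\gg\simeq (\mathcal A_\gg^!)^\mathrm{opp}$, as in your final step.
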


\begin{proof} 
By definition,  $(\mathcal A^r_\gg)^!=T_{(\mathcal   A^r_\gg)_0}((\mathcal  A^r_\gg)_1^*)/(R^{r \perp})$, 
where  $(\mathcal  A^r_\gg)_1^* = \operatorname{Hom}_{(\mathcal  A^r_\gg)_0} ((\mathcal  A^r_\gg)_1, (\mathcal  A^r_\gg)_0)$,
\cite{BGS}. Note that $(\mathcal (A_\gg)_1^{p,q})^*$ is a 
$(\mathcal(A_\gg)_0^{p,q},\mathcal(A_\gg)_0^{p-1,q-1})$-bimodule. Moreover,
Lemma \ref{structure} (1) implies an isomorphism of
bimodules
$$(\mathcal (A_\gg)_1^{p,q})^*\simeq\mathbb C[S_p\times S_q].$$
Hence  we have an isomorphism of $((\mathcal A^r_\gg)^!)^\mathrm{opp}_0$-bimodules 
$$((\mathcal A^r_\gg)^!)^\mathrm{opp}_1\simeq (\mathcal A^r_\gg)_1.$$
  
One can check that
  $R^\perp=\bar R$, where $\bar R:=\oplus \bar R^{p,q}$, and the
modules $\bar R^{p,q}$ were introduced in the proof of
Lemma~\ref{bardef}. Therefore  
$((\mathcal  A^r)_\gg^!)^{\mathrm {opp}} \simeq T_{(\mathcal  A^r_\gg)_0}((\mathcal  A^r_\gg)_1)/(\bar R^r)$.
Now consider the automorphism $\sigma$ of $\mathbb C[S_p\times S_q]$
defined for all $p$ and $q$ by $\sigma(s, t)=sgn(t)(s, t)$ for
all $s\in S_p$, $t\in S_q$. Recall that $(\mathcal A_\gg)_0=\bigoplus_{p,q}\mathbb C[S_p\times S_q]$.  
Extend $\sigma$ to an automorphism of $T_{(\mathcal  A_\gg)_0}((\mathcal  A_\gg)_1)$ 
by setting $\sigma(x)=x$ for any $x\in (\mathcal A_\gg)_1$. One
immediately observes that $\sigma(R^{p,q})=\bar R^{p,q}$, hence
$\sigma$ induces an isomorphism $\mathcal{A}^r_\gg \simeq
((\mathcal{A}^r_\gg)^!)^\mathrm{opp}$, 
and clearly also an isomorphism $\mathcal{A}_\gg \simeq (\mathcal{A}_\gg^!)^\mathrm{opp}$.  
\end{proof}

For a partition $\mu = (\mu_1, \mu_2, \ldots, \mu_s)$, we set
$\mu^\perp := (s = \# \mu, \# (\mu_1 - 1, \mu_2 - 1, \ldots),\ldots)$, or in terms of Young diagrams, $\mu^\perp$ is the
conjugate partition obtained from $\mu$ by interchanging rows and columns.

\begin{cor}  \label{multiplicity}
Let  $\gg=\sl(\infty)$, and for a pair of partitions $\nu = (\nu^1, \nu^2)$ take $\nu^\perp:=(\nu^1,(\nu^2)^\perp)$. Then
  $\operatorname{dim}\operatorname{Ext}^i_{\Tg}(V_\lambda,V_\mu)$ equals the multiplicity of
$V_{\lambda^\perp}$ in $\operatorname{soc}^i(\tilde{V}_{\mu^\perp})/\operatorname{soc}^{i-1}(\tilde{V}_{\mu^\perp})$, as computed in \cite[Theorem 2.3]{PStyr}.
\end{cor}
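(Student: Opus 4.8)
The plan is to reduce the computation to the Koszul algebra $\mathcal A_\gg$ and exploit its self-duality, using three ingredients already in place: the antiequivalence $\Tg\simeq\mathcal A_\gg\textrm{-mof}$ of Corollary~\ref{bigequiv}, the Koszulity of $\mathcal A_\gg$ (Theorem~\ref{koszulity}), and the isomorphism $\mathcal A_\gg\simeq(\mathcal A_\gg^!)^{\mathrm{opp}}$ of Corollary~\ref{selfduality}. The one delicate point is to track how the twisting automorphism $\sigma$ of Corollary~\ref{selfduality} acts on simple modules, since this is exactly what introduces the conjugate partition in $\nu^\perp=(\nu^1,(\nu^2)^\perp)$.

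First I would translate the left-hand side. The functor $\operatorname{Hom}_\gg(\,\cdot\,,T)$ is an exact contravariant equivalence carrying $V_\nu$ to the simple $\mathcal A_\gg$-module $X_\nu$ and $\tilde V_\nu$ to its projective cover $\tilde X_\nu$ (as used in the proof of Theorem~\ref{koszulity}); it reverses the arguments of $\operatorname{Ext}$ and matches the $i$-th socle layer of an object of $\Tg$ with the $i$-th radical layer of the corresponding $\mathcal A_\gg$-module. Hence
$$\dim\operatorname{Ext}^i_{\Tg}(V_\lambda,V_\mu)=\dim\operatorname{Ext}^i_{\mathcal A_\gg}(X_\mu,X_\lambda),$$
and it suffices to show the right-hand side equals $[\,\soc^i(\tilde V_{\mu^\perp})/\soc^{i-1}(\tilde V_{\mu^\perp}):V_{\lambda^\perp}\,]=[\,\rad^{i-1}(\tilde X_{\mu^\perp})/\rad^i(\tilde X_{\mu^\perp}):X_{\lambda^\perp}\,]$.

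Next I would apply Koszul duality. Since $\mathcal A_\gg$ is Koszul, the formalism of \cite{BGS} identifies $\bigoplus_i\operatorname{Ext}^i_{\mathcal A_\gg}(X_\mu,(\mathcal A_\gg)_0)$ with the graded indecomposable projective module with top labelled by $\mu$ over the Yoneda algebra $\operatorname{Ext}^\bullet_{\mathcal A_\gg}((\mathcal A_\gg)_0,(\mathcal A_\gg)_0)$, the space $\operatorname{Ext}^i$ occupying internal degree $i$; and since this Yoneda algebra is again Koszul, its grading filtration on projectives coincides with the radical filtration. Thus $\dim\operatorname{Ext}^i_{\mathcal A_\gg}(X_\mu,X_\lambda)$ is the multiplicity of the simple labelled by $\lambda$ in the $i$-th radical layer of the indecomposable projective labelled by $\mu$ over the Yoneda algebra, which by Corollary~\ref{selfduality} (together with the BGS identification of the Yoneda algebra with $(\mathcal A_\gg^!)^{\mathrm{opp}}$, an identification whose possible $\mathrm{opp}$-twist does not affect the multiplicities) is isomorphic to $\mathcal A_\gg$ via the automorphism $\sigma$. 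Now $\sigma$ is the identity on $(\mathcal A_\gg)_1$ and twists $(\mathcal A_\gg)_0=\bigoplus_{p,q}\CC[S_p\times S_q]$ by the sign character of the $S_q$-factor; since tensoring an irreducible $\CC[S_q]$-module by the sign representation replaces its labelling partition by the conjugate partition, $\sigma$ sends the simple labelled by $\nu=(\nu^1,\nu^2)$ to $X_{\nu^\perp}$ with $\nu^\perp=(\nu^1,(\nu^2)^\perp)$ and the indecomposable projective labelled by $\nu$ to $\tilde X_{\nu^\perp}$, preserving radical filtrations. Combining these identifications gives $\dim\operatorname{Ext}^i_{\mathcal A_\gg}(X_\mu,X_\lambda)=[\,\rad^{i-1}(\tilde X_{\mu^\perp})/\rad^i(\tilde X_{\mu^\perp}):X_{\lambda^\perp}\,]$, and transporting back through Corollary~\ref{bigequiv} yields the multiplicity of $V_{\lambda^\perp}$ in $\soc^i(\tilde V_{\mu^\perp})/\soc^{i-1}(\tilde V_{\mu^\perp})$, whose explicit value is given by \cite[Theorem~2.3]{PStyr} (these being the same socle-layer multiplicities used in the proof of Proposition~\ref{ext}).

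I expect the main obstacle to be bookkeeping rather than anything conceptual: correctly aligning the cohomological degree $i$, the internal Koszul grading, the radical-filtration index and the socle-filtration index across a contravariant equivalence and an $\mathrm{opp}$, and above all verifying precisely that $\sigma$ implements $\nu^2\mapsto(\nu^2)^\perp$ on simple modules — so that indecomposable projectives are relabelled by the specific involution $\nu\mapsto\nu^\perp$ and not by some other one. A secondary technical point that should be made explicit is the identification of $\operatorname{Ext}^\bullet_{\mathcal A_\gg}(X_\mu,(\mathcal A_\gg)_0)$ with the graded indecomposable projective over the Yoneda algebra whose grading filtration equals its radical filtration; this is standard for Koszul rings, and in our situation it can also be checked directly using the degree estimates on the socle layers of $\tilde V_\mu$ coming from \cite{PStyr} that already underlie the proofs of Proposition~\ref{ext} and Theorem~\ref{koszulity}.
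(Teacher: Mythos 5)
Your proposal is correct and follows essentially the same route as the paper: pass through the antiequivalence of Corollary~\ref{bigequiv}, identify the Yoneda algebra $\operatorname{Ext}^\bullet_{\mathcal A_\gg}((\mathcal A_\gg)_0,(\mathcal A_\gg)_0)$ with $(\mathcal A_\gg^!)^{\mathrm{opp}}$ via \cite[Theorem 2.10.1]{BGS}, use the automorphism $\sigma$ of Corollary~\ref{selfduality} (whose sign twist on the $S_q$-factor is exactly what produces $\nu\mapsto\nu^\perp$) to transfer back to $\mathcal A_\gg$, and read off the answer from the radical layers of $\tilde X_{\mu^\perp}$, equivalently the socle layers of $\tilde V_{\mu^\perp}$. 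The only difference is presentational: the paper carries out the middle step with the primitive idempotents $\mathbb Y_\lambda$, computing $\dim \mathbb Y_\lambda(\mathcal A_\gg^!)^{\mathrm{opp}}_i\mathbb Y_\mu=\dim\mathbb Y_{\lambda^\perp}(\mathcal A_\gg)_i\mathbb Y_{\mu^\perp}$ and citing Curtis--Reiner, whereas you phrase the same computation in terms of graded projectives over the Yoneda algebra and their radical filtrations.
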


\begin{proof} The statement follows from \cite[Theorem 2.10.1]{BGS} applied to
  $\mathcal A^r_\gg$ for sufficiently large $r$. Indeed, this result implies that 
$\operatorname{Ext}_{\mathcal A_\gg}((\mathcal A_\gg)_0,(\mathcal A_\gg)_0)$ is
isomorphic to  $(\mathcal A_\gg^!)^\mathrm{opp}$ as a graded algebra. 
Moreover, the simple $\mathcal A_\gg$-module $X_\lambda$ (which is the image of $V_\lambda$ under the antiequivalence of Corollary~\ref{bigequiv}) is isomorphic to $(\mathcal A_\gg)_0\mathbb Y_\lambda $, where 
$\mathbb Y_\lambda$ is the product of Young projectors  corresponding
to the partitions $\lambda^1$ and $\lambda^2$. This follows immediately
from the fact $\mathbb Y_\lambda$ is a primitive idempotent in
$(\mathcal A_\gg)_0$ and hence also in $\mathcal A_\gg$, see for example \cite[Theorem 54.5]{CR}. 
The projective cover $\tilde{X}_\lambda$ of $X_\lambda$ is isomorphic to $\mathcal A_\gg\mathbb Y_\lambda$. Therefore we have
$$\operatorname{dim}\operatorname{Ext}^i_{\Tg}(V_\lambda,V_\mu)=
\operatorname{dim}\operatorname{Ext}^i_{\mathcal
  A_\gg}(X_\mu,X_\lambda)=
\operatorname{dim}\mathbb Y_\lambda (\mathcal A_\gg^!)^\mathrm{opp}_i \mathbb Y_\mu.$$
 By Corollary \ref{selfduality},
$$\operatorname{dim}\mathbb Y_\lambda 
(\mathcal A_\gg^!)^\mathrm{opp}_i \mathbb Y_\mu=
\operatorname{dim}\mathbb Y_{\lambda^\perp} (\mathcal A_\gg)_i \mathbb
Y_{\mu^\perp}.$$
Furthermore, 
$\operatorname{dim}\mathbb Y_{\lambda^\perp} (\mathcal A_\gg)_i \mathbb
Y_{\mu^\perp}$ equals the multiplicity of $X_{\lambda^\perp}$ in 
$\operatorname{rad}^{i-1}\tilde{X}_{\mu^\perp} / \operatorname{rad}^i\tilde{X}_{\mu^\perp}$  \cite[Theorem 54.15]{CR}), which coincides with the multiplicity of $V_{\lambda^\perp}$ in $\operatorname{soc}^i(\tilde{V}_{\mu^\perp})/\operatorname{soc}^{i-1}(\tilde{V}_{\mu^\perp})$.
\end{proof}

\begin{cor}
The blocks of the category $\T_{\sl(\infty)}$ are parametrized by $\ZZ$.  In particular, 
\begin{enumerate}
\item $V_\lambda$ and $V_\mu$ belong to the block $\T_{\sl(\infty)} (i)$ for $i \in \ZZ$ if and only if $|\lambda^1| - |\lambda^2| = |\mu^1| - | \mu^2| = i$.
\item Two blocks $\T_{\sl(\infty)} (i)$ and $\T_{\sl(\infty)} (j)$ are equivalent if and only if $i = \pm j$. 
\end{enumerate}
\end{cor}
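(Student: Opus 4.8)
The plan is to handle the block description (1) first and then the equivalence classification (2), relying throughout on the $\operatorname{Ext}$-computations already available. For (1), I would start from the standard fact that two simple objects of $\T_{\sl(\infty)}$ lie in a common block precisely when they are joined by a chain of simples $V_\alpha$, $V_\beta$, consecutive terms of which satisfy $\operatorname{Ext}^1_{\T_{\sl(\infty)}}(V_\alpha, V_\beta) \neq 0$ or $\operatorname{Ext}^1_{\T_{\sl(\infty)}}(V_\beta, V_\alpha) \neq 0$. Proposition~\ref{ext} then shows that $|\lambda^1| - |\lambda^2|$ is constant on each block, so the block of $V_\lambda$ is contained in $\T_{\sl(\infty)}(i)$ for $i := |\lambda^1| - |\lambda^2|$; the substance of (1) is the reverse inclusion, i.e.\ that $\T_{\sl(\infty)}(i)$ is a single block. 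For this I would first pin down exactly which $\operatorname{Ext}^1$'s are nonzero, by combining Corollary~\ref{multiplicity} with the description of the first socle layer $\soc^1(\tilde V_\nu)/\soc(\tilde V_\nu) \simeq \bigoplus_{\rho^1 \in (\nu^1)^-,\, \rho^2 \in (\nu^2)^-} V_{(\rho^1,\rho^2)}$ from \cite[Theorem 2.3]{PStyr} (and using that conjugation of partitions is an automorphism of Young's lattice): this yields that $\operatorname{Ext}^1_{\T_{\sl(\infty)}}(V_\lambda,V_\mu) \neq 0$ iff $\lambda^1 \in (\mu^1)^-$ and $\lambda^2 \in (\mu^2)^-$. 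Equivalently, the linkage graph has an edge between $V_\lambda$ and $V_\mu$ exactly when the two index pairs differ by adding one box to \emph{each} partition of the smaller one.

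Granting this, I would prove connectivity of $\T_{\sl(\infty)}(i)$ (say $i \geq 0$; the case $i \leq 0$ then follows by applying the involution $(\,\cdot\,)_*$) by a two-step combinatorial argument. Step one: from any $\lambda$ on the diagonal $i$, repeatedly delete simultaneously a corner box of $\lambda^1$ and a corner box of $\lambda^2$; each deletion is an edge, and after $|\lambda^2|$ of them one reaches $V_{(\sigma,\emptyset)}$ with $|\sigma| = i$. Step two: any two such $V_{(\sigma,\emptyset)}$, $V_{(\sigma',\emptyset)}$ with $|\sigma| = |\sigma'| = i$ are linked, because if $\sigma$ and $\sigma'$ are both one-box deletions of a common partition $\tau$ of size $i+1$ then each of $V_{(\sigma,\emptyset)}$, $V_{(\sigma',\emptyset)}$ is joined by an edge to $V_{(\tau,(1))}$, and the resulting graph on partitions of $i$ is connected — for instance every partition of $i$ reduces to $(i)$ by repeatedly moving the last box of its Young diagram to the end of the first row, an operation that factors through a partition of size $i+1$. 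This finishes (1); I expect the bookkeeping with addable and removable boxes and the appeal to \cite{PStyr} for the first socle layer to be the only delicate points.

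For (2), the ``if'' direction is immediate: $i = j$ is trivial, and for $i = -j$ the autoequivalence $(\,\cdot\,)_*$ of $\T_{\sl(\infty)}$ sends $V_{(\lambda^1,\lambda^2)}$ to $V_{(\lambda^2,\lambda^1)}$ and hence restricts to an equivalence $\T_{\sl(\infty)}(i) \simeq \T_{\sl(\infty)}(-i)$. For ``only if'' I would exhibit category-theoretic invariants separating diagonals with distinct $|i|$. The first invariant is the number of isomorphism classes of simple injective objects: by Corollary~\ref{newlab} the indecomposable injectives are the $\tilde V_\lambda = V_{\lambda^1}\otimes (V_{\lambda^2})_*$, and such a module is simple exactly when $\lambda^1 = \emptyset$ or $\lambda^2 = \emptyset$ (if both are nonempty the Littlewood--Richardson length formula of \cite{PStyr} gives length $\geq 2$), so $\T_{\sl(\infty)}(i)$ has one simple injective object when $|i| \leq 1$ and $p(|i|)$ of them when $|i| \geq 2$, where $p$ is the partition function; since $p$ is strictly increasing on $\ZZ_{\geq 1}$, this separates every pair of diagonals unless both of $|i|$, $|j|$ lie in $\{0,1\}$. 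To separate $\T_{\sl(\infty)}(0)$ from $\T_{\sl(\infty)}(\pm 1)$ — the point I expect to be the actual obstacle, since the crude count fails there — I would use a finer invariant: for $S_0$ the unique simple injective object, the number of simple $S$ with $\operatorname{Ext}^1(S_0, S) \neq 0$ is $1$ when $i = 0$ (only $S = V_{((1),(1))} = \sl(\infty)$) and $2$ when $i = \pm 1$, as one reads off from the $\operatorname{Ext}^1$-criterion established in part (1). Both quantities are preserved by any equivalence of abelian categories, so $\T_{\sl(\infty)}(i) \simeq \T_{\sl(\infty)}(j)$ forces $|i| = |j|$, i.e.\ $i = \pm j$.
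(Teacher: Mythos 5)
Your proposal is correct and takes essentially the same route as the paper: the same $\operatorname{Ext}^1$-criterion extracted from the socle data of \cite[Theorem 2.3]{PStyr} (you reach it via Corollary~\ref{multiplicity} and conjugation, while the paper reads it off directly from the injective hull $\tilde V_\lambda$), the same linkage combinatorics on diagonals (which the paper dismisses as a simple exercise and you spell out), the same use of $(\,\cdot\,)_*$ for $i=-j$, the same count of simple injective objects via $p(|i|)$, and the same $\operatorname{Ext}^1$-based refinement to separate $\T_{\sl(\infty)}(0)$ from $\T_{\sl(\infty)}(\pm1)$. No gaps to report.
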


\begin{proof}
\begin{enumerate}
\item The fact that $\tilde{V}_{(\mu^1,\mu^2)}$ is an injective hull of $V_{(\mu^1,\mu^2)}$, together with Theorem 2.3 in \cite{PStyr}, implies that
  $\operatorname{Ext}_{\Tg}^1(V_{(\mu^1,\mu^2)},V_{(\lambda^1,\lambda^2)})\neq 0$
  iff $\mu^1\in(\lambda^1)^+$ and
  $\mu^2\in(\lambda^2)^+$.  More precisely, Theorem 2.3 in \cite{PStyr} computes the multiplicities of the constituents of the socle of $\tilde V_\lambda / V_\lambda$, and a simple module has nonzero $\operatorname{Ext}_{\Tg}^1$ with $V_\lambda$ if and only if it is isomorphic to a submodule of $\tilde V_\lambda / V_\lambda$.  Consider the minimal equivalence relation
  on pairs of partitions for which $(\lambda^1,\lambda^2)$ and $(\mu^1,\mu^2)$ are equivalent whenever $\mu^1\in(\lambda^1)^+$ and $\mu^2\in(\lambda^2)^+$.
  It is a simple exercise to show that then $\lambda=(\lambda^1,\lambda^2)$
  and $\mu=(\mu^1,\mu^2)$ are equivalent if and only if  $|\lambda^1| -  |\lambda^2| = |\mu^1| - | \mu^2|.$ 
The first assertion follows.
\item The functor $( \, \cdot \, )_*$ establishes an equivalence of
  $\T_{\sl(\infty)} (i)$ and $\T_{\sl(\infty)} (-i)$.  To see that
  $\T_{\sl(\infty)} (i)$ and $\T_{\sl(\infty)} (j)$ are inequivalent
  for $i \neq \pm j$, assume without loss of generality that $i>0$, $j
  \geq 0$.  Then the isomorphism classes of simple injective
objects in $\T_{\sl(\infty)} (i)$ are parametrized by the partitions
of $i$, since $\{ V_{(\lambda^1 , 0)} \mid  |\lambda^1| = i \}$
represents the set of isomorphism classes of simple injective objects
in $\T_{\sl(\infty)} (i)$.  As the sets 
$\{ V_{(\lambda^1 , 0)} \mid |\lambda^1| = i \}$ and $\{ V_{(\lambda^1  , 0)} \mid  |\lambda^1| = j\}$ have different cardinalities for $i\neq j$ 
except the case $i=1,j=0$, the assertion follows in other cases. Each of the blocks $\T_{\sl(\infty)} (0)$ and $\T_{\sl(\infty)} (1)$ has a single simple injective module, up to isomorphism. However, $V$ has nontrivial extensions by both $V_{((2),(1))}$ and $V_{((1,1),(1))}$, whereas $\CC$ has a nontrivial extension only by $V_{((1),(1))}$.  This completes the proof.
\end{enumerate}
\end{proof}

Now we proceed to describing the structure of $\mathcal A_\gg$ for
$\gg=\so(\infty)$ and $\sp(\infty)$.  Recall that
$(\mathcal A_\gg)_i^p=\operatorname{Hom}_\gg(T^p,T^{p-2i}).$ and $(\mathcal A_\gg)_0^p=\mathbb C[S_p]$.
Let $S_{p-2}\subset S_p$ denote the stabilizer of
$p$ and $p-1$, and let $S' \subset S_p$ be the subgroup generated by the transposition $(p-1,p)$.

\begin{lemma} We have
\begin{align*}
(\mathcal A_\gg)_1^p &\simeq triv \otimes_{\mathbb C[S']} \mathbb C[S_{p}] &\textrm{for } \gg=\so(\infty) \\
\intertext{and}
(\mathcal A_\gg)_1^p &\simeq sgn \otimes_{\mathbb C[S']} \mathbb C[S_{p}] &\textrm{for } \gg=\sp(\infty).
\end{align*}
In both cases left multiplication by $\mathbb C[S_{p-2}]$ is well defined, as $S'$ centralizes $S_{p-2}$.
\end{lemma}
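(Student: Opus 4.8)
The plan is to mirror the treatment of $\gg = \sl(\infty)$ in Lemma~\ref{structure}. Write $\epsilon = 1$ when $\gg = \so(\infty)$ and $\epsilon = -1$ when $\gg = \sp(\infty)$, so that the bilinear form on $V$ satisfies $\langle v, w\rangle = \epsilon\langle w, v\rangle$, and set $\psi := \psi_{p-1,p}\in(\mathcal A_\gg)_1^p$. First I would note, using Lemma~\ref{contractions}, that $\psi$ generates $(\mathcal A_\gg)_1^p$ as a right $\CC[S_p]$-module: that lemma gives that the $\psi_{i,j}$ generate it as a left $\CC[S_{p-2}]$-module, and every element $\rho\circ\psi_{i,j}$ with $\rho\in S_{p-2}$ is visibly of the form $\psi\circ\sigma$ for a suitable $\sigma\in S_p$ --- namely one that moves the two contracted tensor factors into positions $i,j$ and permutes the remaining $p-2$ factors by $\rho$. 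Since $\langle v_{p-1},v_p\rangle = \epsilon\langle v_p,v_{p-1}\rangle$, one also checks that $\psi\circ(p-1,p) = \epsilon\,\psi$. Hence the surjective right $\CC[S_p]$-module map $\CC[S_p]\to(\mathcal A_\gg)_1^p$, $\sigma\mapsto\psi\circ\sigma$, kills the right ideal $(1-\epsilon(p-1,p))\CC[S_p]$ and therefore descends to a surjection $\Theta$ from $triv\otimes_{\CC[S']}\CC[S_p]$ if $\epsilon = 1$, resp.\ from $sgn\otimes_{\CC[S']}\CC[S_p]$ if $\epsilon = -1$, onto $(\mathcal A_\gg)_1^p$, with $\Theta(1\otimes\sigma) = \psi\circ\sigma$.

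It then remains to check that $\Theta$ is injective, and I would do this by a dimension count. The source of $\Theta$ has dimension $[S_p : S'] = p!/2$. On the other hand the $\binom{p}{2}(p-2)! = p!/2$ maps $\rho\circ\psi_{i,j}$ ($\rho\in S_{p-2}$, $1\le i<j\le p$) span $(\mathcal A_\gg)_1^p$ by Lemma~\ref{contractions}, and they are linearly independent: given a vanishing linear combination I would fix a pair $\{i_0,j_0\}$ and evaluate on a tensor of basis vectors drawn from a hyperbolic (for $\so(\infty)$) resp.\ symplectic (for $\sp(\infty)$) basis of $V$, chosen so that $\langle v_i,v_j\rangle\ne 0$ precisely for $\{i,j\} = \{i_0,j_0\}$ while the remaining $v_m$ are pairwise distinct basis vectors; only the terms with $\{i,j\} = \{i_0,j_0\}$ survive, and they produce distinct basis tensors of $T^{p-2}$, so their coefficients vanish, and running over all pairs $\{i_0,j_0\}$ finishes the argument. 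This is the exact analogue of the freeness computation in the proof of Lemma~\ref{structure}; it is the only place any real computation is needed, and I expect it to present no serious obstacle. It follows that $\dim(\mathcal A_\gg)_1^p = p!/2$ and hence that $\Theta$ is an isomorphism.

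Finally I would verify compatibility with the left $\CC[S_{p-2}]$-module structure. For $\tau\in S_{p-2}$ one has $\tau\circ\psi = \psi\circ\tau$ (both sides contract the last two factors and permute the first $p-2$ by $\tau$), so under $\Theta$ postcomposition by $\tau$ on $(\mathcal A_\gg)_1^p$ corresponds to left multiplication by $\tau\in S_p$ on $\CC[S_p]$. Because $S_{p-2}$ centralizes $S'$ in $S_p$, this left action descends to $triv\otimes_{\CC[S']}\CC[S_p]$, resp.\ $sgn\otimes_{\CC[S']}\CC[S_p]$ --- which is also exactly what makes the right-hand side a well-defined $(\CC[S_{p-2}],\CC[S_p])$-bimodule, as asserted in the last sentence of the statement. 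Thus $\Theta$ is an isomorphism of $(\CC[S_{p-2}],\CC[S_p])$-bimodules, and the two cases differ only through the replacement of $triv$ by $sgn$, i.e.\ of $\epsilon = 1$ by $\epsilon = -1$. It is perhaps worth pointing out that no further, ``Brauer-type'' morphisms of $T^p$ into $T^{p-2}$ pairing two factors of the target occur here, since the putative invariant $\sum_i e_i\otimes e^i$ does not lie in $V\otimes V$; this is already built into Lemma~\ref{contractions}.
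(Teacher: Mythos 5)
Your proof is correct and follows essentially the same route as the paper: the paper's own proof simply observes that $\psi_{p-1,p}$ generates $(\mathcal A_\gg)_1^p$ as a right $\mathbb C[S_p]$-module (via Lemma~\ref{contractions}) and that $\psi_{p-1,p}=\pm\,\psi_{p-1,p}\circ(p-1,p)$ with sign depending on $\gg$, which is exactly your surjection $\Theta$. Your dimension count and the linear-independence check on basis tensors supply the injectivity that the paper leaves implicit (the analogue of the freeness computation in the proof of Lemma~\ref{structure}), and that step, as well as the bimodule compatibility, is carried out correctly.
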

\begin{proof} Lemma~\ref{contractions} implies that the contraction $\psi_{p-1,p}$ generates
  $(\mathcal A_\gg)_1^p$ as a right $\mathbb C[S_p]$-module.  Then the
  statement follows from the relation
$$\psi_{p-1,p}=\pm \psi_{p-1,p}(p,p-1),$$
where the sign is $+$ for  $\gg=\so(\infty)$ and $-$ for
$\gg=\sp(\infty)$.
\end{proof}
\begin{cor}  Let $\gg=\so(\infty)$ or $\sp(\infty)$. Then
$$(\mathcal A_\gg)_1^{p-2}\otimes_{(\mathcal A_\gg)_0^{p-2}} (\mathcal
  A_\gg)_1^{p}\simeq L_\gg\otimes_{\mathbb C[S]}\mathbb C[S_p],$$
where $S \simeq S_2\times S_2$ is the subgroup generated by $(p,p-1)$ and
  $(p-2,p-3)$ and 
  $$L_\gg=
  \begin{cases}
 triv & \textrm{for }\gg=\so(\infty) \\
 sgn\boxtimes sgn & \textrm{for } \gg =\sp(\infty).
 \end{cases}$$
\end{cor}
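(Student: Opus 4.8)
The plan is to obtain the corollary directly from the preceding lemma: substitute the induced-module descriptions of the two tensor factors, and then simplify the resulting iterated tensor product using associativity of $\otimes$ and a standard fact about induction from commuting subgroups. Throughout, write $\epsilon_\gg := triv$ for $\gg = \so(\infty)$ and $\epsilon_\gg := sgn$ for $\gg = \sp(\infty)$, so that the preceding lemma reads $(\mathcal A_\gg)_1^p \simeq \epsilon_\gg \otimes_{\mathbb C[S']}\mathbb C[S_p]$ as a $(\mathbb C[S_{p-2}],\mathbb C[S_p])$-bimodule, where $S' = \langle(p-1,p)\rangle$.

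First I would record the data for the factor $(\mathcal A_\gg)_1^{p-2}$. Applying the preceding lemma with $p$ replaced by $p-2$ — the ambient symmetric group now being the stabilizer $S_{p-2}\subset S_p$ of $p$ and $p-1$, whose last two letters are $p-3$ and $p-2$ — gives $(\mathcal A_\gg)_1^{p-2}\simeq \epsilon_\gg\otimes_{\mathbb C[S'']}\mathbb C[S_{p-2}]$ as a $(\mathbb C[S_{p-4}],\mathbb C[S_{p-2}])$-bimodule, where $S'' := \langle(p-2,p-3)\rangle$. Since the transpositions $(p-1,p)$ and $(p-2,p-3)$ have disjoint support, $S'$ and $S''$ commute, meet only in the identity, and together generate the subgroup $S \simeq S_2\times S_2$ appearing in the statement. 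Next I would pin down the left $\mathbb C[S_{p-2}]$-action on $(\mathcal A_\gg)_1^p$: under the isomorphism sending the contraction $\psi_{p-1,p}$ to $1\otimes 1$, the right $\mathbb C[S_p]$-action is right multiplication on $\mathbb C[S_p]$, and since $\psi_{p-1,p}$ commutes with every $\sigma\in S_{p-2}$ (permuting the uncontracted tensor slots commutes with contracting the last two), the left $\mathbb C[S_{p-2}]$-action corresponds to left multiplication by $S_{p-2}\subseteq S_p$ on the $\mathbb C[S_p]$ factor. This compatibility is exactly what legitimizes the cancellation in the computation below.

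With this in hand, I compute
\begin{align*}
(\mathcal A_\gg)_1^{p-2}\otimes_{(\mathcal A_\gg)_0^{p-2}}(\mathcal A_\gg)_1^p
&\simeq \bigl(\epsilon_\gg\otimes_{\mathbb C[S'']}\mathbb C[S_{p-2}]\bigr)\otimes_{\mathbb C[S_{p-2}]}\bigl(\epsilon_\gg\otimes_{\mathbb C[S']}\mathbb C[S_p]\bigr)\\
&\simeq \epsilon_\gg\otimes_{\mathbb C[S'']}\bigl(\epsilon_\gg\otimes_{\mathbb C[S']}\mathbb C[S_p]\bigr),
\end{align*}
the last step being the canonical isomorphism $\mathbb C[S_{p-2}]\otimes_{\mathbb C[S_{p-2}]}M\simeq M$ combined with associativity of the tensor product. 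Finally, because $S''$ and $S'$ are commuting subgroups of $S_p$ with $S''\cap S' = \{e\}$ and $S''S' = S$, the elementary identity $\alpha\otimes_{\mathbb C[S'']}\bigl(\beta\otimes_{\mathbb C[S']}\mathbb C[S_p]\bigr)\simeq(\alpha\boxtimes\beta)\otimes_{\mathbb C[S]}\mathbb C[S_p]$ for one-dimensional characters $\alpha$ of $S''$ and $\beta$ of $S'$ (proved by writing out generators and relations of both sides) yields $(\epsilon_\gg\boxtimes\epsilon_\gg)\otimes_{\mathbb C[S]}\mathbb C[S_p]$, i.e.\ $triv\otimes_{\mathbb C[S]}\mathbb C[S_p]$ for $\gg=\so(\infty)$ and $(sgn\boxtimes sgn)\otimes_{\mathbb C[S]}\mathbb C[S_p]$ for $\gg=\sp(\infty)$. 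One checks along the way that every isomorphism used respects the bimodule structure, so the result holds as an isomorphism of $(\mathbb C[S_{p-4}],\mathbb C[S_p])$-bimodules, the left structure being transported through the embeddings $\mathbb C[S_{p-4}]\subset\mathbb C[S_{p-2}]\subset\mathbb C[S_p]$.

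There is no genuine obstacle here — the corollary is essentially a formal consequence of the preceding lemma. The only point demanding care is the bookkeeping: correctly identifying which copy of $S_2$ is $S'$ and which is $S''$, and verifying that the left $\mathbb C[S_{p-2}]$-module structure on $(\mathcal A_\gg)_1^p$ is literally left multiplication on $\mathbb C[S_p]$, since it is this fact that makes the middle step of the displayed computation valid.
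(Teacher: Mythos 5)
Your proposal is correct and matches the paper's intent: the paper states this as an immediate corollary of the preceding lemma (analogous to Lemma \ref{structure}\eqref{secondly} in the $\sl(\infty)$ case), and your argument is exactly the straightforward verification — substitute the induced-module description of each factor, cancel $\mathbb C[S_{p-2}]\otimes_{\mathbb C[S_{p-2}]}(\,\cdot\,)$, and collapse the two commuting inductions from $S'=\langle(p-1,p)\rangle$ and $\langle(p-2,p-3)\rangle$ into a single induction from $S\simeq S_2\times S_2$. Your check that the left $\mathbb C[S_{p-2}]$-action corresponds to left multiplication on $\mathbb C[S_p]$ (via $\sigma\circ\psi_{p-1,p}=\psi_{p-1,p}\circ\sigma$) is precisely the point that makes the cancellation legitimate, so nothing is missing.
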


To describe $R$, write $R=\bigoplus_p R^{p}$, where 
$R^p\subset (\mathcal A_\gg)_1^{p-2}\otimes_{(\mathcal A_\gg)_0^{p-2}} (\mathcal  A_\gg)_1^{p}$.

We will need the following decompositions of $S_4$-modules:
\begin{align}\label{s4}
triv\otimes_{\mathbb C[S]}\mathbb C[S_4] & =X_{(2,1,1)}\oplus X_{(2,2)}\oplus X_{(4)}, \\
(sgn\boxtimes sgn)\otimes_{\mathbb C[S]}\mathbb C[S_4]& =X_{(3,1)}\oplus X_{(2,2)}\oplus X_{(1,1,1,1)}.
\end{align}

\begin{lemma} Let $S'' \subset S_p$ be the subgroup isomorphic to $S_4$ that fixes $1$, $2$,\dots, $p-4$. Then

\begin{align*}
R^p & \simeq  X_{(2,1,1)}\otimes_{\mathbb C[S'']}\mathbb C[S_p] & \textrm{for } \gg=\so(\infty), \\
\intertext{and}
R^p & \simeq  X_{(3,1)}\otimes_{\mathbb C[S'']}\mathbb C[S_p] &\textrm{for } \gg=\sp(\infty).
\end{align*}
\end{lemma}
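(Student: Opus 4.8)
The plan is to realize $R^p$ as the kernel of the degree-$2$ multiplication map of the quadratic algebra $\mathcal A^r_\gg$. Put $N^p:=(\mathcal A_\gg)_1^{p-2}\otimes_{(\mathcal A_\gg)_0^{p-2}}(\mathcal A_\gg)_1^p$ and let $\mu\colon N^p\to(\mathcal A_\gg)_2^p=\operatorname{Hom}_\gg(T^p,T^{p-4})$ be the composition map. Since $\mathcal A^r_\gg$ is Koszul, hence quadratic (\cite{BGS}), $\mu$ is surjective and $R^p=\ker\mu$. By the preceding corollary $N^p\simeq L_\gg\otimes_{\mathbb C[S]}\mathbb C[S_p]$, and since $S\subset S''$ transitivity of induction gives $N^p\simeq(L_\gg\otimes_{\mathbb C[S]}\mathbb C[S''])\otimes_{\mathbb C[S'']}\mathbb C[S_p]$, whose inner factor is computed by \eqref{s4}, so that
$$
N^p\simeq\begin{cases}(X_{(2,1,1)}\oplus X_{(2,2)}\oplus X_{(4)})\otimes_{\mathbb C[S'']}\mathbb C[S_p], & \gg=\so(\infty),\\ (X_{(3,1)}\oplus X_{(2,2)}\oplus X_{(1,1,1,1)})\otimes_{\mathbb C[S'']}\mathbb C[S_p], & \gg=\sp(\infty);\end{cases}
$$
in particular $\dim N^p=p!/4$. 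Moreover $N^p$ is cyclic over $\mathbb C[S_p]$, generated by the image $\bar 1$ of $1_{L_\gg}\otimes 1$, and by Lemma~\ref{contractions} together with the preceding corollary $\mu(\bar 1)$ is the double contraction $c:=\psi_{p-3,p-2}\circ\psi_{p-1,p}\in\operatorname{Hom}_\gg(T^p,T^{p-4})$; consequently $(\mathcal A_\gg)_2^p=c\cdot\mathbb C[S_p]$.

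The crux is to identify $(\mathcal A_\gg)_2^p$. A direct computation, using the (anti)symmetry of the bilinear form (i.e.\ the relation $\psi_{i,j}\circ(i,j)=\pm\psi_{i,j}$, the sign being $+$ for $\so(\infty)$ and $-$ for $\sp(\infty)$), shows that $c\circ g=\chi(g)\,c$ for every $g$ in the subgroup $D:=\langle(p-1,p),(p-3,p-2),\tau\rangle$ of $S''$ of order $8$, where $\tau:=(p-3,p-1)(p-2,p)$ interchanges the two contracted pairs and $\chi$ is the linear character of $D$ equal to the trivial character for $\so(\infty)$ and to the restriction to $D$ of the sign character of $S''$ for $\sp(\infty)$; the essential point is that $c$ acquires an extra invariance under $\tau$, which does not belong to $S$. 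Hence $(\mathcal A_\gg)_2^p$ is a quotient of $\chi\otimes_{\mathbb C[D]}\mathbb C[S_p]$, a module of dimension $p!/8$. On the other hand $\dim\operatorname{Hom}_\gg(T^p,T^{p-4})=p!/8$, since by \cite{PStyr} this $\operatorname{Hom}$-space has a basis consisting of the contraction diagrams (a choice of two disjoint pairs of indices among the $p$ source indices together with a bijection of the remaining $p-4$ indices). Therefore $(\mathcal A_\gg)_2^p\simeq\chi\otimes_{\mathbb C[D]}\mathbb C[S_p]\simeq(\chi\otimes_{\mathbb C[D]}\mathbb C[S''])\otimes_{\mathbb C[S'']}\mathbb C[S_p]$, and a Frobenius reciprocity computation identifies the $3$-dimensional $S''\simeq S_4$-module $\chi\otimes_{\mathbb C[D]}\mathbb C[S'']$ as $X_{(4)}\oplus X_{(2,2)}$ for $\so(\infty)$, and — tensoring with $sgn_{S''}$ — as $X_{(1,1,1,1)}\oplus X_{(2,2)}$ for $\sp(\infty)$.

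Now from the exact sequence $0\to R^p\to N^p\xrightarrow{\mu}(\mathcal A_\gg)_2^p\to 0$ and semisimplicity of $\mathbb C[S_p]$ one gets $[R^p]=[N^p]-[(\mathcal A_\gg)_2^p]$ in the representation ring of $S_p$; subtracting the two descriptions above, the summands induced from $X_{(2,2)}$ and $X_{(4)}$ (resp.\ $X_{(1,1,1,1)}$) cancel, leaving
$$
[R^p]=[X_{(2,1,1)}\otimes_{\mathbb C[S'']}\mathbb C[S_p]]\ \ (\gg=\so(\infty)),\qquad [R^p]=[X_{(3,1)}\otimes_{\mathbb C[S'']}\mathbb C[S_p]]\ \ (\gg=\sp(\infty)),
$$
which gives the stated isomorphisms of right $\mathbb C[S_p]$-modules. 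These are in fact isomorphisms of $(\mathbb C[S_{p-4}],\mathbb C[S_p])$-bimodules, because $\mu$ and all the induction isomorphisms used above are maps of such bimodules, the left action of $S_{p-4}$ being by permutation of the first $p-4$ tensor factors, which commutes with $S''$.

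The step requiring the most care is the determination of $(\mathcal A_\gg)_2^p$: one must both exhibit the extra $\tau$-symmetry of the double contraction $c$ and invoke the dimension $\dim\operatorname{Hom}_\gg(T^p,T^{p-4})=p!/8$ from \cite{PStyr}, for otherwise the surjection $N^p\twoheadrightarrow(\mathcal A_\gg)_2^p$ could a priori have a strictly larger kernel and the conclusion would fail. The remaining ingredients — transitivity of induction, \eqref{s4}, and the elementary Frobenius and character computations over $S_4$ — are routine, exactly parallel to the treatment of the $\sl(\infty)$ case in Lemma~\ref{bardef}.
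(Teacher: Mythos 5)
Your proof is correct, but it takes a genuinely different route from the paper's. The paper argues inside $\mathbb C[S'']$ with explicit Young projectors: it computes $\psi_{p-3,p-2}\psi_{p-1,p}\mathbb Y_{(2,1,1)}=0$ while $\psi_{p-3,p-2}\psi_{p-1,p}\mathbb Y_{(2,2)}$ and $\psi_{p-3,p-2}\psi_{p-1,p}\mathbb Y_{(4)}$ are nonzero, and reads off $R^p$ from the decomposition \eqref{s4} of $(\mathcal A_\gg)_1^{p-2}\otimes_{(\mathcal A_\gg)_0^{p-2}}(\mathcal A_\gg)_1^{p}$ into the three projector components. You instead identify the image of the multiplication map: the extra symmetry $c\circ\tau=c$ of the double contraction under the swap $\tau$ of the two contracted pairs shows $(\mathcal A_\gg)_2^p$ is a quotient of the module induced from the character $\chi$ of the order-$8$ stabilizer $D$, and the diagram count $\dim\operatorname{Hom}_\gg(T^p,T^{p-4})=p!/8$ forces equality; cancellation in the semisimple category of $\mathbb C[S_p]$-modules (indeed of bimodules) then yields $R^p$. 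Both arguments rest on the same two symmetry facts about $c$, and both ultimately need the linear independence of the double-contraction maps $c\sigma$ (equivalently $\dim(\mathcal A_\gg)_2^p=p!/8$): you make this input explicit (attributing it to \cite{PStyr}; one could also prove it by an evaluation argument as in Lemma~\ref{structure}), whereas the paper leaves the corresponding injectivity on the $\mathbb Y_{(2,2)}$- and $\mathbb Y_{(4)}$-components implicit in ``the statement follows.'' What the paper's computation buys, and yours does not directly, is the concrete realization of $R^p$ as the submodule $\mathbb Y_{(2,1,1)}\mathbb C[S_p]$ (resp.\ its sign twist for $\sp(\infty)$), i.e.\ the actual relations; this explicit form, rather than the abstract isomorphism type you establish, is what is used next to see that the automorphism $s\mapsto sgn(s)s$ carries the relations for $\so(\infty)$ to those for $\sp(\infty)$. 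What your route buys is a cleaner conceptual bookkeeping (induction from $D$, Frobenius reciprocity, Krull--Schmidt cancellation) and an explicit verification of the dimension step that the paper glosses over.
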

\begin{proof} Let us deal with the case of  $\so(\infty)$. 
We consider the following Young projectors  in $S'' \simeq S_4$
$$\mathbb Y_{(2,1,1)}=(1+(p-1,p))(1-(p,p-2)-(p,p-3)-(p-2,p-3)+(p,p-2,p-3)+(p,p-3,p-2)),$$
$$\mathbb Y_{(2,2)}=(1+(p,p-1))(1+(p-2,p-3))(1-(p-2,p))(1-(p-1,p-3)),$$
and $$\mathbb Y_{(4)}=\sum_{s\in S''}s.$$
By Equation \eqref{s4} we have
$$R^p\subset (\mathcal A_\gg)_1^{p-2}\otimes_{(\mathcal
  A_\gg)_0^{p-2}} (\mathcal  A_\gg)_1^{p}=\mathbb Y_{(2,1,1)}\mathbb C[S_p]\oplus 
\mathbb Y_{(2,2)}\mathbb C[S_p]\oplus \mathbb Y_{(4)}\mathbb C[S_p].$$
By direct inspection one can check that
\begin{align*}
\psi_{p-3,p-2}\psi_{p-1,p}\mathbb Y_{(2,1,1)}&=0,\\
\psi_{p-3,p-2}\psi_{p-1,p}\mathbb Y_{(2,2)}&=2\psi_{p-3,p-2}\psi_{p-1,p}-2\psi_{p-3,p}\psi_{p-1,p-2},\\
\psi_{p-3,p-2}\psi_{p-1,p}\mathbb Y_{(4)}&=4\psi_{p-3,p-2}\psi_{p-1,p}.
\end{align*}
The statement follows for $\so(\infty)$.

We leave the case of $\sp(\infty)$ to the reader. 
\end{proof}

\begin{cor} $\mathcal A_{\sp(\infty)}\simeq \mathcal A_{\so(\infty)}$.
\end{cor}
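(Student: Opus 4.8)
The plan is to produce an explicit isomorphism by twisting everything by the sign character, in complete parallel with the self-duality argument of Corollary~\ref{selfduality}. Since $\mathcal A_{\so(\infty)}$ and $\mathcal A_{\sp(\infty)}$ are Koszul by Theorem~\ref{koszulity}, they are quadratic, so each is determined by its degree-zero part, its degree-one bimodule, and its space of quadratic relations, all of which have been described above; and the degree-zero parts literally coincide, $(\mathcal A_{\so(\infty)})_0=(\mathcal A_{\sp(\infty)})_0=\bigoplus_p\CC[S_p]$. First I would introduce the algebra automorphism $\sigma$ of $\bigoplus_p\CC[S_p]$ acting on each block $\CC[S_p]$ by $\sigma(g)=sgn(g)\,g$ for $g\in S_p$; since $sgn$ is multiplicative this is an automorphism, it preserves each block, and hence also the truncations $\mathcal A^r_\gg$. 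The one general fact to record is that twisting a right $\CC[S_n]$-module by $\sigma$ is the same as tensoring it with the sign representation, so for any subgroup $K\subseteq S_n$ and any $\CC[K]$-module $\chi$, the automorphism $\sigma$ carries $\chi\otimes_{\CC[K]}\CC[S_n]$ to $(\chi\otimes sgn|_K)\otimes_{\CC[K]}\CC[S_n]$.

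Next I would check that $\sigma$ matches the degree-one data. Taking $K=S'\cong S_2$ and using $triv\otimes sgn|_{S'}=sgn$, one gets that $\sigma$ restricts to an isomorphism $(\mathcal A_{\so(\infty)})_1^p\to(\mathcal A_{\sp(\infty)})_1^p$; since $\sigma$ is an algebra automorphism preserving the subalgebra $\CC[S_{p-2}]\subset\CC[S_p]$, this isomorphism intertwines both the left $\CC[S_{p-2}]$- and right $\CC[S_p]$-module structures through $\sigma$. Therefore the pair $(\sigma,\sigma)$ extends to a graded algebra isomorphism $\Phi$ between the tensor algebras $T_{(\mathcal A_{\so(\infty)})_0}\big((\mathcal A_{\so(\infty)})_1\big)$ and $T_{(\mathcal A_{\sp(\infty)})_0}\big((\mathcal A_{\sp(\infty)})_1\big)$, and in degree two $\Phi$ is again induced by $\sigma$.

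The decisive step is to verify that $\Phi$ sends relations onto relations, i.e.\ $\Phi(R^p_{\so(\infty)})=R^p_{\sp(\infty)}$ for every $p$. Taking $K=S\cong S_2\times S_2$ and using $sgn|_S=sgn\boxtimes sgn$, one sees that $\sigma$ identifies the degree-two component $triv\otimes_{\CC[S]}\CC[S_p]$ of $\mathcal A_{\so(\infty)}$ with the degree-two component $(sgn\boxtimes sgn)\otimes_{\CC[S]}\CC[S_p]$ of $\mathcal A_{\sp(\infty)}$; under this identification $\sigma$ sends each simple summand $X_\mu\otimes_{\CC[S'']}\CC[S_p]$ occurring (see \eqref{s4}) to $X_{\mu'}\otimes_{\CC[S'']}\CC[S_p]$, because twisting a Specht module of $S''\cong S_4$ by the sign representation conjugates its partition. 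Since $(2,1,1)'=(3,1)$, the summand $X_{(2,1,1)}\otimes_{\CC[S'']}\CC[S_p]$ — which is $R^p_{\so(\infty)}$ by the explicit formula — is carried to $X_{(3,1)}\otimes_{\CC[S'']}\CC[S_p]=R^p_{\sp(\infty)}$. Hence $\Phi$ descends to an isomorphism $\mathcal A_{\so(\infty)}\simeq\mathcal A_{\sp(\infty)}$ (and likewise $\mathcal A^r_{\so(\infty)}\simeq\mathcal A^r_{\sp(\infty)}$ for each $r$).

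I do not expect a genuine obstacle: the whole computation collapses to the two small identities $triv\otimes sgn=sgn$ for $S_2$ and $X_{(2,1,1)}\otimes sgn=X_{(3,1)}$ for $S_4$, both already implicit in the structure results of this section. The only point requiring care is that $\sigma$ must simultaneously respect the left $\CC[S_{p-2}]$-action, the right $\CC[S_p]$-action, and the identifications of the graded pieces with induced modules; but this is automatic, precisely because $\sigma$ is an honest algebra automorphism of $\bigoplus_p\CC[S_p]$ restricting compatibly to all the relevant group subalgebras.
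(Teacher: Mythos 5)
Your proposal is correct and follows essentially the same route as the paper: the paper's proof is precisely the sign-twist automorphism $\sigma(s)=sgn(s)s$ of $\bigoplus_p\CC[S_p]$, applied to the quadratic presentations established earlier in the section, exactly as in the proof of Corollary~\ref{selfduality}. Your two verifications ($triv\otimes sgn|_{S'}=sgn$ in degree one and $X_{(2,1,1)}\otimes sgn\simeq X_{(3,1)}$ for the relation spaces) are just the details the paper leaves implicit.
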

\begin{proof} We use the automorphism $\sigma$ of $\mathbb C[S_p]$ which sends $s$
  to $sgn(s)s$.
\end{proof}

\begin{cor}\label{isomosp}
The categories $\T_{o(\infty)}$ and $\T_{sp(\infty)}$ are equivalent.
\end{cor}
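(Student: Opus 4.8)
The plan is to deduce Corollary~\ref{isomosp} directly from the ring isomorphism $\mathcal{A}_{\sp(\infty)} \simeq \mathcal{A}_{\so(\infty)}$ established in the preceding corollary, by transporting it through the antiequivalence of Corollary~\ref{bigequiv}. First I would recall that Corollary~\ref{bigequiv} gives antiequivalences $\T_{\so(\infty)} \simeq \mathcal{A}_{\so(\infty)}\text{-mof}$ and $\T_{\sp(\infty)} \simeq \mathcal{A}_{\sp(\infty)}\text{-mof}$. An isomorphism of rings $\Phi : \mathcal{A}_{\sp(\infty)} \xrightarrow{\sim} \mathcal{A}_{\so(\infty)}$ induces an equivalence of module categories $\Phi^* : \mathcal{A}_{\so(\infty)}\text{-mof} \xrightarrow{\sim} \mathcal{A}_{\sp(\infty)}\text{-mof}$ by restriction of scalars. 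Composing $\T_{\so(\infty)} \simeq \mathcal{A}_{\so(\infty)}\text{-mof} \xrightarrow{\Phi^*} \mathcal{A}_{\sp(\infty)}\text{-mof} \simeq \T_{\sp(\infty)}$ — where the last arrow is the inverse of the antiequivalence for $\sp(\infty)$ — yields an equivalence (not merely an antiequivalence, since two antiequivalences compose to an equivalence) $\T_{\so(\infty)} \simeq \T_{\sp(\infty)}$.

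The one point that needs a word of care is that the isomorphism $\Phi$ constructed in the proof of the previous corollary is the one induced by the sign automorphism $\sigma$ on each $\mathbb{C}[S_p]$, and I should check it is compatible with the local unitarity / grading structure so that it indeed identifies $\mathcal{A}_{\sp(\infty)}\text{-mof}$ with $\mathcal{A}_{\so(\infty)}\text{-mof}$ rather than some larger module category. This is immediate: $\sigma$ preserves the $\mathbb{Z}_{\geq 0}$-grading (it fixes $(\mathcal{A}_\gg)_1$ pointwise and acts on $(\mathcal{A}_\gg)_0 = \bigoplus_p \mathbb{C}[S_p]$ by an automorphism of each summand), hence it carries the graded ideals $\bar{\mathcal{A}^r_\gg}$ to each other and therefore respects the decompositions $\mathcal{A}_\gg\text{-mof} = \limarr \mathcal{A}^r_\gg\text{-mof}$ on both sides. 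So $\Phi^*$ restricts to an equivalence of the full subcategories of locally unitary finite-dimensional modules.

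The main obstacle here is essentially bookkeeping rather than mathematics: one must make sure that all four functors in the composition are genuinely defined on the "mof" categories (finite-dimensional, locally unitary) and not just on all modules, and that the antiequivalences of Corollary~\ref{bigequiv} are being used in the correct direction so the two contravariances cancel. Given that, the proof is a single sentence. I would write:

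\begin{proof}
By Corollary~\ref{bigequiv} the categories $\T_{\so(\infty)}$ and $\T_{\sp(\infty)}$ are antiequivalent to $\mathcal{A}_{\so(\infty)}$-$\mathrm{mof}$ and $\mathcal{A}_{\sp(\infty)}$-$\mathrm{mof}$, respectively.  The ring isomorphism $\mathcal{A}_{\sp(\infty)} \simeq \mathcal{A}_{\so(\infty)}$ of the previous corollary is induced by the automorphism $\sigma$, which preserves the $\ZZ_{\geq 0}$-grading; hence it carries $\bar{\mathcal A^r_\gg}$ to $\bar{\mathcal A^r_\gg}$ for every $r$ and therefore induces, via restriction of scalars, an equivalence $\mathcal{A}_{\so(\infty)}$-$\mathrm{mof} \simeq \mathcal{A}_{\sp(\infty)}$-$\mathrm{mof}$.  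Composing the two antiequivalences with this equivalence gives an equivalence $\T_{\so(\infty)} \simeq \T_{\sp(\infty)}$.
\end{proof}
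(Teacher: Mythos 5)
Your argument is exactly the paper's intended derivation: the paper states this corollary as an immediate consequence of the isomorphism $\mathcal{A}_{\so(\infty)}\simeq\mathcal{A}_{\sp(\infty)}$ via the antiequivalences of Corollary~\ref{bigequiv}, with the two contravariances cancelling just as you describe. Your extra check that $\sigma$ respects the grading and the ideals $\bar{\mathcal A^r_\gg}$, so that restriction of scalars preserves the locally unitary finite-dimensional subcategories, is a correct (and welcome) elaboration of the detail the paper leaves implicit.
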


\begin{prop} $\T_{o(\infty)}$ and $\T_{sp(\infty)}$ have two
  inequivalent blocks  $\T_{\gg}^{ev}$ and  $\T_{\gg}^{odd}$
  generated by all $V_\lambda$ with $|\lambda|$ even and odd, respectively.
\end{prop}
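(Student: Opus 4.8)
The plan is to show two things: first that the category $\T_\gg$ (for $\gg = \so(\infty)$ or $\sp(\infty)$) decomposes as a direct sum of the two full subcategories $\T_\gg^{ev}$ and $\T_\gg^{odd}$ spanned by the simple objects $V_\lambda$ with $|\lambda|$ even, respectively odd; and second that these two subcategories are inequivalent. For the decomposition, I would invoke Proposition~\ref{ext}: since $\operatorname{Ext}^1_{\Tg}(V_\lambda, V_\mu) \neq 0$ forces $|\mu| - |\lambda| = 2$, any two simple objects linked by a nontrivial extension have $|\lambda|$ of the same parity. Hence no indecomposable object of $\T_\gg$ can have simple constituents of both parities, and the equivalence relation on simple objects generated by ``nonzero $\operatorname{Ext}^1$'' has at most two classes. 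To see it has exactly two (i.e.\ that $\T_\gg^{ev}$ and $\T_\gg^{odd}$ are each indecomposable as categories, hence genuine blocks), I would use Lemma~\ref{les}: the short exact sequence $0 \to V^+_\lambda \to V \otimes V_\lambda \to V^-_\lambda \to 0$ shows that $V_\eta$ for $\eta \in \lambda^+$ has a nonzero $\operatorname{Ext}^1$ with $V_\mu$ for $\mu \in \lambda^-$ (via the connecting map, provided $V^-_\lambda \neq 0$), so all $V_\lambda$ with $|\lambda|$ fixed mod $2$ and within the reachable range are connected; a short combinatorial argument with the Pieri rule shows one can get from any partition to any other of the same size parity by a chain of $\pm$ moves through partitions, so $\T_\gg^{ev}$ and $\T_\gg^{odd}$ are indeed single blocks.

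For the inequivalence of $\T_\gg^{ev}$ and $\T_\gg^{odd}$, I would compare a discrete invariant of the two blocks, as was done for $\T_{\sl(\infty)}$ in the preceding corollary. The cleanest choice is the number of isomorphism classes of simple injective objects: $\tilde V_\lambda = V_\lambda$ (i.e.\ $\tilde V_\lambda$ is simple) precisely when $V \otimes V_{\lambda'}$ has no constituent $V_\lambda$ obtainable by a contraction, which by \cite{PStyr} happens exactly for a controlled finite list. In fact $V_{(0)} = \CC$ is the unique simple injective in $\T_\gg^{ev}$ (the trivial module has no contraction source), while $V = V_{(1)}$ is the unique simple injective in $\T_\gg^{odd}$; so the count alone does not separate them, and one needs a finer invariant, exactly as in the $\sl(\infty)$ case. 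I would therefore look at the extensions of the unique simple injective: in $\T_\gg^{ev}$ the module $\CC$ has $\operatorname{Ext}^1(\CC, V_\mu) \neq 0$ only for $|\mu| = 2$, and by Lemma~\ref{les} (with $\lambda = 0$, so $V \otimes V = V \otimes V_{(1)}$ has socle $V_{(2)} \oplus V_{(1,1)}$ for $\so$, resp.\ the symmetric/antisymmetric analogue for $\sp$) one reads off the socle of $\tilde\CC / \CC$; for $\gg = \so(\infty)$ this is $V_{(1,1)}$ only (since $S^2 V$ contains the invariant), giving $\dim \operatorname{Ext}^1(\CC, V_{(1,1)}) = 1$ and $\operatorname{Ext}^1(\CC, V_{(2)}) = 0$, whereas for the unique simple injective $V$ in $\T^{odd}_\gg$ one computes $\operatorname{Ext}^1(V, V_\mu)$ for the relevant $|\mu| = 3$ partitions and finds a different pattern of dimensions. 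Since an equivalence of blocks must match the simple injective object and preserve all $\operatorname{Ext}^1$ dimensions between simples, these numerical discrepancies rule out an equivalence $\T_\gg^{ev} \simeq \T_\gg^{odd}$.

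The main obstacle I anticipate is the inequivalence step rather than the decomposition: the decomposition is essentially immediate from Proposition~\ref{ext}, but producing a robust invariant separating $\T_\gg^{ev}$ from $\T_\gg^{odd}$ requires care because both blocks have a unique simple injective, so one must descend to the structure of the indecomposable injectives $\tilde V_\lambda$ near the ``bottom'' of each block. The computation of $\soc(\tilde V_{(1,1)})$, $\soc^1(\tilde V_{(1,1)})$ for $\so$ and the analogous low-degree injectives for $\sp$, using \cite[Theorems 3.2, 4.2, 3.3, 4.3]{PStyr}, is the technical heart; alternatively one can phrase everything on the algebra side via $\mathcal A_\gg$, using that $\operatorname{Ext}^i_{\Tg}(V_\lambda, V_\mu) = \operatorname{Ext}^i_{\mathcal A_\gg}(X_\mu, X_\lambda)$ (Corollary~\ref{bigequiv}) and the explicit quadratic-dual description of $\mathcal A_\gg$ from the preceding lemmas, which makes the graded dimension count bookkeeping more transparent. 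Either way, once the low-degree $\operatorname{Ext}^1$ dimensions from the two blocks are tabulated and seen to differ, the proof concludes.
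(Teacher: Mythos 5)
There is a genuine gap, and it sits in the inequivalence half of your argument. You assert that each block has a \emph{unique} simple injective object ($\CC$ in $\T^{ev}_\gg$, $V$ in $\T^{odd}_\gg$) and then compare the $\operatorname{Ext}^1$-pattern of that unique object. The uniqueness claim is false: for $\gg=\so(\infty)$ the simple injectives of $\Tg$ are exactly the exterior powers $\Lambda^s(V)=V_{(1,\ldots,1)}$ for all $s\geq 0$ (for $\sp(\infty)$, the symmetric powers), so each block contains infinitely many simple injectives. Hence ``an equivalence must match the simple injective object'' has no force -- an equivalence need only permute the set of simple injectives -- and to rule out an equivalence you must control the $\operatorname{Ext}^1$-pattern of \emph{every} simple injective in one of the blocks. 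That is exactly what the paper does: every simple injective $\Lambda^s(V)$ with $s\geq 1$ has nontrivial extensions with precisely two simples, $V_{(3,1,\ldots,1)}$ and $V_{(2,1,\ldots,1,1)}$, whereas $\T^{ev}_\gg$ contains the simple injective $\CC$, which has a nontrivial extension with only one simple, namely $V_{(2)}$. Your low-degree computation is moreover wrong for $\so(\infty)$: $\tilde V_0=\CC$ is itself injective (so ``$\soc(\tilde\CC/\CC)$'' is vacuous), the contraction gives the nonsplit sequence $0\to V_{(2)}\to S^2(V)\to\CC\to 0$, so $\operatorname{Ext}^1_{\Tg}(\CC,V_{(2)})\neq 0$, while $\Lambda^2(V)$ is simple injective and therefore $\operatorname{Ext}^1_{\Tg}(\CC,V_{(1,1)})=0$ -- the opposite of what you claim (your pattern is the $\sp(\infty)$ one). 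Note also that the paper first reduces to $\gg=\so(\infty)$ via the isomorphism $\mathcal{A}_{\so(\infty)}\simeq\mathcal{A}_{\sp(\infty)}$, so only one case needs to be computed.

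In the decomposition half, parity preservation via Proposition~\ref{ext} is fine, but your linking step is not: it is false that the sequence of Lemma~\ref{les} forces $\operatorname{Ext}^1_{\Tg}$ to be nonzero between $V_\eta$ and $V_\mu$ for \emph{every} $\eta\in\lambda^+$, $\mu\in\lambda^-$. For $\gg=\so(\infty)$ and $\lambda=(1)$ one has $V\otimes V=S^2(V)\oplus\Lambda^2(V)$, and there is no nontrivial extension between $V_{(1,1)}$ and $\CC$; only the pair $(V_{(2)},\CC)$ is linked. The correct adjacency, which the paper quotes from \cite{PStyr}, is that nonvanishing of $\operatorname{Ext}^1_{\Tg}$ between $V_\lambda$ and $V_\mu$ corresponds to $\mu\in\lambda^{++}$, i.e.\ to the Pieri rule for tensoring with $S^2(V)$; one then checks that this relation has exactly two equivalence classes, given by the parity of $|\lambda|$. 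So the conclusion of your decomposition step is correct, but the chain argument must run through the $\lambda^{++}$ combinatorics rather than through all pairs in $\lambda^+\times\lambda^-$.
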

\begin{proof} Due to the previous corollary it suffices to consider the
  case $\gg=\so(\infty)$. As follows from \cite{PStyr},
  $\operatorname{Ext}_{\Tg}^1(V_\mu,V_\lambda)\neq 0$ if and only if $\mu \in \lambda^{++}$, 
  where
\begin{align*}
\lambda^{++} := \{ \textrm{partitions } \lambda' \mid & \lambda_i \leq \lambda'_i \textrm{ for all }i  \textrm{, }
  |\lambda'| = |\lambda| + 2  \textrm{, } \\ 
 & \lambda'_j \neq \lambda_j \textrm{ and } \lambda'_k \neq \lambda_k \textrm{ for } j \neq k \textrm{ implies } \lambda_j \neq \lambda_k \}.
\end{align*} 
Note that the partitions in $ \lambda^{++}$ are those which arise from $\lambda$ via the Pieri rule for tensoring with $S^2(V)$.
Consider the minimal equivalence relation on
  partitions for which $\lambda$ and $\mu$ are equivalent whenever $\mu \in \lambda^{++}$. One can check that there are exactly two equivalence
  classes which are determined by the parity of $|\lambda|$. 

To show that $\T_{\gg}^{ev}$ and  $\T_{\gg}^{odd}$ are not
equivalent observe that  all simple injective modules in $\Tg$ correspond to
partitions $\mu$ with $\mu_1=\cdots=\mu_s=1$, or equivalently are isomorphic
to the exterior powers $\Lambda^s(V)$ of the standard module. If $s\geq 1$
then $\Lambda^s(V)$ has nontrivial extensions by two non-isomorphic simple modules, namely
$V_{(3,1, \ldots, 1)}$ and $V_{(2,1,1, \ldots, 1)}$.  The trivial module on the other hand has a nontrivial extension by only $S^2(V) = V_{(2)}$.  Therefore $\T_{\gg}^{ev}$ contains a simple injective module admitting a nontrivial extension with only one simple module, whereas $\T_{\gg}^{odd}$ does not contain such a simple injective module.
\end{proof}

\end{document}